\newtheorem{prop}{Proposition}
\newtheorem{thm}{Theorem}
\newtheorem{cor}{Corollary}
\theoremstyle{definition}
\newtheorem{defn}{Definition}
\newtheorem{example}{Example}
\newtheorem{remark}{Remark}
\newcommand\A{{\mathbb A}}
\newcommand\N{{\mathbb N}}
\newcommand{\bR}{{\breve{\text{{\bf R}}}}}
\newcommand{\om}{{\varpi}}
\newcommand{\vp}{{\pi}}
\newcommand\Z{{\mathbb Z}}
\newcommand\bW{{\mathbb W}}
\newcommand\AG{{\mathfrak G}}
\newcommand\BG{{\mathfrak G}^B}
\newcommand\CG{{\mathfrak G}^C}
\newcommand\DG{{\mathfrak G}^D}
\newcommand{\be}{\beta}
\newcommand\la{\lambda}
\newcommand\s{{\sigma}}
\newcommand\ssm{\smallsetminus}
\newcommand\di{{\circ}}
\newcommand\eqto{\stackrel{\lower1.5pt\hbox{$\scriptstyle\sim\,$}}\to}
\newcommand\ov{\overline}
\newcommand\wt{\widetilde}
\DeclareMathOperator{\type}{\mathrm{type}}
\newcommand{\ignore}[1]{}
\begin{document}

\title[Tableau formulas for skew Grothendieck polynomials]
{Tableau formulas for skew Grothendieck polynomials}

\date{January 26, 2024}

\author{Harry~Tamvakis} \address{University of Maryland, Department of
Mathematics, William E. Kirwan Hall, 4176 Campus Drive, 
College Park, MD 20742, USA}
\email{harryt@umd.edu}

\subjclass[2010]{Primary 05E05; Secondary 05E14, 14N15}

\keywords{Grothendieck polynomials, classical Lie groups, skew Weyl group elements, idCoxeter algebra, set-valued tableaux, flag manifolds, equivariant $K$-theory, Schubert calculus}

\begin{abstract}
An element of a Weyl group of classical type is skew if it is the left
factor in a reduced factorization of a Grassmannian element. The skew
Grothendieck polynomials are those which are indexed by skew elements
of the Weyl group. We define set-valued tableaux which are fillings of
the associated skew Young diagrams and use them to prove tableau
formulas for the skew double Grothendieck polynomials in all four
classical Lie types. We deduce tableau formulas for the Grassmannian
Grothendieck polynomials and the $K$-theoretic analogues of the
(double mixed) skew Stanley functions in the respective Lie types.
\end{abstract}

\maketitle

\setcounter{section}{-1}

\section{Introduction}

The double Grothendieck polynomials of Lascoux and Sch\"utzenberger
\cite{LS} and Kirillov and Naruse \cite{KN} represent the (stable)
Schubert classes in the equivariant $K$-theory of complete flag
manifolds, in each of the four classical Lie types. When the indexing
Weyl group element is skew, in the sense of \cite{T1, T3}, we
call these polynomials {\em skew Grothendieck polynomials}. The goal
of this article is to prove tableau formulas for the skew Grothendieck
polynomials, building on our earlier work \cite{T6}, which dealt with
the skew Schubert polynomials.

Each skew signed permutation is associated with a pair of (typed,
$k$-strict) partitions $\la\supset \mu$. We introduce {\em set-valued
  tableaux} on the skew Young diagram $\la/\mu$ by extending Buch's
definition \cite{B} in type A to types B, C, and D, in a way which is
natural from a Lie-theoretic point of view. Our combinatorial formulas
for Grothendieck polynomials are expressed as sums over set-valued
tableaux on this skew shape. The main results are the first such
theorems for symplectic and orthogonal Grothendieck polynomials, even
in the single case.

The skew elements of the symmetric group $S_n$ are the $321$-avoiding
or fully commutative permutations studied in \cite{BJS, St}. A
set-valued tableau formula for their double Grothendieck polynomials
was proved by Matsumura \cite{M2}, following earlier results in
\cite{ACT} and \cite{M1} for the single polynomials. As in \cite{T6},
the theorems of this paper are new even in type A, and provide an
alternative to the main result of \cite{M2}, which has a direct
analogue in types B, C, and D. We note that the (type A) skew stable
Grothendieck polynomials introduced and studied in \cite{FK1, B, LP,
Y} do not represent Schubert classes and therefore are different
than the ones found in \cite{ACT, M1, M2} and the present work.

The $K$-theoretic Stanley functions $F^B_w$, $F^C_w$, and $F^D_w$ lie
at the center of Kirillov and Naruse's approach to Grothendieck
polynomials. When $w$ is a skew element of the Weyl group, the theory
developed here also produces set-valued tableau formulas for
them. Since the lowest degree terms of these formal power series are
the ordinary skew Stanley functions, we deduce new combinatorial
formulas for the latter (Examples \ref{exCtab} and
\ref{exDtab}). These {\em barred $k$-tableau} and {\em barred typed
$k'$-tableau} formulas refine the $k$-tableau and typed $k'$-tableau
formulas of \cite{T1, T3} by separating the powers of $2$ which appear
in the multiplicities there.

Our theorems specialize to obtain formulas for the {\em Grassmannian
Grothendieck polynomials}, that is, the Grothendieck polynomials
indexed by Grassmannian elements. In types B and C, this answers a
question of Hudson, Ikeda, Matsumura, and Naruse
\cite[Sec.\ 1]{HIMN2}. The maximal Grassmannian (and fully
commutative) case of this problem was addressed earlier by Ikeda and
Naruse in \cite[Sec.\ 9]{IN} (see also \cite{GK}). Our formulas for
maximal Grassmannian polynomials differ from loc.\ cit.\ just as our
main result in type A differs from \cite[Thm.\ 3.1]{M2}.

The straightforward proofs were found by modifying the arguments of
\cite{T6}, this time employing the idCoxeter algebra, the Hecke
product on the Weyl group, and the new definitions of set-valued
tableaux in the orthogonal and symplectic Lie types. Other known
approaches to tableau formulas in type A do not suffice for our
purposes, since the Grassmannian elements in types B, C, and D are not
fully commutative. This is reflected, e.g., in the difference between
Proposition \ref{skewfactorA} and Propositions \ref{skewfactorC},
\ref{skewfactorD}, and their corollaries. Nevertheless, we arrive at a
general theory of such formulas which is uniform across the four
types, following \cite{T1, T3, T6}.

We remark that Schubert and Grothendieck polynomials do not give
{\em intrinsic formulas} for the Schubert classes, that is, formulas
which respect the symmetries of the underlying Weyl group
elements. Intrinsic formulas for the equivariant Schubert classes in
the equivariant cohomology ring of classical $G/P$ spaces and
corresponding ones in the theory of degeneracy loci of vector bundles
were obtained in \cite{T2} (in general) and \cite{T5} (for amenable
Weyl group elements). The problem of finding analogues of these
results in $K$-theory remains open. For progress on this question, we
refer the reader to \cite{BKTY}, which contains the general solution
in type A, and \cite{HIMN1}, which studies the Grassmannian loci in
types A, B, and C.

This article is organized as follows. Section \ref{prelims} contains
preliminary material on the relevant Weyl groups, Grothendieck
polynomials, $K$-theoretic Stanley functions, partitions, and
Grassmannian/skew (signed) permutations.  The following Sections
\ref{tAt}, \ref{tCt}, and \ref{tDt} deal with set-valued tableaux and
formulas for skew Grothendieck polynomials in the Lie types A, B/C,
and D, respectively.

\section{Preliminaries}
\label{prelims}

This section recalls some essential background definitions and
notation which will be used in this paper.  For more details on the
less standard among these, the reader may consult \cite{KN} and
\cite{T1, T3, T6}.

\subsection{Weyl groups and reduced words}

The Weyl group for the root system of type $\text{A}_{n-1}$ is the
{\em symmetric group} $S_n$ of permutations of the set
$\{1,\ldots,n\}$. The group $S_n$ is generated by the simple
transpositions $s_i=(i,i+1)$ for $1\leq i \leq n-1$.  The Weyl group
for the root system of type $\text{B}_n$ or $\text{C}_n$ is the {\em
hyperoctahedral group} $W_n$ of signed permutations on the set
$\{1,\ldots,n\}$. The group $W_n$ is generated by the transpositions $s_i$ for
$1\leq i \leq n-1$ and the sign change $s_0(1)=\ov{1}$ (here we set
$\ov{i}:=-i$ for any $i\geq 1$). The elements of $W_n$ are written in
one line notation as $n$-tuples $(w_1,\ldots, w_n)$, where $w_i:=w(i)$
for each $i\in [1,n]$.

There is a natural embedding of $W_n$ in $W_{n+1}$ defined by adding
the fixed point $n+1$, and we let $W_\infty :=\bigcup_n W_n$ and
$S_\infty:=\bigcup_n S_n$. The {\em length} of an element $w\in
W_\infty$, denoted $\ell(w)$, is the least integer $r$ such that we
have an equation $w=s_{a_1} \cdots s_{a_r}$. In this case, the word
$a_1\cdots a_r$ is called a {\em reduced word} for $w$.  We say that
$w$ is {\em decreasing down to $p$} if $w$ has a reduced word
$a_1\cdots a_r$ such that $a_1 > \cdots > a_r \geq p$, and that $w$ is
{\em increasing up from $p$} if $w$ has a reduced word $a_1\cdots a_r$
such that $p\leq a_1 < \cdots < a_r$. Here $p$ denotes a nonnegative
integer.

The Weyl group $\wt{W}_n$ for the root system of type $\text{D}_n$ is the
subgroup of $W_n$ consisting of all signed permutations with an even
number of negative values.  The group $\wt{W}_n$ is an extension of
$S_n$ by $s_\Box:=s_0s_1s_0$, an element which sends $(1,2)$ to
$(-2,-1)$ and fixes all integers $p\geq 3$. We define the natural
embedding $\wt{W}_n\hookrightarrow \wt{W}_{n+1}$ of Weyl groups as
above and set $\wt{W}_\infty:= \bigcup_n \wt{W}_n$.

The simple reflections in type D are indexed by the members of the set
$\N_\Box :=\{\Box,1,2,\ldots\}$, and the length and reduced words of
the elements of $\wt{W}_\infty$ are defined using them. Given $p\geq
1$, we define $w\in \wt{W}_n$ to be decreasing down to $p$ or
increasing up from $p$ in the same way as in types A, B, and
C. However, when $p=\Box$, we say that $w\in \wt{W}_n$ is {\em
  decreasing down to $\Box$} (resp.\ increasing up from $\Box$) if $w$
has a reduced word $a_1\cdots a_r$ which is a subword of
$(n-1,\ldots,2,\Box)$ (resp.\ a subword of $(\Box,2,\ldots,n-1)$).  In
all of the classical Lie types, if $w$ is decreasing down to $p$ or
increasing up from $p$, then the decreasing (resp.\ increasing) word
$a_1\cdots a_r$ for $w$ is uniquely determined.

An element of $W_n$ is called {\em unimodal} if it has a reduced word
$a_1\cdots a_r$ which is a subword of
$\Omega^B_n:=(n-1,\ldots,1,0,1,\ldots,n-1)$. For a unimodal $w\in
W_n$, the number of reduced words of $w$ which are subwords of
$\Omega^C_n:=(n-1,\ldots,1,0,0,1,\ldots,n-1)$ is equal to $2^{n(w)}$
for a nonnegative integer $n(w)$.  An element of $\wt{W}_n$ is called
{\em unimodal} if it has a reduced word $a_1\cdots a_r$ which is a
subword of $\Omega^D_n:=(n-1,\ldots,2,1,\Box,2,\ldots,n-1)$.  For a
unimodal $w\in \wt{W}_n$, the number of reduced words of $w$ which are
subwords of $\Omega^D_n$ is equal to $2^{n'(w)}$ for a nonnegative
integer $n'(w)$.

\subsection{Grothendieck polynomials and $K$-theoretic Stanley functions}
\label{wggp}

The double Grothendieck polynomials for the classical Lie groups
studied here are due to Lascoux and Sch\"utzenberger \cite{LS} and
Fomin and Kirillov \cite{FK1, FK2} (in type A) and Kirillov and Naruse
\cite{KN} (in types B, C, and D). Note that these objects depend on a
formal variable $\beta$, and in the latter three Lie types they are
not polynomials but power series of unbounded degree in the $Z$
variables. We refer the reader to \cite[Sec.\ 6]{KN} for the precise
way in which Grothendieck polynomials represent the stable equivariant
Schubert classes in the equivariant connective $K$-theory of complete
flag manifolds. When the parameter $\beta$ is set equal to zero, we
obtain the double Schubert polynomials defined by Ikeda, Mihalcea, and
Naruse \cite{IMN}, up to a change of sign in the $Y$ variables.

The idCoxeter algebra $\bW^{\be}_n$ of $W_n$ is the free unital
associative $\Z[\be]$-algebra generated by the elements
$\vp_0,\vp_1,\ldots,\vp_{n-1}$ modulo the relations
\[
\begin{array}{rclr}
\vp_i^2 & = & \be\vp_i & i\geq 0\, ; \\
\vp_i\vp_j & = & \vp_j\vp_i & |i-j|\geq 2\, ; \\
\vp_i\vp_{i+1}\vp_i & = & \vp_{i+1}\vp_i\vp_{i+1} & i>0\, ; \\
\vp_0\vp_1\vp_0\vp_1 & = & \vp_1\vp_0\vp_1\vp_0.
\end{array}
\]
For every $w\in W_n$, define $\vp_w := \vp_{a_1}\ldots \vp_{a_r}$, where
$a_1\cdots a_r$ is any reduced word for $w$. The elements $\vp_w$ for
$w\in W_n$ form a free $\Z[\be]$-basis of $\bW^{\be}_n$. We denote the
coefficient of $\vp_w\in \bW^{\be}_n$ in the expansion of the element
$\alpha\in \bW^{\be}_n$ by $\langle \alpha,w\rangle$.

Let $t$ be an indeterminate and define
\begin{gather*}
A_i(t) := (1+t \vp_{n-1})(1+t \vp_{n-2})\cdots (1+t \vp_i) \, ; \\ 
A'_i(t) := (1+t \vp_i)(1+t \vp_{i+1})\cdots (1+t \vp_{n-1}) \, ; \\
B(t) := (1+t \vp_{n-1})\cdots(1+t\vp_1)
(1+t\vp_0)(1+t \vp_1)\cdots (1+t \vp_{n-1}) \, ; \\
C(t) := (1+t \vp_{n-1})\cdots(1+t\vp_1)(1+t\vp_0)
(1+t\vp_0)(1+t \vp_1)\cdots (1+t \vp_{n-1}).
\end{gather*}
Suppose that $X=(x_1,x_2,\ldots)$, $Y=(y_1,y_2,\ldots)$, and
$Z=(z_1,z_2,\ldots)$ are three infinite sequences of commuting
independent variables. For any $\om\in S_n$, the type A Grothendieck
polynomial $\AG_\om$ is given by
\begin{equation}
\label{dbleA}
\AG_\om(X,Y) := \left\langle 
A'_{n-1}(y_{n-1})\cdots A'_1(y_1)A_1(x_1)\cdots A_{n-1}(x_{n-1}), \om\right\rangle,
\end{equation}
while its single version is defined by $\AG_\om(X):=\AG_\om(X,0)$.
If $A(X):=A_1(x_1)A_1(x_2)\cdots$ and $A'(Y):= \cdots A'_1(y_2)A'_1(y_1)$, then the
stable Grothendieck polynomial $G_\om$ of Fomin and Kirillov \cite{FK1} is given by
\[
G_\om(X,Y) := \lim_{m\to\infty}\AG_{1^m\times\om}(X,Y) =
\left\langle A'(Y)A(X), \om\right\rangle,
\]
where $1^m\times \om\in S_{m+n}$ is the permutation defined by $1^m\times \om(i)=i$ for
$i\in [1,m]$ and $1^m\times \om(i)=m+\om(i-m)$ if $i\in [m+1,m+n]$. Following 
Fomin and Kirillov (see \cite{FK1} and \cite[Cor.\ 6.5]{FK2}), the formal power series
$G_\om$ is a $K$-theoretic analogue of the (type A) double Stanley symmetric function.

Let $B(Z):=B(z_1)B(z_2)\cdots$, $C(Z):=C(z_1)C(z_2)\cdots$, and for
$w\in W_n$, define the type B and type C Grothendieck polynomials
$\BG_w$ and $\CG_w$ by
\begin{equation}
\label{dbleB}
\BG_w(Z;X,Y) := \left\langle 
A'_{n-1}(y_{n-1})\cdots A'_1(y_1)B(Z) A_1(x_1)\cdots 
A_{n-1}(x_{n-1}), w\right\rangle
\end{equation}
and
\begin{equation}
\label{dbleC}
\CG_w(Z;X,Y) := \left\langle 
A'_{n-1}(y_{n-1})\cdots A'_1(y_1)C(Z) A_1(x_1)\cdots 
A_{n-1}(x_{n-1}), w\right\rangle,
\end{equation}
while their single versions are given by $\BG_w(Z;X):=\BG_w(Z;X,0)$
and $\CG_w(Z;X):=\CG_w(Z;X,0)$, respectively. To compare with
\cite{KN}, note that the polynomial called ${\mathcal G}_w^B(a,b;x)$
in op.\ cit.\ would be the polynomial denoted by $\BG_w(x;a,b)$ here.
The polynomials $\BG_w$ and $\CG_w$ are stable under the inclusion of
$W_n$ in $W_{n+1}$; it follows that $\BG_w$, $\CG_w$, and $\AG_\om$ are
well defined for $w\in W_\infty$ and $\om\in S_\infty$, respectively.
The type B and type C $K$-theoretic Stanley functions $F^B_w$ and $F^C_w$
of \cite{KN} are given by $F^B_w(Z):=\left\langle B(Z),
w\right\rangle$ and $F^C_w(Z):=\left\langle C(Z), w\right\rangle$.

The idCoxeter algebra $\wt{\bW}^{\be}_n$ of the group $\wt{W}_n$ is the free unital
associative $\Z[\be]$-algebra generated by the elements
$\vp_\Box,\vp_1,\ldots,\vp_{n-1}$ modulo the relations
\[
\begin{array}{rclr}
\vp_i^2 & = & \be\vp_i & i\in \N_\Box\, ; \\
\vp_\Box \vp_1 & = & \vp_1 \vp_\Box\, ; \\
\vp_\Box \vp_2 \vp_\Box & = & \vp_2 \vp_\Box \vp_2\, ; \\
\vp_i\vp_{i+1}\vp_i & = & \vp_{i+1}\vp_i\vp_{i+1} & i>0\, ; \\
\vp_i\vp_j & = & \vp_j\vp_i & j> i+1, \ \text{and} \ (i,j) \neq (\Box,2).
\end{array}
\]

For any element $w\in \wt{W}_n$, choose a reduced word $a_1\cdots a_r$
for $w$, and define $\vp_w := \vp_{a_1}\ldots \vp_{a_r}$.  Denote the
coefficient of $\vp_w\in \wt{\bW}^{\be}_n$ in the expansion of the
element $\alpha\in \wt{\bW}^{\be}_n$ in the $\vp_w$ basis by $\langle
\alpha,w\rangle$, and define
\[
D(t) := (1+t \vp_{n-1})\cdots (1+t \vp_2)(1+t \vp_1)(1+t \vp_\Box)
(1+t \vp_2)\cdots (1+t \vp_{n-1}).
\]
Let $D(Z):=D(z_1)D(z_2)\cdots$, and for $w\in \wt{W}_n$, define
the type D Grothendieck polynomial $\DG_w$ by 
\begin{equation}
\label{dbleD}
\DG_w(Z;X,Y) := \left\langle 
A'_{n-1}(y_{n-1})\cdots A'_1(y_1) D(Z) A_1(x_1)\cdots 
A_{n-1}(x_{n-1}), w\right\rangle,
\end{equation}
and set $\DG_w(Z;X):=\DG_w(Z;X,0)$. The Grothendieck polynomial
$\DG_w(Z;X,Y)$ is stable under the natural inclusions
$\wt{W}_n\hookrightarrow \wt{W}_{n+1}$, and hence is well defined for
$w\in \wt{W}_\infty$.  Following \cite{KN}, the type D $K$-theoretic
Stanley function $F^D_w$ is defined by $F^D_w(Z):=\left\langle D(Z),
w\right\rangle$.

Given any Weyl group elements $u,v,w$, we say that $w$ is the {\em
  Hecke product} $u\di v$ of $u$ and $v$ if $\vp_u\vp_v =
\be^{\ell(u)+\ell(v)-\ell(w)}\vp_w$ in the corresponding idCoxeter
algebra. The Hecke product $\di$ is characterized by the relations
$s_i\di w = s_iw$, if $\ell(s_iw)>\ell(w)$, and $s_i\di w = w$, if
$\ell(s_iw)<\ell(w)$, for each simple reflection $s_i$.  This defines
an associative product on the Weyl group such that $u\di v = uv$ if
and only if $\ell(uv)=\ell(u)+\ell(v)$. In the latter case, we say
that the product $uv$ is {\em reduced}.  If $u_1\di\cdots\di u_r =w$,
then $u_1\di\cdots\di u_r$ is called a {\em Hecke factorization} of
$w$.

\subsection{Partitions and Grassmannian/skew Weyl group elements}
\label{gpps}

\subsubsection{Type A}
\label{typeA}

A {\em partition} $\la=(\la_1,\la_2,\ldots,\la_r)$ is a finite weakly decreasing
sequence of nonnegative integers, which we identify with its Young diagram of
boxes. The {\em length} of $\la$ is the number of non-zero parts $\la_i$.
The containment relation $\mu\subset\la$ of partitions is defined by
using their diagrams, and the set-theoretic difference $\la\ssm\mu$ is
the {\em skew diagram} $\la/\mu$. A skew diagram is a {\em horizontal strip}
(resp.\ {\em vertical strip}) if it does not contain two boxes in the
same column (resp.\ row), and, following \cite[Sec.\ 4]{B}, a {\em
  rook strip} if it contains no two boxes in the same row or column.

Fix a nonnegative integer $m$. An element $\om$ in $S_\infty$ is {\em
  $m$-Grassmannian} if $\ell(\om s_i)>\ell(\om)$ for all $i\neq m$.
Every $m$-Grassmannian permutation $\om$ corresponds to a unique
partition $\la$ of length at most $m$, called the {\em shape} of
$\om$, satisfying $\la=(\om_m-m,\ldots,\om_1-1)$.  When the shape
$\la$ and $m$ are given, we denote the corresponding permutation by
$\om_\la$.

A permutation $\om\in S_\infty$ is called {\em skew} if there exists
an $m$-Grassmannian permutation $\om_\la$ (for some $m$) and a reduced
factorization $\om_\la = \om\om'$ in $S_\infty$. In this case, the
right factor $\om'$ equals $\om_\mu$ for some $m$-Grassmannian
permutation $\om_\mu$, and we have $\mu\subset\la$.  We say that $\om$
is associated to the pair $(\la,\mu)$ and write $\om=\om_{\la/\mu}$.
Moreover, we have $\ell(\om)=|\la/\mu|:=\sum_i (\la_i-\mu_i)$, and
there is a 1-1 correspondence between reduced factorizations $uv$ of
$\om$ and partitions $\nu$ with $\mu\subset\nu\subset\la$.

\subsubsection{Types B and C}
\label{typesBC}

Fix a nonnegative integer $k$. We denote the box in row $r$ and column
$c$ of a Young diagram by $[r,c]$, and call the box $[r,c]$ a {\em
  left box} if $c \leq k$ and a {\em right box} if $c>k$. We say that
the boxes $[r,c]$ and $[r',c']$ are {\em $k$-related} if $|c-k-1|+r =
|c'-k-1|+r'$ and {\em $k'$-related} if $|c-k-\frac{1}{2}|+r =
|c'-k-\frac{1}{2}|+r'$.

A partition $\la$ is called {\em $k$-strict} if no part greater than
$k$ is repeated. The number of parts $\la_i$ of $\la$ which are
greater than $k$ is the {\em $k$-length} of $\la$, denoted by
$\ell_k(\la)$. We define $\la_0:=\infty$ and agree that the diagram of
$\la$ includes all boxes $[0,c]$ in row zero.  The {\em rim} of $\la$
is the set of boxes $[r,c]$ of its Young diagram such that box
$[r+1,c+1]$ lies outside of the diagram.

If $\mu\subset\la$ are two $k$-strict partitions, we let $R$
(resp.\ $\A$) denote the set of right boxes of $\mu$ (including boxes
in row zero) which are bottom boxes of $\la$ in their column and are
(resp.\ are not) $k'$-related to a left box of $\la/\mu$. The pair
$\mu\subset\la$ forms a {\em $k$-horizontal strip} $\la/\mu$ if (a)
$\la/\mu$ is contained in the rim of $\la$, and the right boxes of
$\la/\mu$ form a horizontal strip; (b) no two boxes in $R$ are
$k'$-related; and (c) if two boxes of $\la/\mu$ lie in the same
column, then they are $k'$-related to exactly two boxes of $R$, which
both lie in the same row.  We let $n(\la/\mu)$ denote the number of
connected components of $\A$ which do not have a box in column
$k+1$. Here two boxes in $\A$ are connected if they share a vertex or
an edge.

An element $w$ of $W_\infty$ is {\em $k$-Grassmannian} if it satisfies
$\ell(ws_i)>\ell(w)$ for all $i\neq k$.  Each $k$-Grassmannian element
$w \in W_\infty$ corresponds to a unique $k$-strict partition $\la$,
called the {\em shape} of $w$.  We have
\[
\la_i=\begin{cases} 
k+|w_{k+i}| & \text{if $w_{k+i}<0$}, \\
\#\{j\in [1,k]\, :\, w_j> w_{k+i}\} & \text{if $w_{k+i}>0$}.
\end{cases}
\]
A formula for the inverse of this map is found in
\cite[Sec.\ 3.1]{T6}.  Note that if $w$ lies in $S_\infty$, then its
shape in types B and C is the partition whose diagram is the transpose
of its shape in type A.  If the shape $\la$ and $k$ are given, then we
denote the corresponding Weyl group element by $w_\la$.

\subsubsection{Type D}

Fix a positive integer $k$.  A {\em typed $k$-strict partition} is a
pair consisting of a $k$-strict partition $\la$ together with an
integer $\type(\la)\in \{0,1,2\}$, which is positive if and only if
$\la_i=k$ for some index $i$.

If $\mu\subset\la$ are two typed $k$-strict partitions, the sets $R$
and $\A$ are defined as in types B and C, replacing `$k'$-related' by
`$(k-1)$-related'. A pair $\mu\subset\la$ of typed $k$-strict
partitions forms a {\em typed $k'$-horizontal strip} $\la/\mu$ if
$\type(\la)+\type(\mu)\neq 3$ and conditions (a), (b), and (c) in the
definition of a $k$-horizontal strip hold, again replacing
`$k'$-related' by `$(k-1)$-related'.  We define $n'(\la/\mu)$ to be
one less than the number of connected components of $\A$.

An element $w\in \wt{W}_\infty$ has type $0$ if $|w_1|=1$, type $1$ if
$w_1>1$, and type 2 if $w_1<-1$. We say that $w\in \wt{W}_\infty$ is
{\em $k$-Grassmannian} if $\ell(ws_i)>\ell(w)$ for all $i\neq k$, if
$k>1$, and for all $i\notin \{\Box,1\}$, if $k=1$.  There is a type-preserving
bijection between the $k$-Grassmannian elements $w$ of $\wt{W}_\infty$
and typed $k$-strict partitions $\la$. We have
\[
\la_i=\begin{cases} 
k-1+|w_{k+i}| & \text{if $w_{k+i}<0$}, \\
\#\{j\in [1,k]\, :\, |w_j|> w_{k+i}\} & \text{if $w_{k+i}>0$}.
\end{cases}
\]
See \cite[Sec.\ 4.1]{T6} for a description of inverse of this map.
If the Weyl group element $w$ corresponds to the typed $k$-strict
partition $\la$, then we denote $w=w(\la,k)$ by $w_\la$.

\medskip

A Weyl group element $w$ in Lie type B or C (resp.\ D) is called {\em
  skew} if there exists a $k$-Grassmannian element $w_\la$ for some
$k\geq 0$ (resp.\ for some $k\geq 1$) and a reduced factorization
$w_\la = ww'$ in $W_\infty$ (resp.\ $\wt{W}_\infty$).  In this case,
the right factor $w'$ equals $w_\mu$ for some $k$-Grassmannian element
$w_\mu$, and we have $\mu\subset\la$. We say that $(\la,\mu)$ is a
{\em compatible pair} of (typed) $k$-strict partitions and that $w$ is
associated to the pair $(\la,\mu)$. We write $w=w_{\la/\mu}$ and note
that $\ell(w)=|\la/\mu|$. For example, any $k$-horizontal strip
(resp.\ typed $k'$-horizontal strip) $\la/\mu$ is a compatible pair
$(\la,\mu)$ of (typed) $k$-strict partitions.

We say that a sequence of (typed) $k$-strict partitions $ \la^0\subset
\la^1 \subset \cdots \subset \la^p $ is {\em compatible} if
$(\la^i,\la^{i-1})$ is a compatible pair for each $i\in [1,p]$. If
$p=2$ we say that $\la^0\subset \la^1 \subset\la^2$ is a {\em
compatible triple} of such partitions.  There is a 1-1
correspondence between reduced factorizations $uv$ of $w_{\la/\mu}$
and (typed) $k$-strict partitions $\nu$ such that
$\mu\subset\nu\subset\la$ is a compatible triple, with
$u=w_{\la/\nu}$ and $v=w_{\nu/\mu}$.

\section{Tableau formula for type A skew Grothendieck polynomials}
\label{tAt}

In this section, we assume that all Grassmannian and skew permutations are
taken with respect to a fixed positive integer $m$.
An $m$-Grassmannian element $\om\in S_n$ satisfies
$\ell(s_i\om)<\ell(\om)$ if and only if $\om=(\cdots i+1 \cdots | \cdots
i \cdots)$, where the vertical line $|$ lies between $\om_m$ and
$\om_{m+1}$. Using this, it is easy to see that a skew permutation
$\om_{\la/\mu}$ is decreasing down to $1$ (resp.\ increasing
up from $1$) if and only if $\la/\mu$ is a horizontal
(resp.\ vertical) strip.

\begin{prop}
\label{skewfactorA}
Suppose that $\om_{\la/\mu}=u\di v$ for some skew permutation
$\om_{\la/\mu}$. Then there exist partitions $\nu$ and $\rho$ with
$\mu\subset\rho\subset\nu\subset\la$ such that $\nu/\rho$ is a rook
strip, $u=\om_{\la/\rho}$, and $v=\om_{\nu/\mu}$. Moreover, the
converse statement also holds.
\end{prop}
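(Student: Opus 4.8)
The strategy is to translate the statement about the Hecke product into the combinatorial language of partitions, using the 1-1 correspondence (recalled at the end of \S\ref{typeA}) between reduced factorizations $uv$ of a skew permutation $\om_{\la/\mu}$ and partitions $\nu$ with $\mu\subset\nu\subset\la$. First I would recall that the Hecke product $u\di v$ unwinds into a genuine reduced product: writing $\vp_u\vp_v = \be^{\ell(u)+\ell(v)-\ell(w)}\vp_w$ with $w=\om_{\la/\mu}$, one may choose a reduced word for $u$ and a reduced word for $v$, concatenate them, and extract a reduced subword for $w$; the remaining (deleted) letters assemble, after the cancellations governed by the idCoxeter relation $\vp_i^2=\be\vp_i$, into a reduced word for an element $t$ with $u\di t$ reduced equal to $u$ in a suitable sense. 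The cleaner way to phrase this: since the Hecke product is associative and $s_i\di w$ is either $s_iw$ or $w$, there is a subword of a reduced word for $v$ which is a reduced word for an element $v'$ with $u v' = w$ reduced, and the letters of $v$ not used form a reduced word for an element $t$ with $v = t\di v'$ ``on the nose,'' i.e.\ each letter of $t$ is absorbed. Concretely, I would set $v' = u^{-1}\di w$ and let $t$ be determined by $u v'$ being the reduced part.

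The heart of the argument is then to identify the relevant partitions. Since $u v' = w = \om_{\la/\mu}$ is a reduced factorization, the correspondence gives a partition $\rho$ with $\mu\subset\rho\subset\la$, $u = \om_{\la/\rho}$, and $v' = \om_{\rho/\mu}$. I would then need a partition $\nu$ with $\rho\subset\nu\subset\la$ and $v = \om_{\nu/\mu}$. The key point is that $u\di v = u\di v'$ (both equal $w$, since $u v' = w$ and $t$ is absorbed into $u v'$ through Hecke multiplication), so $v$ and $v'$ are ``Hecke-equivalent against $u$ on the left''; the partition $\nu$ records $v$ as a skew permutation in its own right. I would verify that the condition $\rho\subset\nu\subset\la$ (rather than merely $\mu\subset\nu\subset\la$) comes precisely from the fact that the letters deleted in forming the reduced word for $w$ out of the word for $u$ followed by the word for $v$ are exactly those corresponding to boxes of $\la/\rho$ that also appear in the diagram recording $v$; these are the boxes of $\nu/\rho$. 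The claim that $\nu/\rho$ is a \emph{rook strip} is the crucial geometric constraint: each squared generator $\vp_i^2 = \be\vp_i$ that contributes to the power of $\be$ in $\vp_u\vp_v = \be^{\ell(u)+\ell(v)-\ell(w)}\vp_w$ corresponds to one box, and in the $m$-Grassmannian setting the commutation of these deletions (no two can be in the same row or column of $\la/\rho$, because consecutive equal letters in the word that would produce a non-rook-strip overlap are forbidden in a $321$-avoiding context) forces the rook strip condition. I would make this precise by examining how a reduced word for $\om_{\la/\rho}$ read off the skew diagram (say by rows) interacts with the deletions.

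For the converse, I would start from partitions $\mu\subset\rho\subset\nu\subset\la$ with $\nu/\rho$ a rook strip, set $u = \om_{\la/\rho}$ and $v = \om_{\nu/\mu}$, and compute $u\di v$ directly in $\bW^\be_n$. Using the factorization of $v$ corresponding to the compatible triple $\mu\subset\rho\subset\nu$ — which writes $v = \om_{\nu/\rho}\di \om_{\rho/\mu}$ reducedly — I get $\vp_u\vp_v = \vp_{\om_{\la/\rho}}\vp_{\om_{\nu/\rho}}\vp_{\om_{\rho/\mu}}$; since $\om_{\la/\rho}\,\om_{\rho/\mu} = \om_{\la/\mu}$ is reduced, it remains to check that inserting $\vp_{\om_{\nu/\rho}}$ (a product of commuting $\vp_i$'s, one per box of the rook strip, since distinct rows and columns give commuting generators) between them multiplies by $\be^{|\nu/\rho|}$ and returns $\vp_{\om_{\la/\mu}}$ — i.e.\ each generator $\vp_i$ coming from a box of $\nu/\rho$ is already ``used up'' in $\om_{\la/\rho}$ in the appropriate position, so $\vp_i^2=\be\vp_i$ applies. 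This is where the rook strip hypothesis is used again: it guarantees the commutations needed to slide each such $\vp_i$ next to its twin.

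\textbf{Main obstacle.} The delicate part is the rook strip condition and its exact bookkeeping: showing in the forward direction that the deleted letters form a rook strip $\nu/\rho$ (and not merely an arbitrary skew shape), and in the converse direction that a rook strip — as opposed to, say, a horizontal or vertical strip — is exactly what permits all the required $\vp_i^2\to\be\vp_i$ reductions after commuting generators into place. I expect to handle this by fixing the canonical row reading word of an $m$-Grassmannian (and skew) permutation from its skew diagram, as in \cite{T6}, and tracking which positions get deleted; the fully commutative ($321$-avoiding) nature of skew permutations in type A — which is precisely why Proposition \ref{skewfactorA} is simpler than its type C and D analogues — makes the commutations automatic once the rook strip shape is established.
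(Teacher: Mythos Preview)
Your converse is essentially the paper's: writing $\om_{\la/\rho}=\om_{\la/\nu}\,\om_{\nu/\rho}$ reduced and using that $\om_{\nu/\rho}$ is a product of pairwise commuting simple reflections (boxes of a rook strip in a skew shape have contents differing by at least $2$), one gets $\om_{\nu/\rho}\di\om_{\nu/\rho}=\om_{\nu/\rho}$, and the extra factor is absorbed. The paper phrases this as $\om_{\nu/\rho}\di\om_{\nu/\mu}=\om_{\nu/\mu}$ and then composes with $\om_{\la/\nu}$; same mechanism, other side.

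In the forward direction there is a genuine gap. You correctly extract $\rho$ and $\nu$ from the two reduced factorizations $uv'=w$ and $u''v=w$, and the count $|\nu|-|\rho|=\ell(u)+\ell(v)-\ell(w)$ is right. The containment $\rho\subset\nu$ can indeed be obtained without induction (from $u\di\om_\nu=\om_\la=u\,\om_\rho$ one extracts a reduced subword, giving $\om_\rho\leq\om_\nu$ in Bruhat order, hence $\rho\subset\nu$ for Grassmannian elements). But the rook strip property is the entire content of the proposition, and your justification --- ``consecutive equal letters \ldots\ are forbidden in a $321$-avoiding context'' and ``track which positions get deleted in the canonical row reading word'' --- is not an argument. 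Nothing in your outline explains why the deleted boxes cannot share a row or a column; full commutativity of $u$ and $v$ separately does not by itself control the shape of their Hecke overlap.

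The paper handles this by induction on $\ell(u)$, which is the missing idea. One writes $u=s_iu'$, applies the hypothesis to $u'\di v=\om_{\zeta/\mu}$ to obtain a rook strip $\nu/\rho$ inside $\zeta$, and then treats the single extra reflection $s_i$. If $s_i\om_\zeta=\om_\la$ is reduced there is nothing to do. If $s_i$ is absorbed (so $\zeta=\la$), one identifies the unique box $b$ with $s_i\om_\la=\om_{\la\ssm b}$, shows $b\in\rho$ by a length comparison, checks that $s_i$ commutes past $u'=\om_{\la/\rho}$ because the reduced word of $u'$ avoids $\{i-1,i,i+1\}$, and finally observes that since both $\la\ssm b$ and $\rho\ssm b$ are partitions while $\rho\subset\nu\subset\la$, the box $b$ is alone in its row and column of $\nu/(\rho\ssm b)$. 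This box-by-box bookkeeping is exactly what your ``main obstacle'' paragraph is reaching for; the induction is what makes it go through.
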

\begin{proof}
Suppose that $u=s_{a_1}\cdots s_{a_r}$ with $r=\ell(u)$, so that
$\om_{\la/\mu} = s_{a_1}\di\cdots \di s_{a_r} \di v$. Since $s_i\di v$
is equal to $v$ or to the reduced product $s_iv$ for any $i$, it is
clear that there exists a reduced factorization $\om_{\la/\mu}= \om'v$
for some $\om'$, and hence that $v=\om_{\nu/\mu}$ for some partition
$\nu$ with $\mu\subset\nu\subset\la$.

The assertions regarding $u$ and $\rho$ are proved using induction on
$\ell(u)$. If $\ell(u)=1$ then $u=s_i$ for some $i$, so
$\om_{\la/\mu}=s_i\di \om_{\nu/\mu}$. If
$s_i\di\om_{\nu/\mu}=s_i\om_{\nu/\mu}$, then we set $\rho:=\nu$. If
$s_i\di\om_{\nu/\mu}=\om_{\nu/\mu}$, then $\nu=\la$ and
$s_i\om_{\nu/\mu}=\om_{\rho/\mu}$ for some partition $\rho$ with
$\mu\subset\rho\subset\nu$ and $|\rho|=|\nu|-1$, which satisfies the
required property.

Assume next that $u=s_iu'$ with $\ell(u)=\ell(u')+1$ and
$\om_{\la/\mu}=u\di\om_{\nu/\mu} = s_i\di u' \di \om_{\nu/\mu}$. It
follows as above that $u'\di \om_{\nu/\mu}=\om_{\zeta/\mu}$ for some
partition $\zeta\subset\la$, so $s_i\di\om_\zeta=\om_\la$.  By the
inductive hypothesis, we have $u'=\om_{\zeta/\rho}$ for some partition
$\rho\subset\nu$ such that $\nu/\rho$ is a rook strip. If
$|\zeta|=|\la|-1$, then $s_i\om_\zeta=\om_\la$ is a reduced
factorization, hence $u=\om_{\la/\rho}$ and we are done.  If
$\zeta=\la$, then $\ell(s_i\om_\la)=\ell(\om_\la)-1$ implies that
$\om_\la=(\cdots i+1 \cdots | \cdots i \cdots )$, that is,
$\om_\la(e)=i+1$ and $\om_\la(f)=i$ where $e\leq m < f=m+1+i-e$, and
$s_i\om_\la=\om_{\la\ssm b}$ for some box $b$ in row $m+1-e$. Now
$\ell(s_i\om_{\la/\rho})=\ell(s_iu')=\ell(u')+1$ implies that $b$ must
lie in $\rho$. (Indeed, if not then we have $\rho\subset (\la\ssm b)$,
hence a reduced factorization $w_{\la\ssm b} = vw_\rho$ for some $v$,
therefore $u'=s_iv$ is also reduced and so $\ell(s_iu')=\ell(u')-1$.)
Since both $\la\ssm b$ and $\wt{\rho}:=\rho\ssm b$ are partitions, we
must also have $\om_\rho(e) = i+1$ and $\om_\rho(f)=i$.

If $s_{a_1}\cdots s_{a_r}$ is a reduced word of $u'=\om_{\la/\rho}$,
then we deduce that $\{a_1,\ldots, a_r\}\cap \{i-1,i,i+1\}=\emptyset$,
and hence that $s_i\om_{\la/\rho}=\om_{\la/\rho}s_i$. Now
$s_i\om_{\wt{\rho}}=\om_\rho$ and therefore
$u\om_{\wt{\rho}}=s_i\om_{\la/\rho}\om_{\wt{\rho}}=\om_{\la/\rho}\om_{\rho}=\om_{\la}$.
We conclude that $u=\om_{\la/\wt{\rho}}$. Finally, since $\la\ssm b$
is a partition and $\rho\subset\nu\subset\la$, it follows that $b$ is
the only box in its row and column in $\nu/\wt{\rho}$, and therefore
that $\nu/\wt{\rho}=(\nu/\rho)\cup b$ is a rook strip.

To prove the converse, suppose that $\mu\subset\rho\subset\nu\subset\la$ are
such that $\nu/\rho$ is a rook strip. Then it is easy to see by induction on
$|\nu/\rho|$ that $\om_{\nu/\rho}\circ \om_{\nu/\mu} = \om_{\nu/\mu}$. Therefore, we have
$\om_{\la/\rho}\circ \om_{\nu/\mu} = \om_{\la/\nu}\circ\om_{\nu/\rho}\circ \om_{\nu/\mu}=
\om_{\la/\nu}\circ\om_{\nu/\mu}=\om_{\la/\mu}$.
\end{proof}

\begin{cor}
\label{Afact}
For the skew element $\om_{\la/\mu}\in S_\infty$, we have
\[
\AG_{\om_{\la/\mu}}(X,Y) = \sum_{\rho,\nu} \AG_{\om_{\nu/\mu}}(X)\AG_{\om_{\la/\rho}^{-1}}(Y)
\]
summed over all partitions $\rho, \nu$ with $\mu\subset\rho\subset\nu\subset\la$
such that $\nu/\rho$ is a rook strip.
\end{cor}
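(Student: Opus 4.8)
The plan is to expand the double Grothendieck polynomial $\AG_{\om_{\la/\mu}}$ via its definition \eqref{dbleA} as a coefficient in the idCoxeter algebra, and to separate the $X$-part from the $Y$-part using the Hecke product. Recall from \eqref{dbleA} that
\[
\AG_{\om_{\la/\mu}}(X,Y) = \left\langle A'_{n-1}(y_{n-1})\cdots A'_1(y_1)\,A_1(x_1)\cdots A_{n-1}(x_{n-1}),\ \om_{\la/\mu}\right\rangle,
\]
and that the first block $A'_{n-1}(y_{n-1})\cdots A'_1(y_1)$ expands in the $\vp$-basis as $\sum_u c_u(Y)\,\vp_u$ while the second block $A_1(x_1)\cdots A_{n-1}(x_{n-1})$ expands as $\sum_v c'_v(X)\,\vp_v$. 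Multiplying and collecting, the coefficient of $\vp_{\om_{\la/\mu}}$ is a sum over all Hecke factorizations $u\di v = \om_{\la/\mu}$ of (a power of $\be$ times) $c_u(Y)c'_v(X)$; by definition of the Hecke product, each such term is exactly $\langle A'_{n-1}(y_{n-1})\cdots A'_1(y_1), u\rangle\langle A_1(x_1)\cdots A_{n-1}(x_{n-1}), v\rangle$ with the $\be$-powers absorbed correctly, so
\[
\AG_{\om_{\la/\mu}}(X,Y) = \sum_{u\di v = \om_{\la/\mu}} \AG_v(X)\,\widetilde{\AG}_u(Y),
\]
where $\widetilde{\AG}_u(Y) := \langle A'_{n-1}(y_{n-1})\cdots A'_1(y_1), u\rangle$. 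A standard identity (used e.g.\ in \cite{T6}) identifies this $Y$-only factor: since the $A'$ block is the transpose/reverse of the $A$ block, one has $\widetilde{\AG}_u(Y) = \AG_{u^{-1}}(Y)$.

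Next I would invoke Proposition \ref{skewfactorA} to parametrize the Hecke factorizations $\om_{\la/\mu} = u\di v$. That proposition gives a bijection between such factorizations and pairs of partitions $\mu\subset\rho\subset\nu\subset\la$ with $\nu/\rho$ a rook strip, under which $u = \om_{\la/\rho}$ and $v = \om_{\nu/\mu}$. Substituting into the sum above and using $\widetilde{\AG}_{\om_{\la/\rho}}(Y) = \AG_{\om_{\la/\rho}^{-1}}(Y)$, I obtain exactly
\[
\AG_{\om_{\la/\mu}}(X,Y) = \sum_{\rho,\nu} \AG_{\om_{\nu/\mu}}(X)\,\AG_{\om_{\la/\rho}^{-1}}(Y),
\]
the claimed formula.

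The one point that needs care — and the main obstacle — is verifying that the expansion of the product of the two idCoxeter-algebra blocks really does organize itself into a sum indexed by Hecke factorizations, with the $\be$-bookkeeping consistent: when $\vp_u\vp_v = \be^{\ell(u)+\ell(v)-\ell(w)}\vp_w$, the $\be$-powers carried by $c_u(Y)$ and $c'_v(X)$ individually must combine with this extra factor to reproduce the single-variable Grothendieck polynomials $\AG_v(X)$ and $\AG_{u^{-1}}(Y)$ without spurious $\be$'s. This is really just the statement that $\langle\alpha\gamma, w\rangle = \sum_{u\di v = w}\langle\alpha, u\rangle\langle\gamma, v\rangle$ in the idCoxeter algebra, which follows directly from the definition of $\di$ and the multiplicativity of $\langle\,\cdot\,,\,\cdot\,\rangle$; but it should be stated explicitly. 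The identification $\widetilde{\AG}_u(Y) = \AG_{u^{-1}}(Y)$ can either be cited from the literature or checked by noting that reversing a word and applying the anti-automorphism $\vp_i\mapsto\vp_i$ of the idCoxeter algebra sends $\vp_u$ to $\vp_{u^{-1}}$, which matches the way the $Y$-variables enter \eqref{dbleA}. Everything else is formal.
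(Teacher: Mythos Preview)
Your proof is correct and follows the same approach as the paper: derive the identity $\AG_\om(X,Y) = \sum_{u\di v=\om} \AG_v(X)\AG_{u^{-1}}(Y)$ from the definition \eqref{dbleA} and then invoke Proposition~\ref{skewfactorA} to parametrize the Hecke factorizations by pairs $(\rho,\nu)$. You simply spell out in more detail the $\be$-bookkeeping and the identification $\widetilde{\AG}_u(Y)=\AG_{u^{-1}}(Y)$ that the paper takes for granted; one small caution is that Proposition~\ref{skewfactorA} is phrased as an existence statement plus converse rather than literally as a bijection, but the bijectivity is immediate since $u$ and $v$ determine $\rho$ and $\nu$ uniquely.
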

\begin{proof}
The definition (\ref{dbleA}) of $\AG_\om$ implies that
\[
\AG_\om(X,Y) = \sum_{u\di v=\om} \AG_v(X)\AG_{u^{-1}}(Y)
\]
summed over all Hecke factorizations $u\di v$ of $\om$. The result follows immediately from
this and Proposition \ref{skewfactorA}.
\end{proof}

Let $\la$ and $\mu$ be any two partitions of length at most $m$ with
$\mu\subset\la$, and choose $n\geq 1$ such that $\om_\la\in S_n$. Let {\bf P}
denote the ordered alphabet 
\[
(n-1)'<\cdots<1'<1<\cdots<n-1. 
\] 
The symbols $(n-1)',\ldots,1'$ are said to be {\em marked}, while the rest
are {\em unmarked}. Given a subset $A$ of {\bf P}, an
any element of $A$ will be called an {\em entry}.
If $A$ and $B$ are two nonempty subsets of an ordered set of symbols, we
write $A\leq B$ if $\max(A) \leq \min(B)$.

\begin{defn}
\label{AGtdef}
A {\em set-valued $m$-bitableau} $U$ of shape $\la/\mu$ is a filling
of the boxes in $\la/\mu$ with finite nonempty subsets of {\bf P}
which is weakly increasing along each row and down each column, such
that (i) the marked (resp.\ unmarked) entries are strictly increasing
each down each column (resp.\ along each row), and (ii) the entries in
row $i$ lie in the interval $[(\mu_i+m+1-i)',\la_i+m-i]$ for each
$i\in [1,m]$.  We let $|U|$ be the total number of symbols of {\bf P}
which appear in $U$, including their multiplicities, and define
\[
(xy)^U:= \prod_ix_i^{n'_i}\prod_i y_i^{n_i} 
\]
where $n'_i$ (resp.\ $n_i$) denotes the number of times that $i'$
(resp.\ $i$) appears in $U$.
\end{defn}

\begin{thm}
\label{Askew} For the skew permutation $\om:=\om_{\la/\mu}$ in $S_n$, we have
\begin{equation}
\label{AGteq}
\AG_\om(X,Y)=\sum_U \be^{|U|-|\la/\mu|}(xy)^U
\end{equation}
summed over all set-valued $m$-bitableaux $U$ of shape $\la/\mu$.
\end{thm}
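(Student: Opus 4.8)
The plan is to expand the operator that defines $\AG_{\om_{\la/\mu}}$ into a sum over Hecke factorizations and then convert each such factorization, by iterating Proposition \ref{skewfactorA}, into a set-valued $m$-bitableau. Since $A_p(t)=(1+t\vp_{n-1})\cdots(1+t\vp_p)$ and $A'_p(t)=(1+t\vp_p)\cdots(1+t\vp_{n-1})$, expanding every binomial factor in \eqref{dbleA} and using that $\vp_u\vp_v=\be^{\ell(u)+\ell(v)-\ell(u\di v)}\vp_{u\di v}$ gives
\[
\AG_{\om_{\la/\mu}}(X,Y)=\sum \be^{\,L-|\la/\mu|}\prod_q y_q^{\ell(\iota_q)}\prod_p x_p^{\ell(\delta_p)},
\]
the sum being over all Hecke factorizations $\om_{\la/\mu}=\iota_{n-1}\di\cdots\di\iota_1\di\delta_1\di\cdots\di\delta_{n-1}$ in which each $\iota_q$ is increasing up from $q$, each $\delta_p$ is decreasing down to $p$, and $L=\sum_q\ell(\iota_q)+\sum_p\ell(\delta_p)$. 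This step is purely formal.

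Next I would apply Proposition \ref{skewfactorA} repeatedly, peeling the leftmost factor off at each stage: starting from $\om_{\la/\mu}=\iota_{n-1}\di(\text{rest})$ one obtains $\iota_{n-1}=\om_{\la/\rho}$ and $\text{rest}=\om_{\nu/\mu}$ with $\mu\subset\rho\subset\nu\subset\la$ and $\nu/\rho$ a rook strip; recursing on $\text{rest}$ (with $\la$ replaced by $\nu$) and continuing for all $2n-2$ factors produces a chain of partitions $\la=\pi^0\supseteq\pi^1\supseteq\cdots\supseteq\pi^{2n-2}=\mu$, where each factor $\iota_q$ or $\delta_p$ is realized as a skew element on one step and the slack introduced at step $i$ is a rook strip $\pi^i/\rho^{(i)}$. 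Equivalently, this data records for every box $b$ of $\la/\mu$ the set of factors whose skew shape contains $b$. Recording in box $b$ the symbol $p'$ for each such $\delta_p$ and the symbol $q$ for each such $\iota_q$ yields a filling $U$ of $\la/\mu$ by finite nonempty subsets of {\bf P} with $|U|=L$ and $(xy)^U=\prod_q y_q^{\ell(\iota_q)}\prod_p x_p^{\ell(\delta_p)}$, so that the displayed sum becomes $\sum_U\be^{|U|-|\la/\mu|}(xy)^U$ as soon as we know that $U$ ranges exactly over the set-valued $m$-bitableaux of shape $\la/\mu$ and that the construction is a bijection.

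The heart of the proof is therefore this last identification. One must check: that the marked entries of $U$ strictly increase down columns and the unmarked entries strictly increase along rows (condition (i) of Definition \ref{AGtdef}), which reflects that each $\delta_p$ is decreasing and each $\iota_q$ is increasing — the prototype being the characterization recalled just before the proposition, that a skew element is decreasing down to $1$ exactly when its diagram is a horizontal strip; that the entries of row $i$ lie in $[(\mu_i+m+1-i)',\la_i+m-i]$ (condition (ii)), which comes from the fact that $\delta_p$ and $\iota_q$ only use simple reflections $s_j$ with $j\geq p$ and $j\geq q$ respectively, combined with the explicit one-line form of the $m$-Grassmannian elements $\om_\nu$; that $U$ is weakly increasing along rows and down columns, which follows from the nesting of the $\pi^i$; and that the correspondence is reversible. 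This bundle of verifications — especially extracting the row flag from the supports of the monotone factors and from the way a decreasing (resp.\ increasing) skew element sits inside $\la/\mu$ — is the step I expect to be the main obstacle, while expanding the operator, iterating Proposition \ref{skewfactorA}, and tracking the power of $\be$ and the monomial are routine. As an alternative organization, one can first prove the one-variable tableau formulas for $\AG_{\om_{\nu/\mu}}(X)$ and $\AG_{\om_{\la/\rho}^{-1}}(Y)$ by the same method and then combine them through Corollary \ref{Afact}, gluing the two tableaux along the rook strip $\nu/\rho$.
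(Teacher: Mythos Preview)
Your proposal is correct and follows essentially the same route as the paper: expand \eqref{dbleA} into a sum over Hecke factorizations with decreasing/increasing factors, iterate Proposition~\ref{skewfactorA} (together with its converse) to translate each factorization into the pair of partition chains encoding a set-valued $m$-bitableau, and then verify the row-flag bounds. The paper likewise isolates the flag verification as the nontrivial step and dispatches it by citing \cite[Thm.~1]{T6}, so your identification of that as the main obstacle matches exactly; the only cosmetic difference is that you peel factors from the left (building a chain descending from $\la$) whereas the paper indexes from the right (ascending from $\mu$).
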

\begin{proof}
It follows from formula (\ref{dbleA}) that we have 
\[
\AG_\om(X,Y) = \sum \be^{\ell(u,v,\om)}
y_{n-1}^{\ell(v_{n-1})}\cdots y_1^{\ell(v_1)} x_1^{\ell(u_1)}\cdots x_{n-1}^{\ell(u_{n-1})}
\]
where the sum is over all Hecke factorizations $v_{n-1}\di\cdots \di
v_1\di u_1\di\cdots \di u_{n-1}$ of $\om$ such that $v_p$ is
increasing up from $p$ and $u_p$ is decreasing down to $p$ for each
$p\in [1,n-1]$, while $\ell(u,v,\om):=\sum_i\ell(u_i)+\sum_i\ell(v_i)
- \ell(\om)$.
Proposition \ref{skewfactorA} shows that such
factorizations correspond to pairs of sequences of partitions
\[
\mu = \la^0 \subset \la^1 \subset \cdots \subset \la^{2n-2} =\la
\quad \mathrm{and} \quad
\mu = \nu^0, \nu^1, \ldots, \nu^{2n-2} = \la
\]
such that (i) $\mu\subset\nu^i\subset\la^i$ and $\la^i/\nu^i$ is a rook strip
for each $i$, and (ii) $\la^i/\nu^{i-1}$ is a horizontal strip for
$1\leq i\leq n-1$ and a vertical strip for $n\leq i\leq 2n-2$. We have
$u_{n-i}=\om_{\la^i/\nu^{i-1}}$, for $i\in [1,n-1]$, and
$v_{i+1-n}=\om_{\la^i/\nu^{i-1}}$, for $i\in [n,2n-2]$. Note that some factors
in the product $v_{n-1}\di\cdots \di v_1\di u_1\di\cdots \di u_{n-1}$ may be
trivial, and in this case the associated skew diagram $\la^i/\nu^{i-1}$ is
empty. We obtain a corresponding filling $U$ of the boxes in $\la/\mu$ by
including the entry $(n-i)'$ in each box of $\la^i/\nu^{i-1}$ for
$1\leq i\leq n-1$ and the entry $i+1-n$ in each box of
$\la^i/\nu^{i-1}$ for $n\leq i\leq 2n-2$. The required bounds on these
entries are established exactly as in the proof of \cite[Thm.\ 1]{T6},
showing that $U$ is a set-valued $m$-bitableau of shape $\la/\mu$ such
that $(xy)^U=y_{n-1}^{\ell(v_{n-1})}\cdots
y_1^{\ell(v_1)}x_1^{\ell(u_1)}\cdots x_{n-1}^{\ell(u_{n-1})}$ and
$|U|=\sum_i\ell(u_i)+\sum_i\ell(v_i)$. Since the sum in equation
(\ref{AGteq}) is over all such $U$, the result follows.
\end{proof}

\begin{example}
For each positive integer $m$, the permutation $s_m$ of length one can be
viewed as the $m$-Grassmannian element $\om_\la$ associated to the partition
$\la=1$. Theorem \ref{Askew} therefore gives 
\[
\AG_{s_m}(X,Y) = \sum_E \beta^{|E|-1}x_1^{e_1}\cdots x_m^{e_m}y_1^{e_{m+1}}
\cdots y_m^{e_{2m}}
\]
summed over all $4^m-1$ vectors $E=(e_1,\ldots,e_{2m})$ with $e_i\in \{0,1\}$
for each $i$ and $|E|:=e_1+\cdots + e_{2m}>0$. This corrects an error in
\cite[Example 1]{BKTY}.
\end{example}

\begin{example}
\label{skstabgr}
Let $\om_{\la/\mu}$ be a skew element of $S_\infty$. Extend the
alphabet {\bf P} to include all marked and unmarked integers, and omit
the bounds on the entries of the $m$-bitableaux found in Definition
\ref{AGtdef}.  Then the right hand side of equation (\ref{AGteq})
gives a tableau formula for the {\em skew stable Grothendieck polynomial}
$G_{\la/\mu}(X,Y)$ of Fomin and Kirillov \cite{FK1}. This result was
indicated by Buch in \cite[Remark 3.3]{B}.
\end{example}

\section{Tableau formula for type C skew Grothendieck polynomials}
\label{tCt}

In this section, we assume that all Grassmannian
and skew elements are taken with respect to a fixed nonnegative integer $k$.

Let $\la$ and $\mu$ be any two $k$-strict partitions such that
$(\la,\mu)$ is a compatible pair and let $w_{\la/\mu}=w_\la
w_\mu^{-1}$ be the corresponding skew element of $W_\infty$. According
to \cite[Prop.\ 5]{T1}, $w_{\la/\mu}$ is unimodal if and only if
$\la/\mu$ is a $k$-horizontal strip, and in this case we have
$n(w_{\la/\mu})=n(\la/\mu)$. If $w_{\la/\mu}$ is decreasing down to
$0$ (resp.\ increasing up from $0$), then we say that the
$k$-horizontal strip $\la/\mu$ is a {\em $z$-strip} (resp.\ {\em
  $\ov{z}$-strip}). If we furthermore have $w_{\la/\mu}\in S_\infty$,
we also say that $\la/\mu$ is an {\em $x$-strip} (resp.\ {\em
  $y$-strip}). A $z$-strip (resp.\ $\ov{z}$-strip) is an $x$-strip
(resp.\ $y$-strip) if and only if $\ell_k(\la)=\ell_k(\mu)$.

\begin{example}
The shaded boxes in the Young diagrams below
illustrate two $k$-horizontal strips with $k=6$. The one
on the left is a $z$-strip and the one on the right is both
a $\ov{z}$-strip and a $y$-strip.
\medskip
\[
\includegraphics[scale=0.32]{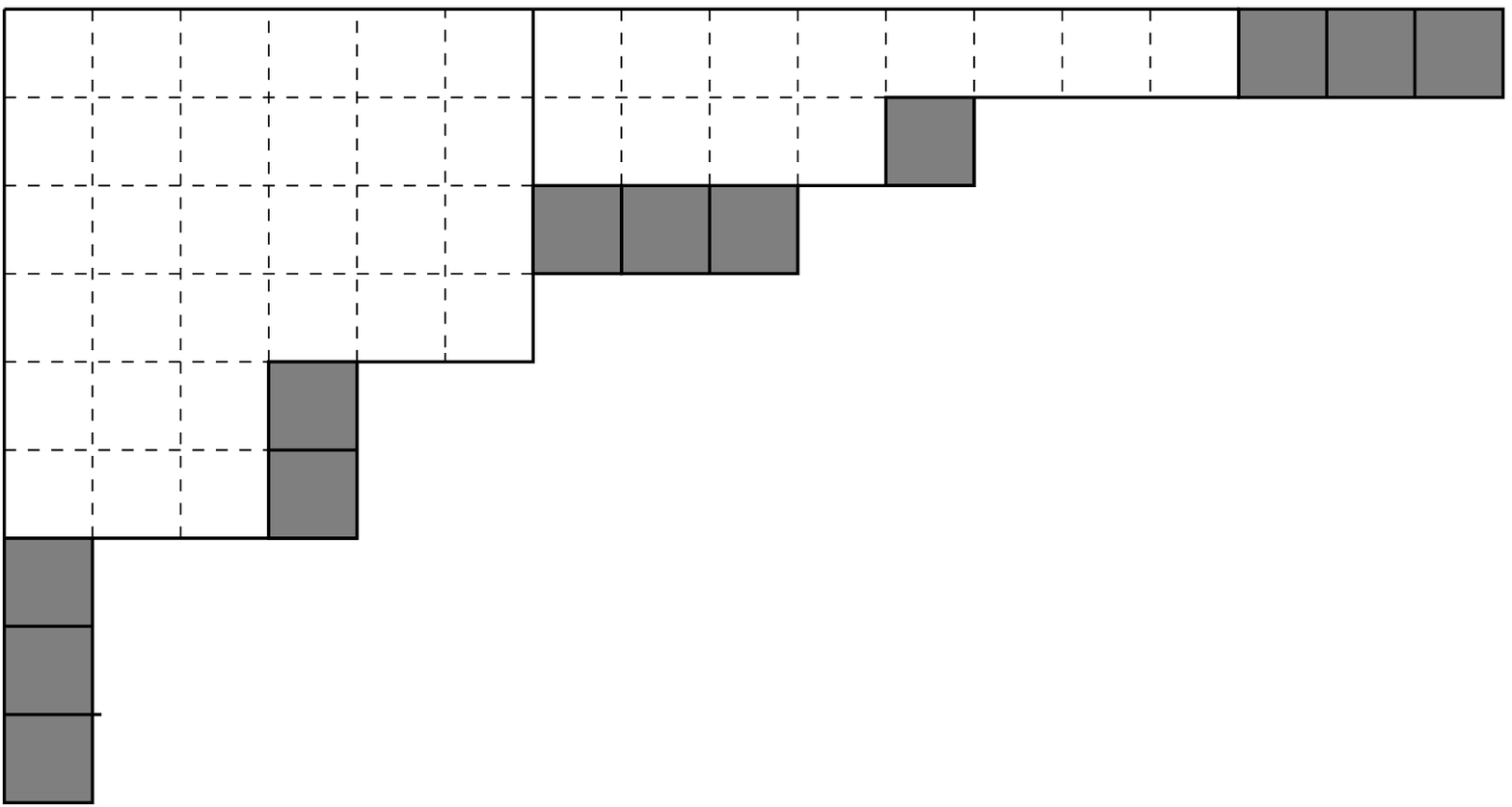}
\qquad \
\includegraphics[scale=0.32]{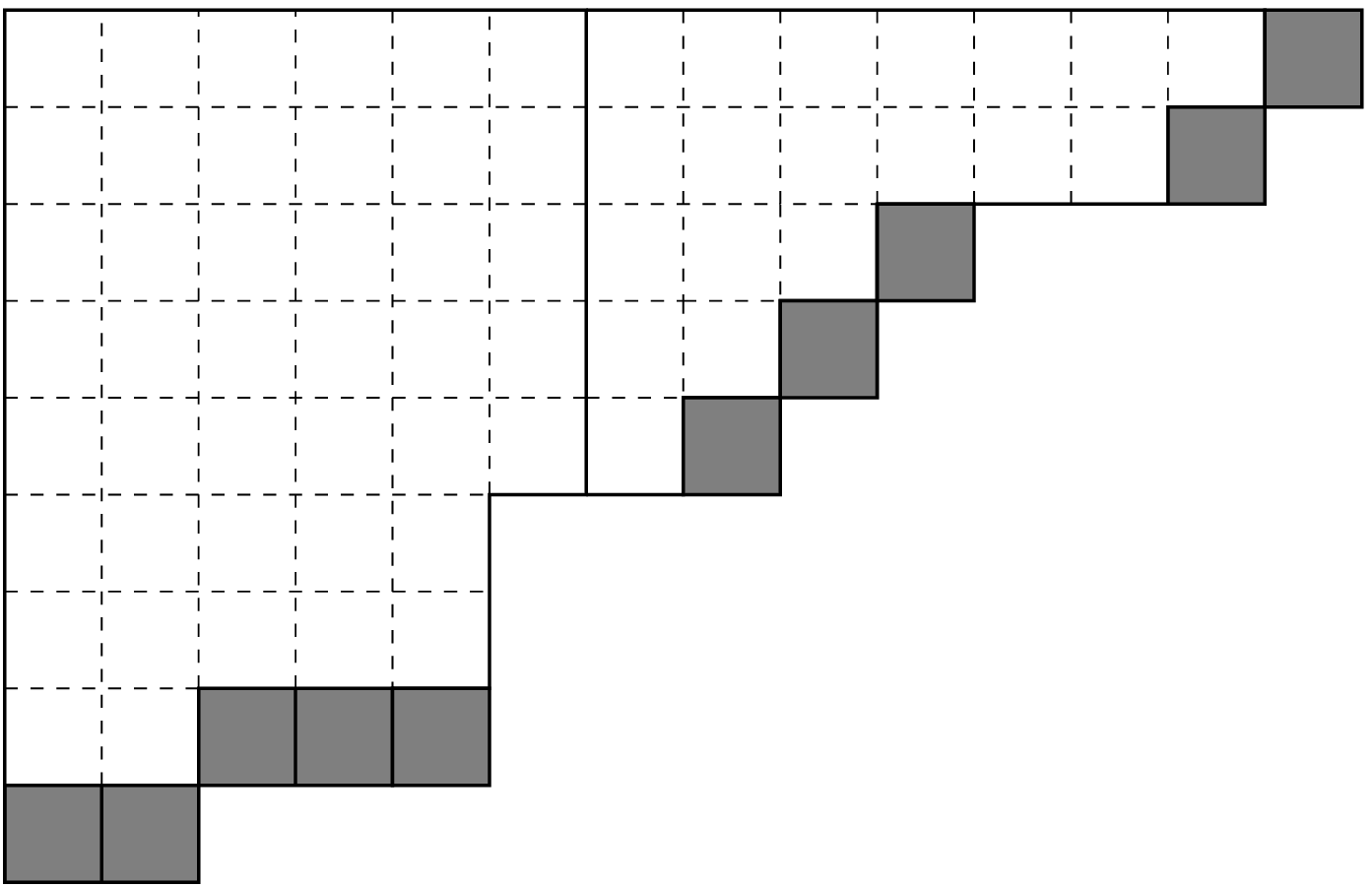}
\]
\end{example}
\smallskip

The $z$- and $\ov{z}$-strips are characterized among all
$k$-horizontal strips as follows.

\begin{prop}
\label{khoriz}
A $k$-horizontal strip $\la/\mu$ is a $z$-strip
(resp.\ $\ov{z}$-strip) if and only if the left boxes in $\la/\mu$
form a vertical strip (resp.\ horizontal strip), and no two boxes in
$\la/\mu$ are $k'$-related (resp.\ no two right boxes in $\la/\mu$ are
in the same row).
\end{prop}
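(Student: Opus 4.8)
The plan is to analyze the one-line notation of the $k$-Grassmannian elements $w_\la$ and $w_\mu$ and translate the two reduced-word conditions defining $z$-strips and $\ov z$-strips into the stated geometric conditions on the skew diagram $\la/\mu$. I would start from the combinatorial dictionary recalled in Section \ref{typesBC}: the negative entries $w_{k+i}<0$ correspond to parts $\la_i=k+|w_{k+i}|>k$ (the right boxes in row $i$ that extend past column $k$), while the positive entries record how $w_j$ for $j\in[1,k]$ interleave with the $w_{k+i}$, which is exactly the data of the left boxes. Since $w_{\la/\mu}=w_\la w_\mu^{-1}$ and $\la/\mu$ is already assumed to be a $k$-horizontal strip (so $w_{\la/\mu}$ is unimodal by \cite[Prop.\ 5]{T1}), the element $w_{\la/\mu}$ has a unique reduced word that is a subword of $\Omega^B$; being decreasing down to $0$ (the $z$-strip case) means this subword is of the form $a_1>\cdots>a_r\ge 0$, i.e.\ it lies in the \emph{left half} $(n-1,\ldots,1,0)$ of $\Omega^B$ and is strictly decreasing, while being increasing up from $0$ (the $\ov z$-strip case) means it lies in the \emph{right half} $(0,1,\ldots,n-1)$ and is strictly increasing.

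Next I would separate the contributions of left boxes and right boxes. The passage from $w_\mu$ to $w_\la$ corresponds to a sequence of simple reflections; a reflection $s_i$ with $i>0$ records a left box being added in row where $w_j$ and $w_{k+i'}$ swap (or two right boxes in adjacent rows shifting), and $s_0$ records a sign change, i.e.\ the appearance of a new left box reaching across column $k$ or a part crossing the value $k$. For a $z$-strip, the reduced word is \emph{strictly} decreasing and hits each index at most once; I would argue that ``each index $i\ge 1$ used at most once'' forces the left boxes of $\la/\mu$ to occupy distinct rows with no repetition pattern — i.e.\ the left boxes form a vertical strip — and that ``decreasing, using $0$ at most once (and last)'' is exactly the obstruction that prevents two boxes of $\la/\mu$ from being $k'$-related, since a $k'$-relation between two boxes of the strip would force the reduced word to repeat an index or fail to be decreasing. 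Symmetrically, for an $\ov z$-strip, the strictly increasing word forces the left boxes to form a horizontal strip, and the failure of two \emph{right} boxes of $\la/\mu$ to lie in the same row is the precise translation of increasingness among the positive-index letters. The cleanest way to package both directions is to set up the correspondence between letters of the (unique) reduced word and boxes of $\la/\mu$ explicitly — essentially the ``amenable''/path description of $w_{\la/\mu}$ from \cite{T1} — and then observe that ``strictly monotone subword of the appropriate half of $\Omega^B$'' $\Longleftrightarrow$ the two combinatorial conditions, checking that each condition (left-box vertical/horizontal strip; no $k'$-related pair / no two right boxes in a row) corresponds to one feature of the word (no repeated index in the $i\ge 1$ part; monotonicity across the $s_0$ hinge).

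For the converse, I would assume the geometric conditions and reconstruct the reduced word directly: read the boxes of $\la/\mu$ in the order dictated by the $k'$-diagonals, use the left-box strip condition to see that positive-index letters appear with the required monotonicity and multiplicity one, and use the ``no $k'$-related pair'' (resp.\ ``no two right boxes in a row'') condition to see that the word genuinely decreases through $0$ (resp.\ increases out of $0$) rather than being forced to backtrack; this exhibits a reduced word that is a subword of $(n-1,\ldots,1,0)$ (resp.\ $(0,1,\ldots,n-1)$), which is the definition of decreasing down to $0$ (resp.\ increasing up from $0$). The main obstacle I anticipate is the bookkeeping at column $k+1$ and the $s_0$ letter: the interplay between a part crossing the value $k$ (a right box entering the rim near column $k+1$) and the sign changes is delicate, and this is exactly where the $k'$-related condition and the ``right boxes in the same row'' condition do their work; I would expect to need the detailed description of the inverse map $\la\mapsto w_\la$ from \cite[Sec.\ 3.1]{T6} and the characterization of $k$-horizontal strips (conditions (a), (b), (c) and the sets $R$, $\A$) to pin down precisely which of the two halves of $\Omega^B$ the unique reduced word sits in.
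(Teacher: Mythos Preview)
Your approach is genuinely different from the paper's and considerably more involved. You attempt a direct global dictionary between the letters of the reduced word of $w_{\la/\mu}$ and the boxes of $\la/\mu$, and then translate monotonicity of the word into the stated geometric conditions. The paper instead argues by \emph{reduction to length two}: an element fails to be decreasing down to $0$ (resp.\ increasing up from $0$) if and only if some reduced word contains $i-1,\,i$ (resp.\ $i,\,i-1$) as a subword for some $i\ge 1$, and in any reduced factorization $w_{\la/\mu}=u\cdot s_{i-1}s_i\cdot v$ (resp.\ $u\cdot s_is_{i-1}\cdot v$) the outer factors $u,v$ are themselves skew, indexing $k$-horizontal substrips of $\la/\mu$. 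This localizes the whole problem to classifying the two-box skew diagrams $\la/\mu$ with $w_{\la/\mu}\in\{s_{i-1}s_i,\ s_is_{i-1}\}$; the case $i\ge 2$ is already handled in \cite[Prop.~1]{T6}, and only the case $i=1$ requires a short one-line-notation check.

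Your plan could in principle be made to work, but the proposal is vague at exactly the delicate points. The asserted equivalence ``each index $i\ge 1$ used at most once $\Longleftrightarrow$ left boxes form a vertical strip'' is not immediate (a decreasing word uses each index at most once automatically, so this does no work as stated; the content lies in how the specific indices present interact with the column positions of the left boxes), and the claim that monotonicity across the $s_0$ hinge is equivalent to the $k'$-related condition (resp.\ the same-row condition on right boxes) is precisely the crux and is not justified. Also, a unimodal $w_{\la/\mu}$ need not have a \emph{unique} reduced word which is a subword of $\Omega^B_n$, so your opening reduction to ``the'' subword is not quite right. The paper's length-two reduction sidesteps all of this bookkeeping: once the obstruction is local, inspecting a handful of two-box configurations proves both directions at once.
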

\begin{proof}
A signed permutation is decreasing down to $0$ (resp.\ increasing up
from $0$) if and only if it has no reduced word which contains $i-1,
i$ (resp.\ $i, i-1$) as a subword for some $i\geq 1$. Observe that in
any reduced factorization $w_{\la/\mu}=us_{i-1}s_iv$
(resp.\ $w_{\la/\mu}=us_is_{i-1}v$), the elements $u$ and $v$ are also
skew, and associated to $k$-horizontal strips which are substrips of
$\la/\mu$. Therefore we may assume that $w_{\la/\mu}\in \{s_{i-1}s_i,
s_is_{i-1}\}$ for some $i$ and study the associated skew diagram
$\la/\mu$. For $i\geq 2$, the argument is the same as in the proof of
\cite[Prop.\ 1]{T6}, so it remains to examine the case when $i=1$.

If $w_{\la/\mu}=s_0s_1$ then $w_\mu$ and $w_\la$ have the form
\[
w_\mu = (\cdots | \cdots \ov{1}2 \cdots),  \ \, 
w_\la = (\cdots | \cdots \ov{2}\,\ov{1} \cdots)
\]
so that $\la/\mu$ has two right boxes which are $k'$-related, or 
\[
w_\mu = (1 \cdots | \cdots 2 \cdots),  \ \, 
w_\la = (2 \cdots | \cdots \ov{1} \cdots)
\]
so that $\la/\mu$ has a left box and a right box which are
$k'$-related (and in the same row). Here the vertical line $|$
lies between positions $k$ and $k+1$. On the other hand,
if $w_{\la/\mu}=s_1s_0$ then $w_\mu$ and $w_\la$ have the form
\[
w_\mu = (2 \cdots | \cdots 1 \cdots)
\ \, \text{or}
\ \, w_\mu = (\cdots | \cdots 12 \cdots),  \ \, 
w_\la = (\cdots | \cdots \ov{2} \cdots)
\]
so that $\la/\mu$ has two right boxes which are in the same row
(and in columns $k+1$ and $k+2$). The converse statements are proved
by using the correspondence between $\nu$ and $w_\nu$ given in Section
\ref{typesBC}. This completes the proof.
\end{proof}

\begin{defn}
\label{krook}
For any pair $\la,\mu$ of $k$-strict partitions with $\mu\subset \la$,
we say that the skew Young diagram $\la/\mu$ is a {\em $k$-rook strip}
if it contains no two boxes in the same row or column, and no two
right boxes which are $k$-related.
\end{defn}

Observe that the skew diagram $\la/\mu$ is a $k$-rook strip if and
only if $\la\ssm b$ is the diagram of a $k$-strict partition for each
box $b$ in $\la/\mu$. The direct analogue of Proposition
\ref{skewfactorA} does not hold in types B, C, or D, as the next
example illustrates.

\begin{example}
Let $\la=\nu:=(4,1)$, $\rho:=2$, and $\mu:=\emptyset$, so that
$\mu\subset\rho\subset\nu\subset\la$, $w_\la=s_1s_2s_1s_0s_1$,
and $w_\rho=s_0s_1$. If $u:=s_1s_2s_1$ and $v:=w_\la$, then $w_{\la/\mu}=u\di v$ and we
have $u=w_{\la/\rho}$ and $v=w_{\nu/\mu}$. However, $\nu/\rho=(4,1)/2$ is not a
$k$-rook strip. Notice that neither $u$ nor $v$ is fully commutative in the sense of
\cite{St}.
\end{example}

In place of Proposition \ref{skewfactorA} and Corollary \ref{Afact}, we have the
following weaker results.

\begin{prop}
\label{skewfactorC}
Suppose that $w_{\la/\mu}=u\di v$ for some skew element $w_{\la/\mu}$
of $W_\infty$. Then there exist $k$-strict partitions $\rho$ and $\nu$
such that $\mu\subset\rho\subset\la$ and $\mu\subset\nu\subset\la$
are compatible triples, $u=w_{\la/\rho}$, and $v=w_{\nu/\mu}$.
\end{prop}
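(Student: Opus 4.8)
The plan is to mimic the type A argument of Proposition \ref{skewfactorA}, but to settle for the weaker conclusion that only \emph{some} pair $\rho,\nu$ works, dropping the claim that $\nu/\rho$ is a rook strip and the full ``$\mu\subset\rho\subset\nu\subset\la$'' nesting. As in the type A proof, the existence of $\nu$ is immediate: writing $u=s_{a_1}\cdots s_{a_r}$ reduced, we have $w_{\la/\mu}=s_{a_1}\di\cdots\di s_{a_r}\di v$, and since $s_i\di v$ is always either $v$ or the reduced product $s_iv$, a reduced factorization $w_{\la/\mu}=w'v$ exists; by the bijection of Section \ref{typesBC} between reduced factorizations of $w_{\la/\mu}$ and compatible triples, this gives $v=w_{\nu/\mu}$ with $\mu\subset\nu\subset\la$ a compatible triple. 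By the left--right symmetry of the Hecke product (apply the same reasoning to inverses, since $w_{\la/\mu}^{-1}$ is again skew, or more directly observe that $u\di v=w_{\la/\mu}$ means $\vp_u\vp_v$ is a power of $\be$ times $\vp_{w_{\la/\mu}}$ and truncate on the left), there likewise exists a reduced factorization $w_{\la/\mu}=u\,w''$, whence $u=w_{\la/\rho}$ with $\mu\subset\rho\subset\la$ a compatible triple.

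The only real content is therefore that these two statements can be achieved \emph{simultaneously} with the \emph{same} $u$ and $v$ that were given. Here I would proceed by induction on $\ell(u)$, exactly parallel to the type A proof but without tracking rook-strip structure. For $\ell(u)=1$, $u=s_i$ and $w_{\la/\mu}=s_i\di w_{\nu/\mu}$; if this product is reduced, set $\rho:=\nu$; if not, then $\nu=\la$ and $s_iw_\la=w_\rho$ is reduced for some $k$-strict $\rho$ with $\mu\subset\rho\subset\nu$ (this is where the identification $\la/\mu$-skew $\leftrightarrow$ compatible pair, and the fact that $s_i$ acting on a $k$-Grassmannian one-line notation either lengthens or produces another $k$-Grassmannian element of smaller $k$-strict shape, is used), and then $u=s_i=w_{\la/\rho}$ works. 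For the inductive step write $u=s_iu'$ with $\ell(u)=\ell(u')+1$, so $w_{\la/\mu}=s_i\di u'\di w_{\nu/\mu}$. As before $u'\di w_{\nu/\mu}=w_{\zeta/\mu}$ for some $k$-strict $\zeta$ with $\mu\subset\zeta\subset\la$ a compatible triple, and $s_i\di w_\zeta=w_\la$; by the inductive hypothesis applied to $w_{\zeta/\mu}=u'\di w_{\nu/\mu}$ we get $k$-strict $\rho$ with $\mu\subset\rho\subset\zeta$ a compatible triple and $u'=w_{\zeta/\rho}$. If $s_iw_\zeta=w_\la$ is reduced we are done with this $\rho$. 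If $\zeta=\la$, then $s_iw_\la=w_{\zeta'}$ for some $k$-strict $\zeta'$ with $\mu\subset$, and I want to ``subtract'' the corresponding box (or, in the $i=0$ or $s_\Box$ cases, pair of boxes) from $\rho$ to obtain the $\wt\rho$ with $u=w_{\la/\wt\rho}$.

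The main obstacle is exactly this last case, and it is where types B, C, D genuinely differ from type A: the descent $s_iw_\la\lessdot w_\la$ need not simply remove one box from $\la$, and the commutation ``$s_iw_{\la/\rho}=w_{\la/\rho}s_i$'' used in type A is not available because Grassmannian elements in these types are not fully commutative (cf.\ the Example preceding the Proposition). I would handle this by a direct manipulation at the level of one-line notation of signed permutations: from $\ell(s_iw_\la)<\ell(w_\la)$ one reads off the positions where $w_\la$ has the relevant adjacent-value pattern, and from $\ell(s_iu')=\ell(u')+1$ (i.e.\ $\ell(s_iw_{\la/\rho})>\ell(w_{\la/\rho})$) one deduces, as in the parenthetical argument in the type A proof, that $w_\rho$ must exhibit the same pattern in the same positions; hence $s_iw_\rho=w_{\wt\rho}$ is reduced for a $k$-strict $\wt\rho$ with $\mu\subset\wt\rho\subset\rho$, and $(\la,\wt\rho)$, $(\wt\rho,\mu)$ remain compatible because deleting a box that sits in $\rho$ from both $\la$ and $\rho$ preserves $k$-strictness and the Grassmannian-descent structure. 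Then $w_{\la/\wt\rho}\di w_{\wt\rho/\mu}=w_{\la/\mu}$ forces, via the bijection, $w_{\la/\wt\rho}=u$ (one checks $\ell$ adds up: $\ell(w_{\la/\wt\rho})=|\la/\wt\rho|=|\la/\rho|+1=\ell(u')+1=\ell(u)$, and $w_{\la/\wt\rho}\di w_{\nu/\mu}=w_{\la/\wt\rho}\di w_{\wt\rho/\rho}\di\cdots$ traces back to $s_i\di u'\di w_{\nu/\mu}=w_{\la/\mu}$). The cases $i\geq 2$ can be quoted from \cite[Prop.\ 1]{T6} / \cite[Prop.\ 5]{T1} as in the proof of Proposition \ref{khoriz}; the cases $i=0$ (type C/B) and $i\in\{\Box,1\}$ (type D, for Proposition \ref{skewfactorD}) require the explicit two-box bookkeeping analogous to that in the proof of Proposition \ref{khoriz}, and that is the one genuinely type-dependent computation I expect to have to spell out.
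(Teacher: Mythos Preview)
Your first paragraph is already the entire proof, and it matches the paper's argument almost verbatim. The proposition asserts no relation whatsoever between $\rho$ and $\nu$: it does not claim $\rho\subset\nu$, nor that $\nu/\rho$ is a $k$-rook strip. Hence there is no ``simultaneity'' issue to resolve. You find $\nu$ by peeling the letters of $u$ off the left of $u\di v$ to obtain a reduced factorization $w_{\la/\mu}=w'v$, and you find $\rho$ by the mirror argument: peel the letters of $v$ off the right (using $w\di s_i\in\{w,ws_i\}$) to obtain a reduced factorization $w_{\la/\mu}=uw''$, so that $w''=w_{\rho/\mu}$ and $u=w_{\la/\rho}$. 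That is exactly the paper's ``Similarly'' sentence, and the proof ends there.

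Everything from your second paragraph onward is unnecessary for this proposition. The induction on $\ell(u)$, the $\zeta=\la$ case analysis, and the type-dependent box bookkeeping you anticipate are not needed, because you never have to relate $\rho$ to $\nu$. Those ideas \emph{are} relevant later, in Propositions~\ref{skewfactC} and~\ref{skewCconverse}, where the extra hypothesis that $u$ is monotone is what allows one to recover a $k$-rook-strip relation between $\nu$ and $\rho$; you appear to be conflating the two statements. One small caution on your first paragraph: the parenthetical suggestion to pass to inverses is not safe, since $w_{\la/\mu}^{-1}=w_\mu w_\la^{-1}$ is not obviously a skew element in the sense defined here; the direct right-sided peeling is the argument that works, and it is the one the paper uses.
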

\begin{proof}
Suppose that $a_1 \cdots a_r$ is a reduced word for $u$, so that
$w_{\la/\mu} = s_{a_1}\di\cdots \di s_{a_r} \di v$. Since $s_i\di v$
is equal to $v$ or to the reduced product $s_iv$ for any $i$, it is
clear that there exists a reduced factorization $w_{\la/\mu}= w'v$ for
some $w'$, and hence that $v=w_{\nu/\mu}$ for some $k$-strict
partition $\nu$ with $\mu\subset\nu\subset\la$. Moreover, we have
$w'=w_{\la/\nu}$ and the pairs $(\la,\nu)$ and $(\nu,\mu)$ are
compatible.  Similarly, we see that there exists a reduced
factorization $w_{\la/\mu}= uw''$ for some $w''$, and hence that
$u=w_{\la/\rho}$ for some $k$-strict partition $\rho$ with
$\mu\subset\rho\subset\la$, such that the pairs $(\la,\rho)$ and
$(\rho,\mu)$ are compatible.
\end{proof}

\begin{cor}
\label{Cfact}
For the skew element $w_{\la/\mu}\in W_\infty$, we have
\[
\CG_{w_{\la/\mu}}(Z;X,Y) = \sum_{\rho,\nu} \CG_{w_{\nu/\mu}}(Z;X)\AG_{w_{\la/\rho}^{-1}}(Y)
\]
summed over all $k$-strict partitions $\rho$ and $\nu$ such that
$\mu\subset\rho\subset\la$ and $\mu\subset\nu\subset\la$ are
compatible triples, $\ell_k(\rho)=\ell_k(\la)$, and
$w_{\la/\mu}=w_{\la/\rho}\di w_{\nu/\mu}$.
\end{cor}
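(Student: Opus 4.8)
The plan is to mimic the derivation of Corollary \ref{Afact} from Proposition \ref{skewfactorA}, but now starting from the definition (\ref{dbleC}) of $\CG_w$ and using the weaker factorization result Proposition \ref{skewfactorC}. First I would observe that the definition (\ref{dbleC}), together with the fact that the $A'$-factors involve only the $Y$ variables, the $A$-factors only the $X$ variables, and $C(Z)$ only the $Z$ variables, yields the expansion
\[
\CG_w(Z;X,Y) = \sum_{u\di v = w} \CG_v(Z;X)\,\AG_{u^{-1}}(Y),
\]
summed over all Hecke factorizations $u\di v$ of $w$; here I use that $\langle A'_{n-1}(y_{n-1})\cdots A'_1(y_1),\,\omega\rangle = \AG_{\omega^{-1}}(Y)$ for $\omega\in S_\infty$ (the $Y$-part of (\ref{dbleA}) read backwards) and that $\langle C(Z)A_1(x_1)\cdots A_{n-1}(x_{n-1}),\,w'\rangle = \CG_{w'}(Z;X)$. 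This is the exact analogue of the identity displayed at the start of the proof of Corollary \ref{Afact}.

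Next I would bring in Proposition \ref{skewfactorC}: for a skew element $w_{\la/\mu}$, every Hecke factorization $w_{\la/\mu} = u\di v$ has $u = w_{\la/\rho}$ and $v = w_{\nu/\mu}$ for $k$-strict partitions $\rho,\nu$ with $\mu\subset\rho\subset\la$ and $\mu\subset\nu\subset\la$ compatible triples. Two points need care. First, the left factor $u^{-1} = w_{\la/\rho}^{-1}$ must actually lie in $S_\infty$, since $\AG$ is only defined there; this forces the constraint $\ell_k(\rho) = \ell_k(\la)$, which is precisely the condition appearing in the statement (the $x$-strip/$y$-strip discussion before Proposition \ref{khoriz} shows that a skew element stays in $S_\infty$ iff the $k$-lengths of its two partitions agree). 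Second — and this is the key departure from the type A argument — Proposition \ref{skewfactorC} is not a bijection: distinct Hecke factorizations $u\di v$ may correspond to the same pair $(\rho,\nu)$, and conversely a pair $(\rho,\nu)$ coming from the proposition need not satisfy $w_{\la/\rho}\di w_{\nu/\mu} = w_{\la/\mu}$ (this is exactly what the example with $\la=\nu=(4,1)$, $\rho=2$ warns against). So the sum on the right must be taken only over those $(\rho,\nu)$ for which $w_{\la/\rho}\di w_{\nu/\mu} = w_{\la/\mu}$ actually holds, which is the final condition in the corollary's statement, and I must check that for each valid Hecke factorization the resulting pair is counted exactly once.

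For the uniqueness I would argue as follows: given $(\rho,\nu)$ with $w_{\la/\rho}\di w_{\nu/\mu} = w_{\la/\mu}$, the factors $u = w_{\la/\rho}$ and $v = w_{\nu/\mu}$ are determined by $\rho$ and $\nu$ (via the bijection between $k$-strict partitions and $k$-Grassmannian elements, extended to skew elements through the correspondences in Section \ref{gpps}); conversely a Hecke factorization $u\di v = w_{\la/\mu}$ determines $u$ and $v$, hence determines $\rho$ (from $u = w_{\la/\rho}$) and $\nu$ (from $v = w_{\nu/\mu}$) by Proposition \ref{skewfactorC}, so the correspondence between Hecke factorizations and admissible pairs $(\rho,\nu)$ is a bijection. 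Collecting terms in the displayed expansion of $\CG_{w_{\la/\mu}}$ according to this bijection, and discarding the pairs whose left factor leaves $S_\infty$ (these would make $\AG_{w_{\la/\rho}^{-1}}(Y)$ undefined, but they are exactly the pairs with $\ell_k(\rho) > \ell_k(\la)$, which we exclude), yields the stated formula. The main obstacle is precisely this bookkeeping: unlike the type A case, the weaker Proposition \ref{skewfactorC} does not by itself pin down the summation range, so the content of the corollary is really the observation that imposing the two side conditions $\ell_k(\rho)=\ell_k(\la)$ and $w_{\la/\mu}=w_{\la/\rho}\di w_{\nu/\mu}$ restores the correct (bijective) indexing of the Hecke factorizations.
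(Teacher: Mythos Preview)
Your approach is correct and essentially identical to the paper's: expand $\CG_w$ via the definition into a sum over Hecke factorizations $u\di v=w$ with $u\in S_\infty$, then invoke Proposition~\ref{skewfactorC} together with the equivalence $w_{\la/\rho}\in S_\infty\Leftrightarrow\ell_k(\rho)=\ell_k(\la)$. Two minor slips to clean up: the restriction to $u\in S_\infty$ holds because the factors $A'_i(y_i)$ involve only $\vp_1,\ldots,\vp_{n-1}$ and hence contribute no $\vp_0$, not because ``$\AG_{u^{-1}}$ would otherwise be undefined'' (one cannot discard terms from a sum on the grounds that they are ill-defined); and since $\rho\subset\la$ always forces $\ell_k(\rho)\leq\ell_k(\la)$, the excluded case is $\ell_k(\rho)<\ell_k(\la)$, not $\ell_k(\rho)>\ell_k(\la)$.
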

\begin{proof}
The definition (\ref{dbleC}) of $\CG_w$ implies that
\[
\CG_w(Z;X,Y) = \sum_{u\di v =w} \CG_v(Z;X)\AG_{u^{-1}}(Y)
\]
summed over all Hecke factorizations $u\di v$ of $w$ with $u\in S_\infty$.
The result follows immediately from this, Proposition \ref{skewfactorC}, and the
fact that $w_{\la/\rho}\in S_{\infty}$ if and only if $\ell_k(\rho)=\ell_k(\la)$.
\end{proof}

Suppose that $\la$ is a $k$-strict partition and $i\geq 0$ is such
that $\ell(s_iw_\la)=\ell(w_\la)-1$. Then $s_iw_\la=w_\mu$ for a $k$-strict
partition $\mu\subset\la$, so that $\la/\mu$ is a $k$-horizontal strip
consisting of a single box $b$ of $\la$. We then say that $b$ is a
{\em removable box} of $\la$. It follows that a box $b$ of $\la$ is a
removable box of $\la$ if and only if (i) $\la\ssm b$ is a
partition, and (ii) if $b$ is a left box of $\la$, then $b$ is not
$k'$-related to two right boxes of $\la$ (including boxes in row zero)
which are both bottom boxes in their column.

\begin{example}
Let $k:=1$ and $\la:=(3,1)$, so that $w_\la = s_1s_0s_2s_1$. Then the
box of $\la$ in the third column is removable, but the box in the
second row is not removable. The diagram $(3,1)/3$ is a $1$-rook strip but
$((3,1),3)$ is not a compatible pair.
\end{example}

Similarly, suppose that $(\la,\mu)$ is a compatible pair of $k$-strict
partitions and $i\geq 0$ is such that
$\ell(s_iw_{\la/\mu})=\ell(w_{\la/\mu})-1$. Then
$s_iw_{\la/\mu}=w_{\nu/\mu}$ for a $k$-strict partition $\nu$ with
$\mu\subset\nu\subset\la$, so that $\la/\nu$ is a $k$-horizontal strip
consisting of a single box $b$ of $\la$, which is not in $\mu$. We
then say that $b$ is a {\em removable box} of $\lambda/\mu$.

\begin{example}
Let $k:=1$, $\la:=(4,1)$, and $\mu:=3$, so
$w_\la=s_1s_2s_1s_0s_1=s_2s_1s_2s_0s_1$, $w_\mu=s_1s_0s_1$, and
$w_{\la/\mu}=s_1s_2$. Then $\la$ has two removable boxes which are not
in $\mu$, but only the box $b:=(4,1)/4$ in the second row is a
removable box of $\la/\mu$.
\end{example}

\begin{prop}
\label{Cprop}
Suppose that $(\la,\mu)$ is a compatible pair of $k$-strict partitions,
and $b$ is a box of $\la$ which is not in $\mu$. Then $b$ is a removable
box of $\la/\mu$ if and only if $b$ is a removable box of $\la$ and
$(\la\ssm b,\mu)$ is a compatible pair.
\end{prop}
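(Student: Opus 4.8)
The plan is to prove both implications by translating the combinatorial conditions into statements about the skew Weyl group elements and using the Hecke product. Recall that $b$ is a removable box of $\la/\mu$ precisely when there is a simple reflection $s_i$ with $\ell(s_iw_{\la/\mu})=\ell(w_{\la/\mu})-1$ and $s_iw_{\la/\mu}=w_{\nu/\mu}$ where $\nu=\la\ssm b$; in particular this forces $(\la\ssm b,\mu)$ to be a compatible pair and $\la/\nu$ to be a single-box $k$-horizontal strip, so the box $b$ is removable from $\la$ in the sense defined just above Proposition \ref{Cprop}. This gives the forward direction almost immediately: if $b$ is removable for $\la/\mu$ then $(\la\ssm b,\mu)$ is compatible by definition, and since $\la/\nu$ being a single-box $k$-horizontal strip means $w_\la = s_iw_\nu$ is reduced with $\ell(s_iw_\la)=\ell(w_\la)-1$, the box $b$ is a removable box of $\la$.

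For the converse, I would start from the hypotheses that $b$ is a removable box of $\la$ (so $s_iw_\la=w_{\la\ssm b}$ reduced for the appropriate $i$) and that $(\la\ssm b,\mu)$ is a compatible pair, and aim to show $\ell(s_iw_{\la/\mu})=\ell(w_{\la/\mu})-1$ with $s_iw_{\la/\mu}=w_{(\la\ssm b)/\mu}$. Write $\nu:=\la\ssm b$. Since $(\la,\mu)$ and $(\nu,\mu)$ are both compatible, we have reduced factorizations $w_\la=w_{\la/\mu}w_\mu$ and $w_\nu=w_{\nu/\mu}w_\mu$. From $s_iw_\la=w_\nu$ (reduced) we get $s_iw_{\la/\mu}w_\mu=w_{\nu/\mu}w_\mu$, and comparing lengths, $\ell(s_iw_{\la/\mu})=\ell(w_\nu)-\ell(w_\mu)=|\nu/\mu|=|\la/\mu|-1=\ell(w_{\la/\mu})-1$; cancelling $w_\mu$ on the right (legitimate since both sides, after multiplication by $w_\mu$, are reduced and of the expected length) yields $s_iw_{\la/\mu}=w_{\nu/\mu}$. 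The one subtlety is that I must also confirm $\la/\nu$ is a $k$-horizontal strip consisting of the single box $b$, so that this chain of equalities is consistent with the definition of removable box of $\la/\mu$; but $\la/\nu$ is the single box $b$, and $b$ being removable from $\la$ means $\la\ssm b$ is a $k$-strict partition $k'$-related to at most the allowed left boxes, which is exactly condition for $\la/\nu$ to be a single-box $k$-horizontal strip.

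The main obstacle I anticipate is the length-cancellation step: knowing $\ell(s_iw_\la)<\ell(w_\la)$ does not a priori tell us whether $s_i$ interacts with the left factor $w_{\la/\mu}$ or is ``absorbed'' into $w_\mu$; I need to rule out the latter. The cleanest way is to observe that if $\ell(s_iw_{\la/\mu})=\ell(w_{\la/\mu})+1$ instead, then $s_iw_{\la/\mu}w_\mu$ would be a reduced product of length $\ell(w_\la)+1$, contradicting $\ell(s_iw_\la)=\ell(w_\la)-1$ once we note $s_iw_\la=s_iw_{\la/\mu}w_\mu$ and that $w_{\la/\mu}w_\mu$ is reduced; so $\ell(s_iw_{\la/\mu})<\ell(w_{\la/\mu})$ necessarily. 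Then $s_iw_{\la/\mu}$ has length $|\la/\mu|-1$, and multiplying by $w_\mu$ we get an element of length $\le |\la/\mu|-1+\ell(w_\mu)=\ell(w_\nu)$; but it equals $s_iw_\la=w_\nu$, forcing equality of lengths, hence $s_iw_{\la/\mu}\cdot w_\mu=w_\nu$ is reduced and $s_iw_{\la/\mu}=w_{\nu/\mu}$. I should double-check that the ``removable from $\la$'' characterization (conditions (i) and (ii) stated before the proposition) together with compatibility of $(\la\ssm b,\mu)$ does not need any additional compatibility hypothesis on $(\la,\mu)$ beyond what is assumed — but since $(\la,\mu)$ compatible is part of the standing hypothesis of the proposition, this is fine.
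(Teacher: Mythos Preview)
Your proof is correct and matches the paper's approach: both directions exploit the reduced factorizations $w_\la=w_{\la/\mu}w_\mu$ and $w_\nu=w_{\nu/\mu}w_\mu$ together with $s_iw_\la=w_\nu$ to pass between removability for $\la$ and for $\la/\mu$. Your ``obstacle'' discussion is superfluous (and its claim that $s_iw_{\la/\mu}\cdot w_\mu$ would automatically be reduced is not justified), since group cancellation already gives $s_iw_{\la/\mu}=w_{\nu/\mu}$ and hence the length drop; the paper phrases this step equivalently as $w_{\la/\mu}=w_{\la/\nu}w_{\nu/\mu}=s_iw_{\nu/\mu}$ via the compatible triple $\mu\subset\nu\subset\la$.
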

\begin{proof}
Suppose that $b$ is a removable box of $\la/\mu$ and let $i\geq 0$ be
such that $s_iw_{\la/\mu} = w_{\nu/\mu}$, with $\nu:=\la\ssm b$. Then
$(\nu,\mu)$ is a compatible pair and $s_iw_{\la}=s_iw_{\la/\mu}w_\mu =
w_{\nu/\mu}w_\mu = w_\nu$, so $b$ is a removable box of $\la$.
Conversely, suppose that $b$ is a removable box of $\la$, so that
$s_iw_\la=w_\nu$ with $\nu:=\la\ssm b$, and assume that $(\nu,\mu)$ is
a compatible pair. Since $(\la,\nu)$ is also compatible, we have
$w_{\la/\mu}=w_{\la/\nu}w_{\nu/\mu}=s_iw_{\nu/\mu}$, hence
$s_iw_{\la/\mu}=w_{\nu/\mu}$ and so $b$ is a removable box of
$\la/\mu$.
\end{proof}

The following proposition and its converse are the reason we can
achieve tableau formulas for Grothendieck polynomials in types B, C,
and D beyond the fully commutative Grassmannian examples of \cite{GK,
  IN}. The point is that although Proposition \ref{skewfactorA} fails
in these Lie types, an analogous result is true when the left factor
$u$ has a reduced word which is decreasing or increasing.

\begin{prop}
\label{skewfactC}
Suppose that $u$ is decreasing down to $0$ or increasing up from $0$
and $w_{\la/\mu}=u\di v$ for some skew element $w_{\la/\mu}$. Then
there exist $k$-strict partitions $\rho\subset\nu$ such that
$\mu\subset\rho\subset\la$ and $\mu\subset\nu\subset\la$ are
compatible triples, $\nu/\rho$ is a $k$-rook strip, $u=w_{\la/\rho}$,
and $v=w_{\nu/\mu}$.
\end{prop}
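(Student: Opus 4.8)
The plan is to mimic the induction on $\ell(u)$ used in the proof of Proposition \ref{skewfactorA}, but to exploit the hypothesis that $u$ has a \emph{decreasing} (or \emph{increasing}) reduced word so that we never fall outside the unimodal/compatible-pair framework available in types B and C. First I would handle the decreasing case; the increasing case follows by a symmetric argument (or by applying an inverse/reversal of words). So write $u=s_{a_1}\cdots s_{a_r}$ with $a_1>\cdots>a_r\geq 0$ and $r=\ell(u)$, and set $u'=s_{a_2}\cdots s_{a_r}$, which is still decreasing down to $0$, with $\ell(u')=r-1$. As in the proof of Proposition \ref{skewfactorC}, peeling off simple reflections on the right of $v$ first shows that $v=w_{\nu/\mu}$ for some $k$-strict $\nu$ with $\mu\subset\nu\subset\la$ and $(\la,\nu)$, $(\nu,\mu)$ compatible; so throughout we may write $w_{\la/\mu}=u\di w_{\nu/\mu}$ with $w_{\la/\nu}=w'$ the complementary skew element.

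The base case $\ell(u)=1$ is essentially the definitions: $u=s_i$, and either $s_i\di w_{\nu/\mu}=s_iw_{\nu/\mu}$ is reduced, in which case take $\rho:=\nu$ (and $\nu/\rho$ empty, trivially a $k$-rook strip); or $s_i\di w_{\nu/\mu}=w_{\nu/\mu}$, forcing $\nu=\la$ and $\ell(s_iw_\la)=\ell(w_\la)-1$, so by the discussion preceding Proposition \ref{Cprop} there is a removable box $b$ of $\la$ with $s_iw_\la=w_{\la\ssm b}$; set $\rho:=\la\ssm b$, so $\nu/\rho=\{b\}$ is a $k$-rook strip and $u=s_i=w_{\la/\rho}$. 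For the inductive step, write $u=s_i\di u'$ with $i=a_1$ and apply the inductive hypothesis to $u'\di w_{\nu/\mu}$: by the argument in Proposition \ref{skewfactorC} this product equals $w_{\zeta/\mu}$ for some $k$-strict $\zeta$ with $\mu\subset\zeta\subset\la$ and all intervening pairs compatible, so that $s_i\di w_\zeta=w_\la$, and by induction $u'=w_{\zeta/\rho}$ for a $k$-strict $\rho$ with $\mu\subset\rho\subset\la$ compatible triples and $\zeta/\rho$ a $k$-rook strip. Now split on whether $s_iw_\zeta=w_\la$ is reduced. If it is, then $u=s_iu'=w_{\la/\rho}$ is the reduced product $w_{\la/\zeta}w_{\zeta/\rho}$ and we are done with $\nu$ as already found and $\rho$ unchanged; the $k$-rook-strip property of $\nu/\rho$ (not just $\zeta/\rho$) needs a small check, since $\nu$ may differ from $\zeta$, but here $\zeta=\la$ is excluded so $\zeta\subsetneq\la$ and one argues as in the final paragraph of the proof of Proposition \ref{skewfactorA} that the single box being added stays isolated.

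The remaining (and main) case is $\zeta=\la$, i.e.\ $\ell(s_iw_\la)=\ell(w_\la)-1$, so $s_iw_\la=w_{\la\ssm b}$ for a removable box $b$ of $\la$ (using the characterization of removable boxes before Proposition \ref{Cprop}). The crux is to show that $b\in\rho$: if not, then $\rho\subset\la\ssm b$, and since $(\la,\rho)$ is compatible one would get (using Proposition \ref{Cprop} and the characterization of removable boxes of $\la/\rho$) a reduced factorization $w_{\la\ssm b}=w_{(\la\ssm b)/\rho}w_\rho$ with $u'=w_{\la/\rho}=s_i\di w_{(\la\ssm b)/\rho}$, forcing $\ell(s_iu')=\ell(u')-1$, contradicting $\ell(u)=\ell(u')+1$. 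Hence $b\in\rho$; put $\wt\rho:=\rho\ssm b$. One then checks $\wt\rho$ is $k$-strict and $(\la,\wt\rho)$, $(\wt\rho,\mu)$ are compatible (this is where we use that $\zeta/\rho$ was a $k$-rook strip so removing $b$ from $\rho$ behaves well, together with Proposition \ref{Cprop} applied at the level of $\rho$), that $s_iw_{\wt\rho}=w_\rho$, and that $s_i$ commutes past $w_{\la/\rho}=u'$ exactly as in the proof of Proposition \ref{skewfactorA}, because a decreasing reduced word $a_1>\cdots>a_r$ for $u'$ avoids $i-1,i,i+1$ given that $a_1<i$. This yields $u=s_i\di w_{\la/\rho}=w_{\la/\wt\rho}$, and finally $\nu/\wt\rho=(\nu/\rho)\cup\{b\}$ is a $k$-rook strip because $\la\ssm b$ is $k$-strict forces $b$ to be isolated among the boxes of $\nu/\wt\rho$ in its row, column, and $k$-relation class. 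I expect the genuine obstacle to be verifying that removing $b$ from $\rho$ preserves compatibility of $(\la,\wt\rho)$ and $(\wt\rho,\mu)$ — i.e.\ that the Hecke-product bookkeeping in types B/C really does respect the partition-level operations — and tracking the $k$-related condition in the last $k$-rook-strip assertion; both are the places where the non-fully-commutative nature of the Grassmannian elements forces us to lean on Propositions \ref{Cprop} and the removable-box characterization rather than on a naive diagrammatic argument.
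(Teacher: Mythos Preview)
Your overall inductive strategy matches the paper's, but the argument in the critical case $\zeta=\la$ contains two genuine errors.

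First, your contradiction argument for $b\in\rho$ is circular. You assert that if $b\notin\rho$ then $(\la\ssm b,\rho)$ is compatible, invoking Proposition~\ref{Cprop}; but that proposition says $b$ is removable from $\la/\rho$ \emph{if and only if} $b$ is removable from $\la$ \emph{and} $(\la\ssm b,\rho)$ is compatible, so the compatibility is exactly what must be shown, not a consequence. In fact it is false here: $w_{\la\ssm b}\,w_\rho^{-1}=s_iw_\la w_\rho^{-1}=s_iu'$ has length $\ell(u')+1$ (since $u=s_iu'$ is reduced), not $|\la\ssm b|-|\rho|=\ell(u')-1$, so $(\la\ssm b,\rho)$ is \emph{never} compatible in this situation, regardless of whether $b$ lies in $\rho$. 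This is precisely where types B/C depart from type A, where every containment of partitions is automatically compatible.

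Second, the commutation claim fails. The decreasing reduced word for $u'$ has all entries strictly less than $i$, but its largest entry may well equal $i-1$ (take $u=s_2s_1s_0$, $i=2$, $u'=s_1s_0$), and then $s_i$ does not commute past $u'$. In the type~A proof of Proposition~\ref{skewfactorA}, the avoidance of $\{i-1,i,i+1\}$ was deduced from the explicit description $\om_\rho(e)=i+1$, $\om_\rho(f)=i$, not from any monotonicity of the word.

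The paper avoids both obstacles by applying Proposition~\ref{skewfactorC} directly to the factor $u$, obtaining $u=w_{\la/\wt\rho}$ for some $\wt\rho$ with $(\la,\wt\rho)$ and $(\wt\rho,\mu)$ compatible and $\la/\wt\rho$ a $z$-strip, and then using the $z$-strip structure (via Proposition~\ref{khoriz}) to identify $\wt\rho=\rho\ssm b$; no commutation is needed, and compatibility comes for free. There is also a smaller slip earlier: the inductive hypothesis applied to $w_{\zeta/\mu}=u'\di w_{\nu/\mu}$ yields that $\nu/\rho$ (not $\zeta/\rho$) is a $k$-rook strip, so the ``small check'' you flag in the reduced subcase $s_iw_\zeta=w_\la$ is unnecessary.
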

\begin{proof}
Proposition \ref{skewfactorC} implies that $v=w_{\nu/\mu}$ for some
$k$-strict partition $\nu$ such that $\mu\subset\nu\subset\la$ is a
compatible triple. We argue by induction on $\ell(u)$. If $\ell(u)=1$,
then $u=s_i$ is a simple reflection. If the product $s_iw_{\nu/\mu}$
is reduced, then $u=w_{\la/\nu}$ and we can take $\rho$ to be equal to
$\nu$. If the product $s_iw_{\nu/\mu}$ is not reduced, then $\nu=\la$
and we have $s_iw_{\la/\mu}=w_{\rho/\mu}$ for some $k$-strict
partition $\rho\subset\la$, where $b:=\la/\rho$ is a removable box of
$\la/\mu$. We are therefore done in both cases.

For the inductive step, we may assume that $u=s_iu'$ with
$\ell(u)=\ell(u')+1$ and $u$ is decreasing down to $0$. We have $u'\di
w_{\nu/\mu} = w_{\zeta/\mu}$ for some $k$-strict partition $\zeta$
with $\mu\subset\zeta\subset\la$.  The inductive hypothesis implies
that $u'=w_{\zeta/\rho}$ for some $k$-strict partition
$\rho\subset\nu$ such that $\mu\subset\rho\subset\zeta$ and
$\mu\subset\nu\subset\zeta$ are compatible triples and $\nu/\rho$ is a
$k$-rook strip.  If $s_iw_\zeta=w_\la$ then $(\la,\zeta)$ is a
compatible pair. Hence $(\la,\rho)$ is a compatible pair,
$u=s_iw_{\zeta/\rho}=w_{\la/\rho}$, and we are done.

Otherwise, we have $\zeta=\la$ and so $u'=w_{\la/\rho}$, while
$s_iw_\la = w_{\la\ssm b}$ for some removable box $b$ of $\la$ which
is not in $\mu$.  Now there is a reduced word $a_r\cdots a_1$ for
$w_{\la/\rho}$ such that $i>a_r>\cdots>a_1\geq 0$. Therefore
$\la/\rho$ is a $z$-strip and, using Propositions \ref{khoriz} and
\ref{skewfactorC}, we see that $u=w_{\la/\wt{\rho}}$ for some
$z$-strip $\la/\wt{\rho}$ with $|\la/\wt{\rho}|=r+1$ that contains
$\la/\rho$ as a substrip. As
$\ell(s_iw_{\la/\wt{\rho}})=\ell(w_{\la/\wt{\rho}})-1$, we deduce that
$b$ is a removable box of $\la/\wt{\rho}$ which is not in $\la/\rho$.
Hence $b$ is a box of $\rho$, $\wt{\rho}=\rho\ssm b$, and the equation
$w_{\la/\mu}=w_{\la/\wt{\rho}}\di v$ implies that
$\mu\subset\wt{\rho}\subset\la$ is a compatible triple. Since $\la\ssm
b$ is a $k$-strict partition, it follows that $\nu\ssm b$ is a
$k$-strict partition. As $\nu/\rho$ is a $k$-rook strip, we conclude
that $\nu/\wt{\rho} = (\nu/\rho)\cup b$ is also a $k$-rook strip.
Finally, the proof in case $u$ is increasing up from $0$ is similar.
\end{proof}

\begin{remark}
In the situation of Proposition \ref{skewfactC}, if 
$u$ is decreasing down to $0$ (resp.\ increasing up from $0$),
then $\la/\rho$ and $\la/\nu$ are both $z$-strips (resp.\
$\ov{z}$-strips). To see this for $\la/\nu$, note that if
$a_r\cdots a_1$ is a decreasing (resp.\ increasing) reduced word of
$u$, then $s_{a_r}\di \cdots \di s_{a_1}\di w_{\nu/\mu} = w_{\la/\mu}$ implies that
$s_{a_{j_p}}\cdots s_{a_{j_1}} w_{\nu/\mu}=w_{\la/\mu}$ for some subword $a_{j_p}\cdots a_{j_1}$ of
$a_r\cdots a_1$, and hence $w_{\la/\nu} = s_{a_{j_p}}\cdots s_{a_{j_1}}$.
\end{remark}

\begin{example}
Let $k:=1$, $\nu:=(3,1)$, and $\la:=(4,1)$, so that
$w_\nu=3\ov{2}1=s_1s_2s_0s_1$ and $w_\la=2\ov{3}1=s_2w_\nu$.  Let
$u:=s_1s_2$ and observe that $w_\la=u\di w_\nu$. We then have
$u=w_{\la/\rho}$ for $\rho:=3\subset \nu$. Note that $b:=\nu/\rho$ 
is a removable box of $\la$, but not a removable box of $\nu$. 
This shows that $(\nu,\rho)$ need not be a compatible pair in
Proposition \ref{skewfactC}. Next, let 
 $v:=s_2s_1$ and observe that again $w_\la=v\di w_\nu$. We have
$v=w_{\la/\wt{\rho}}$ for $\wt{\rho}:=(2,1)\subset \nu$. In this case
$\wt{b}:=\nu/\wt{\rho}$ is a removable box of $\nu$, but not a
removable box of $\la$.
\end{example}

The next result is a converse to Proposition \ref{skewfactC}.

\begin{prop}
\label{skewCconverse}
Suppose that $\mu\subset\rho\subset\nu\subset\la$ are $k$-strict
partitions such that $\mu\subset\rho\subset\la$ and
$\mu\subset\nu\subset\la$ are compatible triples, $\la/\rho$ is a
$z$-strip or a $\ov{z}$-strip, and $\nu/\rho$ is a
$k$-rook strip. Then we have a Hecke factorization
$w_{\la/\mu}=w_{\la/\rho}\di w_{\nu/\mu}$.
\end{prop}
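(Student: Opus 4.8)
The plan is to prove Proposition \ref{skewCconverse} by reducing the claim to the statement that $w_{\la/\rho}\di w_{\nu/\mu} = w_{\la/\mu}$, i.e.\ that the Hecke product of these two elements equals $w_{\la/\mu}$, and then to verify this by a double reduction: first on $|\nu/\rho|$ (the size of the rook strip), and within each case, by using the decreasing/increasing structure of $w_{\la/\rho}$. The key observation, as in the type A converse argument inside the proof of Proposition \ref{skewfactorA}, is that for a box $b$ in the rook strip $\nu/\rho$, the simple reflection by which $b$ is removed commutes appropriately past the relevant factors. First I would handle the base case $\rho=\nu$: here $\nu/\rho$ is empty, and the claim $w_{\la/\rho}\di w_{\nu/\mu} = w_{\la/\nu}\di w_{\nu/\mu}$ follows because $(\la,\nu)$ and $(\nu,\mu)$ are compatible pairs (being consecutive terms in the compatible triple $\mu\subset\nu\subset\la$), so the product $w_{\la/\nu}w_{\nu/\mu}$ is reduced and equals $w_{\la/\mu}$ by the definition of compatible triple and the correspondence between reduced factorizations and intermediate partitions in Section \ref{gpps}.

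For the inductive step, I would pick a box $b$ of the $k$-rook strip $\nu/\rho$ such that $\rho':=\rho\cup b$ is still a $k$-strict partition contained in $\nu$ — such a box exists since removing a box of a rook strip from $\nu$ leaves a $k$-strict partition — and arrange moreover that $(\la,\rho')$ and $(\rho',\mu)$ are compatible, so that $\mu\subset\rho'\subset\la$ is again a compatible triple and $\nu/\rho'$ is a $k$-rook strip of smaller size. By the inductive hypothesis applied to $\mu\subset\rho'\subset\nu\subset\la$ (after checking that $\la/\rho'$ remains a $z$- or $\ov z$-strip, which follows because removing a single box from a $z$-strip that still leaves a valid $k$-strict partition yields a $z$-strip — this uses Proposition \ref{khoriz}), we get $w_{\la/\rho'}\di w_{\nu/\mu} = w_{\la/\mu}$. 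It then remains to show $w_{\la/\rho}\di w_{\nu/\mu} = w_{\la/\rho'}\di w_{\nu/\mu}$. Since $\rho'/\rho = b$ is a single box removed from $\la/\rho$ as well, write $w_{\la/\rho} = s_i \di w_{\la/\rho'}$ where $s_i$ is the reflection removing $b$ (so $\ell(w_{\la/\rho}) = \ell(w_{\la/\rho'})+1$, because $\la/\rho$ is a $z$- or $\ov z$-strip and $b$ is its extreme box). Then $w_{\la/\rho}\di w_{\nu/\mu} = s_i\di w_{\la/\rho'}\di w_{\nu/\mu} = s_i\di w_{\la/\mu}$, and I must show $s_i\di w_{\la/\mu} = w_{\la/\mu}$, i.e.\ $\ell(s_i w_{\la/\mu}) < \ell(w_{\la/\mu})$.

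The main obstacle is precisely this last length comparison: I need that multiplying $w_{\la/\mu}$ on the left by $s_i$ does not increase length. This is where the hypothesis that $b$ lies in $\rho$ — equivalently, that $b\in\nu/\rho$ with $b$ also in the closure making $\rho\cup b$ valid — is used, together with the fact that $\nu/\rho$ is a rook strip. The point is that $b$ is a removable box of $\la/\wt\rho$-type configurations: since $b\in\rho'/\rho\subset\la/\rho$ and $b$ is a box of $\nu$, and since $\nu/\rho$ is a rook strip, $b$ sits at an "outer corner" position from the point of view of $\la/\mu$. Concretely, I would show $s_i w_{\la/\mu} = w_{(\la\ssm b)/\mu}$ using the characterization of removable boxes (the discussion preceding Proposition \ref{Cprop}) and Proposition \ref{Cprop} itself: $b$ is a removable box of $\la/\mu$ iff $b$ is a removable box of $\la$ and $(\la\ssm b,\mu)$ is compatible. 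The first condition holds because $\la\ssm b$ is $k$-strict (as $\nu\ssm b$ is, hence $\la\ssm b$ is, since $b$ is a rook-strip box and $\rho\subset\nu\subset\la$) and the $k'$-relation obstruction for left boxes is controlled by the $z$/$\ov z$-strip hypothesis via Proposition \ref{khoriz}; the second condition, $(\la\ssm b,\mu)$ compatible, is exactly what makes $\mu\subset\rho'\subset\la\ssm b$-type triples work and should be extractable from the compatibility of $\mu\subset\rho'\subset\la$. Care will be needed to choose $b$ so that all these compatibilities survive simultaneously; I expect one can always take $b$ to be, say, the box of $\nu/\rho$ in the lowest row (or highest, depending on whether $\la/\rho$ is a $z$- or $\ov z$-strip), and then the verification that $\mu\subset\rho'\subset\la$ is a compatible triple reduces to a local check analogous to the type A argument in Proposition \ref{skewfactorA}, now invoking the types B/C dictionary of Section \ref{typesBC} and the removable-box criterion.
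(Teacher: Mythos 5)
Your induction on $|\nu/\rho|$ hinges on the assertion that for a box $b\in\nu/\rho$ with $\rho':=\rho\cup b$ one can write $w_{\la/\rho}=s_i\di w_{\la/\rho'}$ as a reduced product with $s_i$ simple. This is the step that fails. A left descent $s_i$ of $w_{\la/\rho}$ is also a left descent of $w_\la=w_{\la/\rho}w_\rho$, so $s_iw_{\la/\rho}=w_{(\la\ssm b^*)/\rho}$ for a removable box $b^*$ of $\la$: stripping a letter on the left shrinks the \emph{outer} partition, it does not add a box to the \emph{inner} one. For your equation one would need $w_\la w_\rho^{-1}w_{\rho'}w_\la^{-1}$ to be a simple reflection, which is not true for a general box of the rook strip (note that $(\rho',\rho)$ need not even be a compatible pair: the paper's example shows $(3,1)/3$ is a $1$-rook strip while $((3,1),3)$ is not compatible), and the ``extreme box'' of the strip for which such a factorization does exist may lie in $\la/\nu$ rather than in $\nu/\rho$, so the induction may have no legal step. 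Your fallback argument for $s_i\di w_{\la/\mu}=w_{\la/\mu}$ is also incorrect: a box $b\in\nu/\rho$ need not be a removable box of $\la$ or of $\la/\mu$ at all, since $\la$ may contain boxes of $\la/\nu$ immediately to the east or south of $b$ (a $z$- or $\ov{z}$-strip can contain two adjacent boxes in a row), and the inference ``$\nu\ssm b$ is $k$-strict, hence $\la\ssm b$ is'' is false. (Ironically, had the factorization $w_{\la/\rho}=s_i\di w_{\la/\rho'}$ been available, this last point would be immediate: a left descent of $w_{\la/\rho}$ is a left descent of $w_{\la/\mu}=w_{\la/\rho}w_{\rho/\mu}$, with no removable-box analysis needed.) Finally, the existence of a box $b\in\nu/\rho$ for which $\mu\subset\rho'\subset\la$ remains a compatible triple and $\la/\rho'$ remains a $z$- or $\ov{z}$-strip is exactly the kind of statement that is delicate in types B and C and you leave it unproved (``care will be needed'').

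The paper's proof is organized precisely to avoid these existence and removability issues: it fixes the monotone (increasing or decreasing) reduced word $a_r\cdots a_1$ of $w_{\la/\rho}$, uses the reduced factorization $w_{\la/\mu}=w_{\la/\rho}w_{\rho/\mu}$ to define a compatible chain of partitions $\zeta_i\supset\rho$ via $s_{a_i}\cdots s_{a_1}w_{\rho/\mu}=w_{\zeta_i/\mu}$, and then inducts on $i$ for the elements $w^{(i)}=s_{a_i}\di\cdots\di s_{a_1}\di w_{\nu/\mu}$, showing $w^{(i)}=w_{\ov{\zeta}_i/\mu}$ where $\ov{\zeta}_i:=\zeta_i\cup\nu$. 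At each step the new box $b_i$ is either outside $\nu$ (a reduced step) or a box of the rook strip $\nu/\rho$, in which case $b_i$ is a removable box of the \emph{intermediate} shape $\ov{\zeta}_i/\rho$ (not of $\la/\mu$), so $s_{a_i}$ acts idempotently. You would need to replace your single-box induction with an argument of this kind, or else supply proofs of the factorization, removability, and compatibility claims above, which in their stated form do not hold.
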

\begin{proof}
Suppose that $\la/\rho$ is a $\ov{z}$-strip and $a_r\cdots a_1$ is a
reduced word for $w_{\la/\rho}$ with $r=|\la/\rho|$ and $0\leq a_r <
\cdots < a_1$. Since $(\rho,\mu)$ is a compatible pair, we can define
a $k$-strict partition $\zeta_i\supset\rho$ for each $i\in [1,r]$ by
the equation $s_{a_i}\cdots s_{a_1} w_{\rho/\mu}=w_{\zeta_i/\mu}$, and
let $b_i$ denote the box $\zeta_i\ssm\zeta_{i-1}$.  Define
$\ov{\zeta}_i:=\zeta_i\cup\nu$, $\zeta_0:=\rho$, $\ov{\zeta}_0:=\nu$,
and set $w^{(i)}:=s_{a_i}\di \cdots \di s_{a_1}\di w_{\nu/\mu}$. As
$\nu/\rho$ is a $k$-rook strip, it follows that $\ov{\zeta}_i$ is a
$k$-strict partition. We will prove by induction on $i$ that
$(\ov{\zeta}_i,\nu)$ is a compatible pair and $w^{(i)} =
w_{\ov{\zeta}_i/\mu}$ for each $i\in [1,r]$.  Assume therefore that
the pair $(\ov{\zeta}_{i-1},\nu)$ is compatible, $w^{(i-1)} =
w_{\ov{\zeta}_{i-1}/\mu}$, and consider the product $s_{a_i}\di
w_{\ov{\zeta}_{i-1}/\mu} = w^{(i)}$.

Suppose first that $b_i\notin \nu$, hence
$b_i=\ov{\zeta}_i\ssm\ov{\zeta}_{i-1}$. Since $\la/\rho$ is a
$k$-horizontal strip and $(\la,\nu)$ is compatible pair, we deduce
that $\la/\nu$ is also a $k$-horizontal strip. Proposition
\ref{khoriz} then implies that $\la/\nu$ is a $\ov{z}$-strip.
Moreover, $a_r\cdots a_1$ has a subword $a_{j_p}\cdots a_{j_1}$ which
is a reduced word for $w_{\la/\nu}$. Since we have $i\in
\{j_1,\ldots,j_p\}$, we deduce that the pair
$(\ov{\zeta}_i,\ov{\zeta}_{i-1})$ is compatible. As the pair
$(\ov{\zeta}_{i-1},\nu)$ is also compatible, it follows that 
$(\ov{\zeta}_i,\nu)$ is compatible. Therefore $(\ov{\zeta}_i,\mu)$ is
compatible and we have $s_{a_i}\di w_{\ov{\zeta}_{i-1}/\mu} = s_{a_i}
w_{\ov{\zeta}_{i-1}/\mu}= w_{\ov{\zeta}_i/\mu}$, as required.

Next, suppose that $b_i$ is a box of the $k$-rook
strip $\nu/\rho$, hence $\ov{\zeta}_i=\ov{\zeta}_{i-1}$ and the
pair $(\ov{\zeta}_i,\nu)$ is compatible. Using
Proposition \ref{khoriz} again, we see that $(\ov{\zeta}_i,\rho)$ is a
compatible pair and $a_r\cdots a_1$ has a subword
which includes $a_i$ and is a reduced word for $w_{\ov{\zeta}_i/\rho}$.
This proves that $b_i$ is a removable box of $\ov{\zeta}_i/\rho$ and
hence $s_{a_i}\di w_{\ov{\zeta}_i/\rho} =w_{\ov{\zeta}_i/\rho}$.
Since $w_{\ov{\zeta}_i/\mu}=w_{\ov{\zeta}_i/\rho}w_{\rho/\mu}$ we deduce 
that 
$s_{a_i}\di w_{\ov{\zeta}_{i-1}/\mu} = s_{a_i}\di
w_{\ov{\zeta}_i/\mu} = w_{\ov{\zeta}_i/\mu}$.  As
$\ov{\zeta}_r=\la$, we conclude that $w_{\la/\rho}\di w_{\nu/\mu}= w^{(r)} =
w_{\la/\mu}$.  Finally, the argument when $\la/\rho$ is a $z$-strip is
similar.
\end{proof}

\begin{example}
The hypotheses that $(\la,\nu)$ and $(\rho,\mu)$ are compatible pairs
in Proposition \ref{skewCconverse} are necessary. For instance, let
$k:=1$, $\mu:=\emptyset$, $\rho:=2$, $\nu:=3$, and $\la:=(3,1)$. Then
$\mu\subset\rho\subset\la$ is a compatible triple, $(\nu,\mu)$ is a
compatible pair, $w_{\la/\rho}=s_1s_2$ is a $\ov{z}$-strip, and
$\nu/\rho$ is a $1$-rook strip. However, we have
\[
w_{\la/\rho}\di w_\nu = (s_1s_2)\di (s_1s_0s_1) = s_1s_2s_1s_0s_1 \neq w_\la.
\]
Moreover, if $k=1$, $\mu:=3$, $\rho:=(3,1)$, and $\nu=\la:=(4,1)$,
then $\mu\subset\nu\subset\la$ is a compatible triple, $(\la,\rho)$ is
a compatible pair, and $\nu/\rho$ is a $1$-rook strip.  However, we
compute that
\[
w_{\la/\rho}\di w_{\nu/\mu} = s_2 \di (s_1s_2) = s_2s_1s_2\neq w_{\la/\mu}.
\]
\end{example}

\begin{defn}
\label{svktab}
A {\em set-valued $k$-tableau} $T$ of shape $\la/\mu$ is a pair of
sequences of $k$-strict partitions
\[
\mu = \la^0 \subset \la^1 \subset \cdots \subset \la^{2r} =\la
\quad \mathrm{and} \quad
\mu = \nu^0, \nu^1, \ldots, \nu^{2r} = \la
\]
with the first one compatible,
such that (i) $\mu\subset \nu^i\subset\la^i$, $(\nu^i,\mu)$ is a compatible pair,
and $\la^i/\nu^i$ is a $k$-rook
strip for each $i$, and (ii) $\la^i/\nu^{i-1}$ is a $\ov{z}$-strip if
$i$ is odd and a $z$-strip if $i$ is even, for $1\leq i\leq 2r$.  We
represent $T$ by a filling of the boxes in $\la/\mu$ with finite
nonempty subsets of barred and unbarred positive integers such that
for each $h\in [1,r]$, the boxes in $T$ which contain the entry
$\ov{h}$ (resp.\ $h$) form the skew diagram $\la^{2h-1}/\nu^{2h-2}$
(resp.\ $\la^{2h}/\nu^{2h-1}$).  For any set-valued $k$-tableau $T$ we
let $|T|$ be the total number of barred and unbarred positive integers
which appear in $T$, including their multiplicities, and set
$z^T:=\prod_h z_h^{m_h}$, where $m_h$ denotes the number of times that
$\ov{h}$ or $h$ appears in $T$.
\end{defn}

\begin{thm}
\label{FCthm}
Let $w=w_{\la/\mu}$ be the skew element associated to the compatible
pair $(\la,\mu)$ of $k$-strict partitions. Then we have
\begin{equation}
\label{Ftabeq}
F^C_w(Z) = \sum_T \beta^{|T|-|\la/\mu|}\, z^T
\end{equation}
where the sum is over all set-valued $k$-tableau $T$ of shape $\la/\mu$.
\end{thm}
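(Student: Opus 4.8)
The plan is to expand the definition $F^C_w(Z) = \langle C(Z), w\rangle$ where $C(Z) = C(z_1)C(z_2)\cdots$ and each $C(z_h)$ is a product of factors $(1+z_h\vp_i)$ running down from $\vp_{n-1}$ to $\vp_0$, then back up through $\vp_0$ to $\vp_{n-1}$. First I would record the combinatorial meaning of extracting the coefficient of $\vp_w$: the element $C(Z)$ expands as a sum over Hecke factorizations $w = \bigl(\di_h \, u_h\bigr) \di \bigl(\di_h\, v_h\bigr)$ — more precisely, for each variable $z_h$ we get a factor $C(z_h)$ contributing a pair $w_h' \di w_h''$, where $w_h'$ is decreasing down to $0$ (coming from the left half $(1+z_h\vp_{n-1})\cdots(1+z_h\vp_0)$) and $w_h''$ is increasing up from $0$ (coming from the right half $(1+z_h\vp_0)\cdots(1+z_h\vp_{n-1})$). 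Taking the coefficient of $\vp_w$ then yields
\[
F^C_w(Z) = \sum \beta^{e}\, z_1^{\ell_1}z_2^{\ell_2}\cdots,
\]
the sum over all Hecke factorizations $w = w_1'\di w_1''\di w_2'\di w_2''\di\cdots$ with $w_h'$ decreasing down to $0$ and $w_h''$ increasing up from $0$, where $\ell_h = \ell(w_h')+\ell(w_h'')$ and $e = \sum_h \ell_h - \ell(w)$. This is the exact $K$-theoretic analogue of the reduced-word expansion used in \cite{T1}, with $\beta$ bookkeeping the defect in each Hecke multiplication.

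Next I would translate each such Hecke factorization into a pair of sequences of $k$-strict partitions. Applying Proposition \ref{skewfactC} repeatedly (together with the associativity of the Hecke product and the compatibility bookkeeping from Section \ref{gpps}), a Hecke factorization of $w_{\la/\mu}$ into increasing/decreasing-from-$0$ factors corresponds to sequences $\mu = \la^0\subset\la^1\subset\cdots\subset\la^{2r}=\la$ and $\mu = \nu^0,\nu^1,\ldots,\nu^{2r}=\la$ satisfying: the first is compatible, $\mu\subset\nu^i\subset\la^i$ with $(\nu^i,\mu)$ compatible and $\la^i/\nu^i$ a $k$-rook strip, and $\la^i/\nu^{i-1}$ is a $\ov z$-strip for $i$ odd and a $z$-strip for $i$ even. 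The converse assignment is handled by Proposition \ref{skewCconverse}. I would pair off consecutive factors so that $w_h'' = w_{\la^{2h-1}/\nu^{2h-2}}$ (the increasing-from-$0$, i.e.\ $\ov z$-strip factor) and $w_h' = w_{\la^{2h}/\nu^{2h-1}}$ (the decreasing-to-$0$, i.e.\ $z$-strip factor), which exactly matches the data of a set-valued $k$-tableau $T$ from Definition \ref{svktab}, with the entry $\ov h$ placed on $\la^{2h-1}/\nu^{2h-2}$ and $h$ placed on $\la^{2h}/\nu^{2h-1}$. Under this bijection $z^T = z_1^{\ell_1}z_2^{\ell_2}\cdots$ and $|T| = \sum_h\ell_h$, so $|T| - |\la/\mu| = e$, and equation (\ref{Ftabeq}) follows term by term. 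The bound $r$ on the number of variables needed is finite because $w\in W_n$, and trivial factors contribute empty skew diagrams, as in the type A argument.

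The main obstacle is the faithful bookkeeping of the compatibility conditions: unlike in type A (Proposition \ref{skewfactorA}), the intermediate shapes $\nu^i$ need not satisfy $(\la^i,\nu^i)$ compatible on the nose, so one must carefully invoke the weaker Propositions \ref{skewfactorC}, \ref{skewfactC}, and \ref{skewCconverse} and keep track of which pairs among $(\la^i,\nu^i)$, $(\nu^i,\mu)$, $(\la^i,\la^{i-1})$ are compatible at each stage. In particular, establishing that the rook-strip data $\la^i/\nu^i$ propagates correctly when one multiplies by the next $z$- or $\ov z$-strip — i.e.\ that $\nu^i/\wt\rho = (\nu^i/\rho)\cup b$ stays a $k$-rook strip — is precisely the delicate inductive point already isolated in the proof of Proposition \ref{skewfactC}, and reassembling these local steps into the global bijection requires care. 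Everything else (the idCoxeter expansion of $C(Z)$, the identification of $z$- and $\ov z$-strips with decreasing/increasing reduced words via Proposition \ref{khoriz}, and the $\beta$-degree count) is routine, following the template of \cite[Thm.\ 1]{T6} and Theorem \ref{Askew}.
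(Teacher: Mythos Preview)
Your approach matches the paper's: expand $\langle C(Z), w\rangle$ as a sum over Hecke factorizations of $w$ into alternating decreasing-down-to-$0$ and increasing-up-from-$0$ pieces, then biject with set-valued $k$-tableaux via Propositions \ref{skewfactC} and \ref{skewCconverse}.

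There is one orientation slip to fix. In your factorization $w = w_1'\di w_1''\di\cdots\di w_r'\di w_r''$, the pair $(w_1',w_1'')$ coming from $C(z_1)$ sits at the \emph{left} end of the Hecke product. When you peel it off via Proposition \ref{skewfactC} you obtain $w_1' = w_{\la/\rho}$ for some $\rho$ near $\la$; that is, the leftmost factor corresponds to the strip carrying the \emph{largest} tableau entries, not the smallest. Your assignment $w_h'' = w_{\la^{2h-1}/\nu^{2h-2}}$, $w_h' = w_{\la^{2h}/\nu^{2h-1}}$ is therefore reversed, and with it the product $w_1'\di w_1''\di w_2'\di\cdots$ does not even equal $w_{\la/\mu}$ (it rearranges the $\s_i$ as $\s_2\di\s_1\di\s_4\di\s_3\di\cdots$). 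So the ``term by term'' match you claim does not hold as written; the monomial you recover is $z^T$ with the substitution $z_j\mapsto z_{r-j+1}$.

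The paper sidesteps this by first recording equation (\ref{Ftabeq2}) with the factorization indexed from the right, $w=\s_{2r}\di\cdots\di\s_1$, so that $z_j$ is paired with $(\s_{2j-1},\s_{2j})$ and the bijection $\s_i = w_{\la^i/\nu^{i-1}}$ to Definition \ref{svktab} is literally monomial by monomial. Either relabel your factors via $h\mapsto r-h+1$, or invoke the commutation $C(z_i)C(z_j)=C(z_j)C(z_i)$ in $\bW^\be_n$ (equivalently, the symmetry of $F^C_w$ in the $z_j$) to justify the passage; with that correction your argument is complete and coincides with the paper's.
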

\begin{proof}
We deduce from the definition of $F^C_w$ that
\begin{equation}
\label{Ftabeq2}
F^C_w(Z) = \sum_{w=\s_{2r} \di\cdots\di \s_1}
\be^{\sum_i \ell(\s_i)-\ell(w)}\prod_j z_j^{\ell(\s_{2j-1})+\ell(\s_{2j})}
\end{equation}
where the inner sum is over all Hecke factorizations $\s_{2r}\di \cdots
\di \s_1$ of $w$, for varying $r$, such that $\s_{2j-1}$ is
increasing up from $0$ and $\s_{2j}$ is decreasing down to $0$ for all
$j\in [1,r]$. Propositions \ref{skewfactC} and \ref{skewCconverse}
imply that any such factorization $w=\s_{2r}\di \cdots \di \s_1$
corresponds to a pair of sequences of $k$-strict partitions
\[
\mu = \la^0 \subset \la^1 \subset \cdots \subset \la^{2r} =\la \quad \mathrm{and} \quad
\mu = \nu^0, \nu^1, \ldots, \nu^{2r} = \la 
\]
as in Definition \ref{svktab}, with $\s_i= w_{\la^i/\nu^{i-1}}$ for each $i\in [1,2r]$.
Moreover, if $T$ is the associated set-valued $k$-tableau, then
\[
|T|= \sum_{i=1}^{2r}|\la^i/\nu^{i-1}| = \sum_{i=1}^{2r}\ell(\s_i)
\quad \text{and} \quad
z^T=\prod_{j=1}^r z_j^{\ell(\s_{2j-1})+\ell(\s_{2j})}
\]
while $\ell(w)=|\la/\mu|$. Equation (\ref{Ftabeq}) therefore follows immediately
from (\ref{Ftabeq2}).
\end{proof}

\begin{example}
Let $k:=1$ and $\la:=(5,3,1)$ so that $w_\la=3\,\ov{4}\,\ov{2}\,1 =
s_1s_3s_2s_0s_3s_1s_0s_2s_1$. The Hecke factorization
\[
(s_3s_1s_0) \di 1 \di (s_3s_2)\di 1 \di (s_2s_0) \di (s_2s_3)
\di (s_3s_1s_0) \di (s_0s_2)\di (s_2s_1) \di 1
\]
of $w_\la$ as a product of $2r=10$ factors $\s_i$ in $W_4$ corresponds to the following
set-valued $1$-tableau $T$ of shape $\la$:
\[
\begin{array}{ccccc} 1 & \{\ov{2},2\} & 2 & \{\ov{3},3,4\} & \{4,5\} \\
\{1,\ov{2}\} & \{3,5\} & 5 && \\ \{2,\ov{3}\} &&&& \end{array}.
\]
Furthermore, we have $z^T=z_1^2z_2^5z_3^4z_4^2z_5^3$.
\end{example}

\begin{example}
\label{exCtab}
Let $(\la,\mu)$ be a compatible pair of typed $k$-strict partitions. A
{\em barred $k$-tableau} $T$ of shape $\la/\mu$ is a sequence of
$k$-strict partitions
\[
\mu = \la^0 \subset \la^1 \subset \cdots \subset \la^{2r} =\la
\]
such that $\la^i/\la^{i-1}$ is a $\ov{z}$-strip if $i$ is odd and a
$z$-strip if $i$ is even, for $1\leq i\leq 2r$.  We represent $T$ by a
filling of the boxes in $\la/\mu$ with barred and unbarred positive
integers such that for each $h\in [1,r]$, the boxes in $T$ with entry
$\ov{h}$ (resp.\ $h$) form the skew diagram $\la^{2h-1}/\la^{2h-2}$
(resp.\ $\la^{2h}/\la^{2h-1}$).

If we set $\beta:=0$, then the $K$-theoretic Stanley function
$F^C_w(Z)$ specializes to the type C Stanley function $F_w(Z)$,
introduced and studied in \cite{BH, FK3, La}. If $w:=w_{\la/\mu}$
then Theorem \ref{FCthm} gives
\begin{equation}
\label{Feq}
F_w(Z) = \sum_T z^T
\end{equation}
summed over all barred $k$-tableaux $T$ of shape $\la/\mu$.
The barred $k$-tableaux and equation (\ref{Feq}) refine the
$k$-tableaux of \cite[Def.\ 2]{T1} and \cite[Eqn.\ (7)]{T6} in the
same way that the marked shifted tableaux of Sagan and Worley
\cite{Sa, W} and \cite[III.(8.16$'$)]{Mac} refine the shifted
tableaux and \cite[III.(8.16)]{Mac}.

For example, let $k=1$, $\la=(3,1)$, $Z_2:=(1,2)$, and {\bf Q}$_{12}$ denote
the alphabet $\{\ov{1}<1<\ov{2}<2\}$. Following \cite[Example 7]{T1}, there are
four $1$-tableaux $T$ of shape $\la$ with entries in $\{1,2\}$, each with
multiplicity $2^{n(T)}=4$:
\begin{gather*}
\begin{array}{l} 1\,2\,2 \\ 2 \end{array} \ \ \ \
\begin{array}{l} 1\,1\,2 \\ 2 \end{array}  \ \ \ \
\begin{array}{l} 1\,2\,2 \\ 1 \end{array}  \ \ \ \
\begin{array}{l} 1\,1\,2 \\ 1 \end{array}.
\end{gather*}
These correspond to the following 16 barred $1$-tableaux with entries
in {\bf Q}$_{12}$:
\begin{gather*}
\begin{array}{l} \ov{1}\,2\,2 \\ \ov{2} \end{array}  \ \ \ 
\begin{array}{l} \ov{1}\,\ov{2}\,2 \\ \ov{2} \end{array}  \ \ \
\begin{array}{l} 1\,2\,2 \\ \ov{2} \end{array}  \ \ \
\begin{array}{l} 1\,\ov{2}\,2 \\ \ov{2} \end{array}  \ \ \
\begin{array}{l} \ov{1}\,\ov{1}\,\ov{2} \\ \ov{2} \end{array}  \ \ \ 
\begin{array}{l} \ov{1}\,\ov{1}\,2 \\ \ov{2} \end{array}  \ \ \
\begin{array}{l} \ov{1}\,1\,\ov{2} \\ \ov{2} \end{array}  \ \ \ 
\begin{array}{l} \ov{1}\,1\,2 \\ \ov{2} \end{array}  
\end{gather*}
\begin{gather*}
\begin{array}{l} \ov{1}\,\ov{2}\,2 \\ 1 \end{array}  \ \ \ 
\begin{array}{l} \ov{1}\,2\,2 \\ 1 \end{array}  \ \ \
\begin{array}{l} 1\,\ov{2}\,2 \\ 1 \end{array}  \ \ \ 
\begin{array}{l} 1\,2\,2 \\ 1 \end{array}  \ \ \
\begin{array}{l} \ov{1}\,\ov{1}\,\ov{2} \\ 1 \end{array}  \ \ \ 
\begin{array}{l} \ov{1}\,\ov{1}\,2 \\ 1 \end{array}  \ \ \
\begin{array}{l} \ov{1}\,1\,\ov{2} \\ 1 \end{array}  \ \ \ 
\begin{array}{l} \ov{1}\,1\,2 \\ 1 \end{array}.
\end{gather*}
We deduce that $F_{w_\la}(Z_2)=4z_1^3z_2 + 8 z_1^2z_2^2 + 4 z_1z_2^3$.
\end{example}

Let {\bf Q} denote the ordered alphabet
\[
(n-1)'<\cdots 2'<1'< \ov{1} < 1 < \ov{2} < 2 < \cdots <1''<2''<\cdots<(n-1)''. 
\]
The symbols $\ov{1},\ov{2},\ldots$ (resp.\ $1,2,\ldots$) are 
{\em barred integers} (resp.\ {\em unbarred integers}).

\begin{defn}
\label{Ctrit}
Consider a pair $U$ of sequences of $k$-strict partitions
\[
\mu = \la^0 \subset \la^1 \subset \cdots \subset \la^{2n+2r-2} =\la
\quad \mathrm{and} \quad
\mu = \nu^0, \nu^1, \ldots, \nu^{2n+2r-2} = \la 
\]
with the first one compatible,
such that (i) $\mu\subset\nu^i\subset\la^i$, $(\nu^i,\mu)$ is a compatible pair,
and $\la^i/\nu^i$ is a $k$-rook
strip for each $i$, and (ii) $\la^i/\nu^{i-1}$ is an
$x$-strip for each $i\leq n-1$, a $y$-strip for each $i\geq n+2r$,
and a $\ov{z}$-strip (resp.\ $z$-strip) if $i = n-1+j$ for some odd
(resp.\ even) $j\in [1,2r]$. We represent $U$ by a filling of the
boxes in $\la/\mu$ by including the entry $(n-i)'$ in each box of
$\la^i/\nu^{i-1}$ for $1\leq i\leq n-1$, the entry $(i-n-2r+1)''$ in
each box of $\la^i/\nu^{i-1}$ for $n+2r\leq i\leq 2n+2r-2$, and the
entry $\ov{j}$ (resp.\ $j$) in each box of $\la^{n-1+j}/\nu^{n-2+j}$
for all odd (resp.\ even) integers $j\in [1,2r]$. We say that $U$ is a
{\em set-valued $k$-tritableau} of shape $\la/\mu$ if for $1\leq i
\leq \ell_k(\mu)$ (resp.\ $1\leq i \leq \ell_k(\la)$) and $1\leq j
\leq k$, the entries of $U$ in row $i$ are $\geq (\mu_i-k)'$
(resp.\ $\leq (\la_i-k-1)''$) and the entries in column $k+1-j$ lie in
the interval $[(w_\mu(j))', (w_\la(j)-1)'']$.  We let $|U|$ be the
total number of symbols of {\bf Q} which appear in $U$, including
their multiplicities, and define
\[
(xyz)^U:= \prod_ix_i^{n'_i}\prod_i y_i^{n''_i}\prod_j z_j^{n_j} 
\]
where $n'_i$ and $n''_i$ denote the number of times that $i'$ and
$i''$ appear in $U$, respectively, for each $i\in [1,n-1]$, and $n_j$
denotes the number of times that $j$ or $\ov{j}$ appears in $U$, for
each $j \geq 1$.
\end{defn}

\begin{thm}
\label{Cskew} 
For the skew element $w:=w_{\la/\mu}$ of $W_n$, we have
\begin{equation}
\label{CGteq}
\CG_w(Z;X,Y)=\sum_U \be^{|U|-|\la/\mu|}\,(xyz)^U
\end{equation}
summed over all set-valued $k$-tritableaux $U$ of shape $\la/\mu$.
\end{thm}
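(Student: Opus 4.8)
The plan is to follow the proofs of Theorems \ref{Askew} and \ref{FCthm}, now carrying along all three families of variables at once. First I would expand the defining expression (\ref{dbleC}): choosing in each factor $(1+t\vp_i)$ of the operators $A'_p(y_p)$, $C(z_j)$, and $A_p(x_p)$ whether or not to include $\vp_i$, and then collecting terms in the $\vp_w$-basis, one obtains
\[
\CG_w(Z;X,Y) \;=\; \sum \be^{\,e}\;\prod_{p=1}^{n-1} y_p^{\ell(v_p)}\;\prod_{j\geq 1} z_j^{\ell(\s_{2j-1})+\ell(\s_{2j})}\;\prod_{p=1}^{n-1} x_p^{\ell(u_p)},
\]
where the sum is over all Hecke factorizations
\[
w \;=\; v_{n-1}\di\cdots\di v_1 \;\di\; \s_{2r}\di\cdots\di \s_1 \;\di\; u_1\di\cdots\di u_{n-1}
\]
(for varying $r$, with some factors possibly trivial) in which $u_p$ is decreasing down to $p$, $v_p$ is increasing up from $p$, $\s_{2j-1}$ is increasing up from $0$, and $\s_{2j}$ is decreasing down to $0$, while $e$ is the total length of the factors minus $\ell(w)$. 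Here $u_p$ and $v_p$ are the contributions of $A_p(x_p)$ and $A'_p(y_p)$, and each consecutive pair of middle factors $\s_{2j-1},\s_{2j}$ is contributed by one operator $C(z_j)$, exactly as in the proof of Theorem \ref{FCthm}.

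Next I would translate each such factorization into the pair of sequences of $k$-strict partitions required by Definition \ref{Ctrit}. Building the chain from the right of the factorization, starting at $\la^0=\nu^0=\mu$ and inserting one factor at a time (first the $u_p$'s, then the $\s_i$'s, then the $v_p$'s), Proposition \ref{skewfactorC} guarantees that after each insertion the current Hecke product is again a skew element $w_{\la^i/\mu}$ for a compatible pair $(\la^i,\mu)$; this produces the nested sequence $\mu=\la^0\subset\la^1\subset\cdots\subset\la^{2n+2r-2}=\la$ with compatible consecutive pairs, together with the auxiliary $k$-strict partitions $\nu^i$. For the middle block, Propositions \ref{skewfactC} and \ref{skewCconverse}, applied just as in the proof of Theorem \ref{FCthm}, show that $\la^{n-1+j}/\nu^{n-2+j}$ is a $\ov{z}$-strip for $j$ odd and a $z$-strip for $j$ even, and that each $\la^i/\nu^i$ is a $k$-rook strip; for the $u$- and $v$-blocks, the corresponding statements — that $\la^i/\nu^{i-1}$ is an $x$-strip (resp.\ a $y$-strip) and that $\la^i/\nu^i$ is a $k$-rook strip — follow from the type A analysis behind Proposition \ref{skewfactorA}, since the factors $u_p$ and $v_p$ involve no $\vp_0$. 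The converse, producing a Hecke factorization of the above form from any tritableau datum, uses the same three propositions. This sets up, before the bound conditions are imposed, a bijection between set-valued $k$-tritableaux $U$ of shape $\la/\mu$ and the Hecke factorizations appearing in the expansion, under which $|U|=\sum_i|\la^i/\nu^{i-1}|$ is the total length of the factors and $(xyz)^U$ is the corresponding monomial.

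The remaining — and, I expect, most delicate — point is to check that the factorizations produced by (\ref{dbleC}) are exactly those whose associated tritableau respects the bound conditions of Definition \ref{Ctrit}: entries in row $i$ bounded below by $(\mu_i-k)'$ for $i\leq\ell_k(\mu)$ and above by $(\la_i-k-1)''$ for $i\leq\ell_k(\la)$, and entries in column $k+1-j$ lying in $[(w_\mu(j))',(w_\la(j)-1)'']$. These bounds encode the fact that the operators $A_p$, $A'_p$, $C$ live in the algebras $\bW^{\be}_n$ for a fixed $n$, so the reduced words of the factors have bounded support and the partial products along the chain stay of $k$-Grassmannian type; translating this support control, row by row and column by column and uniformly across the three blocks of variables, into the displayed inequalities is the heart of the argument. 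I would carry this out exactly as in the proof of the corresponding tableau formula for skew Schubert polynomials in \cite{T6}, the only genuinely new ingredient being the passage from ordinary to set-valued fillings, which is handled verbatim as in the proof of Theorem \ref{Askew}. Once the bounds are matched, summing the $\be$-weighted monomials over all set-valued $k$-tritableaux $U$ of shape $\la/\mu$ gives (\ref{CGteq}).
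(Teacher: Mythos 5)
Your overall strategy is the paper's: expand (\ref{dbleC}) into a sum over Hecke factorizations $v_{n-1}\di\cdots\di v_1\di \s_{2r}\di\cdots\di\s_1\di u_1\di\cdots\di u_{n-1}$ with the stated monotonicity conditions, convert these into the pairs of partition sequences of Definition \ref{Ctrit} via the skew factorization results, and establish the row and column bounds exactly as in the proof of Theorem 2 of \cite{T6}. Most of this is sound and matches the paper's argument.

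There is, however, one incorrectly justified step. For the outer blocks you claim that the rook-strip and $x$-/$y$-strip conclusions ``follow from the type A analysis behind Proposition \ref{skewfactorA}, since the factors $u_p$ and $v_p$ involve no $\vp_0$.'' That reasoning does not work: Proposition \ref{skewfactorA} is a statement about Hecke products inside $S_\infty$ relative to an $m$-Grassmannian element, whereas here the right-hand factor of each product is a partial Hecke product $w_{\la^{i-1}/\mu}$ which is a type C skew element generally involving $s_0$, so the type A proposition simply does not apply. Moreover, absence of $\vp_0$ in the left factor is not sufficient in type C: the example following Definition \ref{krook} (with $k=1$, $\la=\nu=(4,1)$, $\rho=2$, $\mu=\emptyset$, $u=s_1s_2s_1\in S_\infty$) exhibits a left factor free of $s_0$ for which the rook-strip conclusion fails. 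What saves the argument, and what the paper actually uses, is that each $u_p$ is decreasing down to $p\geq 1$ and hence in particular decreasing down to $0$, and each $v_p$ is increasing up from $p\geq 1$ and hence increasing up from $0$; therefore Propositions \ref{skewfactC} and \ref{skewCconverse} apply uniformly to every factor in the chain, giving the compatible triples, the $k$-rook strips $\la^i/\nu^i$, and $z$-/$\ov{z}$-strips throughout, while the fact that $u_p,v_p\in S_n$ forces $\ell_k$ to be constant on the corresponding steps, so those strips are $x$-strips (resp.\ $y$-strips) as required by Definition \ref{Ctrit}. With the outer blocks rerouted through the type C propositions in this way, the rest of your argument (including deferring the entry bounds to \cite{T6}) goes through as in the paper.
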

\begin{proof}
It follows from formula (\ref{dbleC}) that we have 
\[
\CG_w(Z;X,Y) 
= \sum \be^{\ell(u,v,\sigma,w)} y_{n-1}^{\ell(v_{n-1})}\cdots y_1^{\ell(v_1)}
x_1^{\ell(u_1)}\cdots x_{n-1}^{\ell(u_{n-1})}\prod_j z_j^{\ell(\sigma_{2j-1})+\ell(\s_{2j})}
\]
where the sum is over all Hecke factorizations
\[
v_{n-1}\di\cdots
\di v_1\di \sigma_{2r}\di\cdots\di\sigma_1 \di u_1\di\cdots \di u_{n-1}
\]
of $w$, for varying $r\geq 0$, such that $v_p\in S_n$ is increasing up
from $p$ and $u_p\in S_n$ is decreasing down to $p$ for each $p$,
$\sigma_i$ is increasing up from $0$ (resp.\ decreasing down to $0$)
for all odd $i$ (resp.\ all even $i$), and
$\ell(u,v,\sigma,w):=\sum_p\ell(u_p)+\sum_p\ell(v_p)
+\sum_i\ell(\sigma_i)- \ell(w)$.  We deduce from Propositions
\ref{skewfactC} and \ref{skewCconverse} that such factorizations
correspond to pairs of sequences of $k$-strict partitions
\[
\mu = \la^0 \subset \la^1 \subset \cdots \subset \la^{2n+2r-2} =\la
\quad \mathrm{and} \quad 
\mu = \nu^0, \nu^1, \ldots, \nu^{2n+2r-2} = \la 
\]
and an associated filling $U$ of the the boxes in $\la/\mu$ as in
Definition \ref{Ctrit}. The required bounds on the entries of $U$ are
established exactly as in the proof of \cite[Thm.\ 2]{T6}, showing
that $U$ is a set-valued $k$-tritableau of shape $\la/\mu$ such that
\[
(xyz)^U=y_{n-1}^{\ell(v_{n-1})}\cdots y_1^{\ell(v_1)}x_1^{\ell(u_1)}\cdots
x_{n-1}^{\ell(u_{n-1})}\prod_j z_j^{\ell(\sigma_{2j-1})+\ell(\s_{2j})}
\]
and $|U|=\sum_p\ell(u_p)+\sum_p\ell(v_p)+\sum_i\ell(\sigma_i)$. Since the sum in equation
(\ref{CGteq}) is over all such $U$, the result follows.
\end{proof}

\begin{example}
Let $k:=1$ and $\la:=(4,2)$ so that $w_\la=2\,\ov{3}\,\ov{1} =s_2s_0s_1s_2s_0s_1$,
and consider the alphabet ${\bf Q}_{ex}:=\{1'<\ov{1}<1<\ov{2}<1''<2''\}$.
The set-valued $1$-tritableau
\[
\begin{array}{cccc} 1' & \{\ov{1},1\} & 1 & \{1,\ov{2},1''\} \\
\{1',\ov{1}\} & \ov{2} && \end{array}
\quad\text{and}\quad
\begin{array}{cccc} \{1',\ov{1}\} & \{\ov{1},1\} & 1 & \{1,\ov{2},1'',2''\} \\
 \ov{2} & \ov{2} && \end{array}
\]
of shape $\la$ with entries in ${\bf Q}_{ex}$ correspond to the Hecke
factorizations
\[
s_2\di (s_0s_2)\di (s_2s_1s_0) \di (s_0s_2) \di (s_2s_1)
\quad \mathrm{and} \quad
s_2\di s_2\di(s_0s_1s_2)\di (s_2s_1s_0) \di (s_0s_1) \di s_1
\]
of $w_\la$ and map to the monomials $x^2_1z_1^5z_2^2y_1$ and
$x_1z_1^5z_2^3y_1y_2$, respectively.
\end{example}

\begin{example}
For each integer $k\geq 0$, the signed permutation $s_k$ of length one can
be viewed as the $k$-Grassmannian element $w_\la$ associated to $\la=1$.
When $k$ is positive, Theorem \ref{Cskew} therefore gives
\[
\CG_{s_k}(Z;X,Y) = \sum_H \beta^{|H|-1}x_1^{e_1}\cdots x_k^{e_k}y_1^{e_{k+1}}
\cdots y_k^{e_{2k}}\prod_j z_j^{f_j+g_j}
\]
summed over all sequences
$H=(e_1,\ldots,e_{2k},f_1,g_1,f_2,g_2,\ldots)$ with finite support
such that $e_i,f_j,g_j\in \{0,1\}$ for each $i,j$ and $|H|:=e_1+\cdots
+ e_{2k}+\sum_j(f_j+g_j)>0$. Furthermore, we have
\[
\CG_{s_0}(Z;X,Y) = \sum_D \beta^{|D|-1}\prod_j z_j^{f_j+g_j}
\]
summed over all sequences $D=(f_1,g_1,f_2,g_2,\ldots)$ with finite
support such that $f_j,g_j\in \{0,1\}$ for each $j$ and
$|D|:=\sum_j(f_j+g_j)>0$.
\end{example}

\subsection{Tableau formula for type B skew Grothendieck polynomials}
\label{tBt}

Given that the root systems of types $\text{B}_n$ and $\text{C}_n$
share a common Weyl group $W_n$, and the similarity between equations
(\ref{dbleB}) and (\ref{dbleC}), it is easy to modify the definitions
and theorems in this section to obtain set-valued $k$-tableau formulas
for the type B Grothendieck polynomials $\BG_w(Z;X,Y)$ and
$K$-theoretic Stanley functions $F^B_w(Z)$ indexed by skew
elements $w=w_{\la/\mu}$.  Indeed, it suffices to change the
definition of a $\ov{z}$-strip in type B to be the same as that of a
$y$-strip. With this modification, the main results of this section
then apply verbatim to the setting of type B.

\begin{example}
The type B Grothendieck polynomials for the elements $s_k$ of length
one in $S_\infty$ satisfy $\BG_{s_k}(Z;X,Y) =
\CG_{s_k}(Z;X,Y)$. Furthermore, we have
\[
\BG_{s_0}(Z;X,Y) = \sum_{D'} \beta^{|D'|-1}\prod_j z_j^{f_j}
\]
summed over all sequences $D'=(f_1,f_2,\ldots)$ with finite support
such that $f_j\in \{0,1\}$ for each $j$ and $|D'|:=\sum_j f_j>0$.
\end{example}

\section{Tableau formula for type D skew Grothendieck polynomials}
\label{tDt}

In this section, we assume that all Grassmannian
and skew elements are taken with respect to a fixed positive integer $k$.

Let $\la$ and $\mu$ be any two typed $k$-strict partitions such that
$(\la,\mu)$ is a compatible pair and let $w_{\la/\mu}$ be the
corresponding skew element of $\wt{W}_n$.  It was shown in \cite{T3}
that $w_{\la/\mu}$ is unimodal if and only if $\la/\mu$ is a typed
$k'$-horizontal strip, and in this case we have
$n'(w_{\la/\mu})=n'(\la/\mu)$.  If $w_{\la/\mu}$ is decreasing down to
$1$, then we say that the typed $k'$-horizontal strip $\la/\mu$ is an
{\em typed $x$-strip} or a {\em typed $z$-strip}. If $w_{\la/\mu}$ is
increasing up from $1$ (resp. $\Box$), then we say that $\la/\mu$ is a
{\em typed $y$-strip} (resp.\ {\em typed $\ov{z}$-strip}).  We say
that a typed $k'$-horizontal strip $\la/\mu$ is {\em extremal} if
\[
(\ell_k(\la),\type(\la))\neq (\ell_k(\mu),\type(\mu)).
\]  
For any typed $k$-strict partition $\la$, let
$\epsilon(\la):=\ell_k(\la)+\type(\la)$.

According to \cite[Prop.\ 2]{T6}, a typed $k'$-horizontal strip
$\la/\mu$ is a typed $x$-strip (resp.\ typed $y$-strip) if and only if
(i) the left boxes in $\la/\mu$ form a vertical strip
(resp.\ horizontal strip), and no two boxes in $\la/\mu$ are
$(k-1)$-related (resp.\ no two right boxes in $\la/\mu$ are in the
same row), and (ii) if $\la/\mu$ is extremal then
$(\type(\la),\type(\mu))\neq (0,0)$ and the following condition holds:
if $\epsilon(\mu)$ is odd, then $\epsilon(\la)$ is odd {\em and}
$\type(\mu)=0$, while if $\epsilon(\mu)$ is even, then $\epsilon(\la)$
is odd {\em or} $\type(\mu)=1$. Moreover, it was shown in the proof of
loc.\ cit.\ that the typed $\ov{z}$-strips are characterized among all
typed $k'$-horizontal strips by the next Proposition.

\begin{prop}
\label{zdistinct}
A typed $k'$-horizontal strip $\la/\mu$ is a typed $\ov{z}$-strip if
and only if {\em (i)} the left boxes in $\la/\mu$ form a horizontal
strip, and no two right boxes in $\la/\mu$ are in the same row, and
{\em (ii)} if $\la/\mu$ is extremal then $(\type(\la),\type(\mu))\neq
(0,0)$ and the following condition holds: if $\epsilon(\mu)$ is odd,
then $\epsilon(\la)$ is even {\em or} $\type(\mu)=1$, while if
$\epsilon(\mu)$ is even, then $\epsilon(\la)$ is even {\em and}
$\type(\mu)=0$.
\end{prop}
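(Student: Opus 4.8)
The plan is to reduce Proposition \ref{zdistinct} to an analysis of the minimal unimodal building blocks, exactly in the spirit of the proof of Proposition \ref{khoriz} in type C. Recall that a signed permutation (or element of $\wt W_n$) is increasing up from $\Box$ if and only if it has no reduced word which contains a subword of the form ``$i-1,i$'' for $i\geq 2$, nor of the form ``$1,\Box$'' or ``$\Box,1$'' in the wrong order, nor the subword ``$\Box,2$ then $\Box$'' patterns that would break the subword-of-$\Omega^D_n$ condition. More precisely, I would use the characterization of increasing-up-from-$\Box$ elements via reduced words which are subwords of $(\Box,2,3,\ldots,n-1)$, together with the braid and commutation relations in $\wt{\bW}^\be_n$, to argue that it suffices to treat the case where $w_{\la/\mu}$ is one of the short unimodal elements $s_{i-1}s_i$, $s_is_{i-1}$ (for $i\geq 2$), $s_\Box s_2$, $s_2 s_\Box$, $s_1 s_\Box = s_\Box s_1$, or the type-D-specific two-letter and three-letter words that can occur inside $\Omega^D_n$. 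The point is that in any reduced factorization $w_{\la/\mu}=u\,(\text{forbidden or allowed two-letter block})\,v$, the outer factors $u$ and $v$ are themselves skew, associated to typed $k'$-horizontal substrips of $\la/\mu$, so the global geometric condition (i) (horizontal strip on the left, no two right boxes in a row) propagates from the local pieces, and likewise for the extremal condition (ii).

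The bulk of the work is then the case $i=2$ together with the genuinely type-D cases involving $s_\Box$, which are the new phenomena not covered by \cite[Prop.\ 2]{T6} or by the type C argument. Here I would compute directly, as in the proof of Proposition \ref{khoriz}: for each candidate short element $w$, write down the one-line forms of $w_\mu$ and $w_\la=w\,w_\mu$ with the divider between positions $k$ and $k+1$, using the explicit formula in Section \ref{typesBC}/Type D for the shape of a $k$-Grassmannian element and its type, and read off which boxes of $\la/\mu$ appear and how they are $(k-1)$-related. The element $s_\Box$ acts on the first two entries by $(a,b)\mapsto(-b,-a)$, so it is exactly the interaction of $s_\Box$ with $s_2$ (the braid $s_\Box s_2 s_\Box = s_2 s_\Box s_2$) and with $s_1$ (they commute) that governs whether a left box in column $k$ or $k-1$ or a right box in column $k+1,k+2$ is being added; tracking the resulting change in $\ell_k$ and in $\type$ is precisely what produces the parity conditions on $\epsilon(\mu)$ and $\epsilon(\la)$ in (ii). I would organize this as a short table of cases, each line of which records $(w_\mu,w_\la)$, the added boxes, and the verification of (i) and (ii), then note that the converse direction follows from the same bijection between $\nu$ and $w_\nu$ run backwards, exactly as at the end of the proof of Proposition \ref{khoriz}.

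A cleaner alternative, which I would actually prefer to write, is to avoid re-deriving everything from scratch: the Proposition is asserted in the excerpt to have been \emph{already established inside the proof of} \cite[Prop.\ 2]{T6}, so the honest proof is to extract that argument. In that case the plan is simply to recall that $w_{\la/\mu}$ is a typed $\ov{z}$-strip iff it is unimodal (hence $\la/\mu$ is a typed $k'$-horizontal strip, by \cite{T3}) and decreasing-from-$\Box$ in the sense that its unique unimodal reduced word is a subword of $(\Box,2,\ldots,n-1)$ read in the increasing-up-from-$\Box$ orientation; then invoke the dichotomy in \cite[Prop.\ 2]{T6} between typed $x$-strips and typed $y$-strips, observe that a typed $\ov{z}$-strip is a typed $y$-strip twisted by the $s_\Box$-versus-$s_1$ ambiguity at the bottom of $\Omega^D_n$, and read off (i) and (ii) by comparing the two parity clauses. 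The one place where care is genuinely needed, and which I expect to be the main obstacle, is the interplay between $\type(\la)$, $\type(\mu)$, and the parities of $\epsilon(\la)=\ell_k(\la)+\type(\la)$ and $\epsilon(\mu)$ in the extremal case: one must check that the asymmetric ``odd $\Rightarrow$ (even or $\type(\mu)=1$)'' versus ``even $\Rightarrow$ (even and $\type(\mu)=0$)'' dichotomy here is exactly the logical negation-plus-swap of the typed $y$-strip condition quoted just before the Proposition, and that the case $(\type(\la),\type(\mu))=(0,0)$ is correctly excluded. This is a finite bookkeeping check, but it is the delicate part, since type D is where the $\pm$-ambiguity built into $s_\Box$ forces these parity constraints that have no analogue in types B and C.
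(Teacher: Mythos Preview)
The paper does not actually prove Proposition~\ref{zdistinct}: the sentence immediately preceding it says that the characterization ``was shown in the proof of loc.\ cit.'', i.e.\ in the proof of \cite[Prop.~2]{T6}, and no further argument is given. Your second (``cleaner alternative'') paragraph identifies this correctly and is the right thing to write here: the honest proof is simply to point to \cite[Prop.~2]{T6} and, if desired, to spell out how the $\ov z$-strip clause there differs from the $y$-strip clause via the $s_\Box$-versus-$s_1$ swap at the bottom of $\Omega^D_n$.

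Your first approach---the direct case analysis in the spirit of Proposition~\ref{khoriz}---is a reasonable independent route, but as written it has a small slip you should fix before pursuing it. An element increasing up from $\Box$ has, by definition, a reduced word that is a subword of $(\Box,2,3,\ldots,n-1)$; in particular the letter $1$ never appears, so there are no ``$1,\Box$'' or ``$\Box,1$'' patterns to worry about. The genuine local obstructions to being a $\ov z$-strip are the subwords $i,i{-}1$ for $i\geq 3$, the subword $2,\Box$ (in that order), and any occurrence of $1$ at all; these are what you should tabulate. The parity bookkeeping in (ii) then comes entirely from whether the single possible $s_\Box$ is present and how it interacts with the first two entries of $w_\mu$, exactly as you say. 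With that correction your sketch would go through, but since the paper itself is content to cite \cite{T6}, the shorter second route is both sufficient and faithful to the source.
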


The following results are proved exactly as their analogues in type C.

\begin{prop}
\label{skewfactorD}
Suppose that $w_{\la/\mu}=u\di v$ for some skew element $w_{\la/\mu}$
of $\wt{W}_\infty$.  Then there exist typed $k$-strict partitions
$\rho$ and $\nu$ such that $\mu\subset\rho\subset\la$ and
$\mu\subset\nu\subset\la$ are compatible triples, $u=w_{\la/\rho}$, and
$v=w_{\nu/\mu}$.
\end{prop}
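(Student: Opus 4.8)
The plan is to imitate, essentially verbatim, the proof of Proposition \ref{skewfactorC}, replacing every occurrence of ``$k$-strict partition'' by ``typed $k$-strict partition'' and every use of the Weyl group $W_\infty$ by $\wt{W}_\infty$. Concretely, suppose that $a_1\cdots a_r$ is a reduced word for $u$ in the type D alphabet $\N_\Box$, so that $w_{\la/\mu} = s_{a_1}\di\cdots\di s_{a_r}\di v$. The key observation, valid in every classical type and recorded in the Hecke-product discussion of Section \ref{wggp}, is that for any simple reflection $s_i$ and any element $x$, the product $s_i\di x$ equals either $x$ or the reduced product $s_ix$. Applying this repeatedly from the left, we peel off the factors $s_{a_1},\ldots,s_{a_r}$ one at a time and conclude that $w_{\la/\mu} = w'v$ is a reduced factorization for some $w'\in\wt{W}_\infty$.

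Now invoke the structure theory of skew elements in type D, stated at the end of Section \ref{gpps}: for a skew element $w_{\la/\mu}$ there is a 1-1 correspondence between reduced factorizations $w_{\la/\mu} = w'v$ and typed $k$-strict partitions $\nu$ with $\mu\subset\nu\subset\la$ forming a compatible triple, under which $w' = w_{\la/\nu}$ and $v = w_{\nu/\mu}$. In particular, from the factorization $w_{\la/\mu} = w'v$ we obtain a typed $k$-strict partition $\nu$ with $\mu\subset\nu\subset\la$, with $(\la,\nu)$ and $(\nu,\mu)$ compatible pairs, and $v = w_{\nu/\mu}$. For the left factor $u$, we run the symmetric argument: peeling simple reflections off the \emph{right} end of a reduced word for $v$ shows that $w_{\la/\mu} = uw''$ is a reduced factorization for some $w''$, and the same 1-1 correspondence yields a typed $k$-strict partition $\rho$ with $\mu\subset\rho\subset\la$, with $(\la,\rho)$ and $(\rho,\mu)$ compatible pairs, and $u = w_{\la/\rho}$.

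Since the statement only asserts the existence of such $\rho$ and $\nu$ (and not, as in Proposition \ref{skewfactC}, any relation between them or any rook-strip condition on $\nu/\rho$), this is the whole argument; no induction on $\ell(u)$ and no case analysis of removable boxes is needed. The one point that warrants a sentence of care is that the peeling argument is purely Hecke-algebraic and uses nothing special about type D — it works because $s_i\di x\in\{x,\,s_ix\}$ always — so the only type-D input is the skew-factorization correspondence of Section \ref{gpps}, which is cited as known. I expect no real obstacle here; the proposition is deliberately the weak, easy analogue of the type A result, and the remark ``proved exactly as their analogues in type C'' preceding it signals that the author intends precisely the transcription of the proof of Proposition \ref{skewfactorC} described above. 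If I wanted to be fully explicit I would simply write: ``This is proved in the same way as Proposition \ref{skewfactorC}, using the bijection between reduced factorizations of $w_{\la/\mu}$ and compatible triples $\mu\subset\nu\subset\la$ of typed $k$-strict partitions in place of its type C counterpart.''
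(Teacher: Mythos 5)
Your proposal is correct and coincides with the paper's intended argument: the paper proves Proposition \ref{skewfactorD} by declaring it is ``proved exactly as'' Proposition \ref{skewfactorC}, whose proof is precisely the Hecke-product peeling on both sides combined with the bijection between reduced factorizations of $w_{\la/\mu}$ and compatible triples $\mu\subset\nu\subset\la$. Nothing further is needed.
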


\begin{cor}
\label{Dfact}
For the skew element $w_{\la/\mu}\in \wt{W}_\infty$, we have
\[
\DG_{w_{\la/\mu}}(Z;X,Y) = \sum_{\rho,\nu} \DG_{w_{\nu/\mu}}(Z;X)\AG_{w_{\la/\rho}^{-1}}(Y)
\]
summed over all typed $k$-strict partitions $\rho$ and $\nu$ such that
$\mu\subset\rho\subset\la$ and $\mu\subset\nu\subset\la$ are
compatible triples, $w_{\la/\rho}\in S_{\infty}$, and
$w_{\la/\mu}=w_{\la/\rho}\di w_{\nu/\mu}$.
\end{cor}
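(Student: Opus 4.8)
The plan is to derive Corollary \ref{Dfact} from the definition (\ref{dbleD}) of $\DG_w$ together with Proposition \ref{skewfactorD}, following the template of Corollary \ref{Afact} and Corollary \ref{Cfact}. First I would unwind the operator expression in (\ref{dbleD}): since $D(Z)A_1(x_1)\cdots A_{n-1}(x_{n-1})$ is multiplied on the left by $A'_{n-1}(y_{n-1})\cdots A'_1(y_1)$, and the pairing $\langle-,w\rangle$ picks out the coefficient of $\vp_w$, expanding each factor $1+t\vp_i$ and collecting terms shows that
\[
\DG_w(Z;X,Y) = \sum_{u\di v = w} \DG_v(Z;X)\,\AG_{u^{-1}}(Y),
\]
where the sum runs over all Hecke factorizations $u\di v$ of $w$ with $u\in S_\infty$. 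This is the exact type-D analogue of the identity used at the start of the proof of Corollary \ref{Cfact}; the condition $u\in S_\infty$ arises because the $Y$-variables only appear through the $A'_i$ factors, which involve only $\vp_1,\ldots,\vp_{n-1}$, so the left factor of a contributing Hecke factorization must lie in the subgroup $S_\infty\subset\wt{W}_\infty$ generated by $s_1,\ldots,s_{n-1}$, while $\DG_v(Z;X)$ absorbs the remaining $D(Z)$ and $A_i(x_i)$ part.

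Next I would apply Proposition \ref{skewfactorD} to rewrite each Hecke factorization $w_{\la/\mu}=u\di v$ in terms of partitions: it gives typed $k$-strict partitions $\rho$ and $\nu$ with $\mu\subset\rho\subset\la$ and $\mu\subset\nu\subset\la$ compatible triples, $u=w_{\la/\rho}$, and $v=w_{\nu/\mu}$. Substituting $\DG_v(Z;X)=\DG_{w_{\nu/\mu}}(Z;X)$ and $\AG_{u^{-1}}(Y)=\AG_{w_{\la/\rho}^{-1}}(Y)$ converts the sum over Hecke factorizations into the sum over pairs $(\rho,\nu)$ of the stated form. Conversely, given such $\rho$ and $\nu$ with $w_{\la/\mu}=w_{\la/\rho}\di w_{\nu/\mu}$, this is a Hecke factorization of $w_{\la/\mu}$, and so each admissible pair $(\rho,\nu)$ contributes exactly one term; thus the reindexing is a bijection. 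Finally, the side condition $w_{\la/\rho}\in S_\infty$ in the corollary matches the constraint $u\in S_\infty$ in the operator identity, so no spurious terms appear and none are lost.

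The one point that needs a little care — and the mild analogue of the obstacle one meets in type C — is the compatibility bookkeeping: one must check that the conditions ``$\mu\subset\rho\subset\la$ a compatible triple'' and ``$\mu\subset\nu\subset\la$ a compatible triple'' emitted by Proposition \ref{skewfactorD} are exactly the conditions under which $\DG_{w_{\nu/\mu}}(Z;X)$ and $\AG_{w_{\la/\rho}^{-1}}(Y)$ are defined and the Hecke product reconstructs $w_{\la/\mu}$. Since Proposition \ref{skewfactorD} is stated to hold for all skew $w_{\la/\mu}$ and the converse direction is immediate (a Hecke factorization with the indicated left and right factors), this amounts only to a transcription of the indexing set, exactly as in Corollary \ref{Cfact}. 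Hence the corollary follows immediately from the operator identity and Proposition \ref{skewfactorD}, and I would present the proof in essentially two sentences mirroring the proof of Corollary \ref{Cfact}.
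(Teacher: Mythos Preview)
Your proposal is correct and follows exactly the approach the paper intends: the paper states that Corollary~\ref{Dfact} is ``proved exactly as [its] analogue in type C,'' namely by combining the operator identity $\DG_w(Z;X,Y)=\sum_{u\di v=w}\DG_v(Z;X)\AG_{u^{-1}}(Y)$ (summed over Hecke factorizations with $u\in S_\infty$) with Proposition~\ref{skewfactorD}. Your treatment of the bijection between Hecke factorizations and pairs $(\rho,\nu)$, and of the side condition $w_{\la/\rho}\in S_\infty$, matches the paper's proof of Corollary~\ref{Cfact} verbatim.
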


Suppose that $\la$ is a typed $k$-strict partition and $i\in \N_\Box$
is such that $\ell(s_iw_\la)=\ell(w_\la)-1$. Then $s_iw_\la=w_\mu$ for
a typed $k$-strict partition $\mu\subset\la$, so that $\la/\mu$ is a
typed $k'$-horizontal strip consisting of a single box $b$ of
$\la$. We then say that $b$ is a {\em removable box} of $\la$. It
follows that a box $b$ of $\la$ is a removable box of $\la$ if and
only if (i) $\la\ssm b$ is a partition and (ii) if $b$ is a left box
of $\la$, then $b$ is not $(k-1)$-related to two right boxes of $\la$
(including boxes in row zero) which are both bottom boxes in their
column. Notice that the type of the typed $k$-strict partition
$\mu=\la\ssm b$ is determined by the condition
$\type(\la)+\type(\mu)\neq 3$.

Similarly, suppose that $(\la,\mu)$ is a compatible pair of typed
$k$-strict partitions and $i\in \N_\Box$ is such that
$\ell(s_iw_{\la/\mu})=\ell(w_{\la/\mu})-1$. Then
$s_iw_{\la/\mu}=w_{\nu/\mu}$ for a typed $k$-strict partition $\nu$
with $\mu\subset\nu\subset\la$, so that $\la/\nu$ is a typed
$k$-horizontal strip consisting of a single box $b$ of $\la$, which is
not in $\mu$. We then say that $b$ is a {\em removable box} of
$\lambda/\mu$, and we must have $\type(\la)+\type(\nu)\neq 3$. In this
case, $b$ is a removable box of $\la$ which is not in $\mu$ and such that
$(\la\ssm b,\mu)$ is a compatible pair.

\begin{prop}
\label{skewfactD}
Suppose that $u$ is decreasing down to $1$, increasing up from $1$, or
increasing up from $\Box$, and $w_{\la/\mu}=u\di v$ for some skew
element $w_{\la/\mu}$. Then there exist typed $k$-strict partitions
$\rho\subset\nu$ such that $\mu\subset\rho\subset\la$ and
$\mu\subset\nu\subset\la$ are compatible triples, $\nu/\rho$ is a
$k$-rook strip, $u=w_{\la/\rho}$, and $v=w_{\nu/\mu}$.
\end{prop}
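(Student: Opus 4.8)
The plan is to mimic the proof of Proposition \ref{skewfactC}, adapting each step to the type D combinatorics via the type D analogues established earlier in the excerpt. First I would invoke Proposition \ref{skewfactorD} to obtain a typed $k$-strict partition $\nu$ with $\mu\subset\nu\subset\la$ a compatible triple and $v=w_{\nu/\mu}$, so that $w_{\la/\mu}=u\di w_{\nu/\mu}$. The argument then proceeds by induction on $\ell(u)$. For the base case $\ell(u)=1$, $u=s_i$ for some $i\in\N_\Box$: if $s_iw_{\nu/\mu}$ is reduced we take $\rho:=\nu$; otherwise $\nu=\la$ and $s_iw_{\la/\mu}=w_{\rho/\mu}$ where $b:=\la/\rho$ is a removable box of $\la/\mu$, and the one-box skew diagram $\{b\}$ is trivially a $k$-rook strip.

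For the inductive step, write $u=s_iu'$ with $\ell(u)=\ell(u')+1$, where the fixed decreasing/increasing reduced word of $u$ begins with $s_i$. Then $u'\di w_{\nu/\mu}=w_{\zeta/\mu}$ for some typed $k$-strict partition $\zeta$ with $\mu\subset\zeta\subset\la$, and the inductive hypothesis (applied to $u'$, which inherits the decreasing/increasing property) gives $u'=w_{\zeta/\rho}$ with $\mu\subset\rho\subset\zeta$, $\mu\subset\nu\subset\zeta$ compatible triples and $\nu/\rho$ a $k$-rook strip. If $s_iw_\zeta=w_\la$ is reduced, then $(\la,\zeta)$ and hence $(\la,\rho)$ is compatible and $u=s_iw_{\zeta/\rho}=w_{\la/\rho}$, finishing this case. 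Otherwise $\zeta=\la$, so $u'=w_{\la/\rho}$ and $s_iw_\la=w_{\la\ssm b}$ for a removable box $b$ of $\la$ not in $\mu$. Here I would use the hypothesis on the shape of the reduced word of $u$: since $a_r\cdots a_1$ is decreasing down to $1$, increasing up from $1$, or increasing up from $\Box$, the word for $w_{\la/\rho}=u'$ obtained by deleting $s_i$ witnesses that $\la/\rho$ is a typed $z$-strip, $y$-strip, or $\ov z$-strip respectively. Proposition \ref{zdistinct} (together with the characterizations of typed $x$- and $y$-strips recalled before it) then lets me extend $\la/\rho$ to a strip $\la/\wt\rho$ of the same type with $|\la/\wt\rho|=\ell(u)$ containing $\la/\rho$, obtained via Proposition \ref{skewfactorD} from $u=s_i\di u'$. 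As $\ell(s_iw_{\la/\wt\rho})=\ell(w_{\la/\wt\rho})-1$, the box $b$ is removable in $\la/\wt\rho$ but not in $\la/\rho$, forcing $b\in\rho$, $\wt\rho=\rho\ssm b$; the factorization $w_{\la/\mu}=w_{\la/\wt\rho}\di v$ shows $\mu\subset\wt\rho\subset\la$ is a compatible triple, and since $\la\ssm b$ is a typed $k$-strict partition so is $\nu\ssm b$, whence $\nu/\wt\rho=(\nu/\rho)\cup b$ is again a $k$-rook strip.

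The main obstacle is the bookkeeping of the \emph{types} in type D, which has no analogue in type C. At each step where a box is added or removed, the type of the relevant partition is pinned down by the rule $\type(\la)+\type(\mu)\neq 3$, and one must check that the three notions ``decreasing down to $1$,'' ``increasing up from $1$,'' and ``increasing up from $\Box$'' really do correspond, via Proposition \ref{zdistinct} and the preceding characterizations, to the typed $z$-, $y$-, and $\ov z$-strips respectively — including the extra congruence condition on $\epsilon(\mu)$ and $\epsilon(\la)$ in the extremal case. I would handle this by treating the three shapes of reduced word for $u$ in parallel and noting at each juncture that the deletion of a single letter from a word of one of these three shapes again has a word of the same shape, so the strip type is preserved throughout the induction; the type of $\nu\ssm b$ being well-defined then follows from $b$ being removable in $\la$ together with $\mu\subset\nu\subset\la$. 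The statement is asserted in the paper to be ``proved exactly as'' its type C analogue, so beyond this type-tracking the argument is a routine transcription of the proof of Proposition \ref{skewfactC}.
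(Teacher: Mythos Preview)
Your proposal is correct and follows exactly the approach the paper intends: the paper's own proof of Proposition~\ref{skewfactD} consists of the single sentence ``The argument is the same as in the proof of Proposition~\ref{skewfactC},'' and what you have written is precisely that transcription, with the type~C ingredients (Propositions~\ref{skewfactorC} and~\ref{khoriz}) replaced by their type~D analogues (Propositions~\ref{skewfactorD} and~\ref{zdistinct}, together with the preceding characterizations of typed $x$-, $y$-, and $z$-strips). Your attention to the type bookkeeping is the only genuinely new point in type~D, and you have handled it appropriately.
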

\begin{proof}
The argument is the same as in the proof of Proposition \ref{skewfactC}.
\end{proof}

\begin{remark}
In the situation of Proposition \ref{skewfactD}, if $u$ is decreasing
down to $1$ (resp.\ increasing up from $1$ or increasing up from
$\Box$), then $\la/\nu$ and $\la/\rho$ are both typed $z$-strips
(resp.\ typed $y$-strips or typed $\ov{z}$-strips).
\end{remark}

As in the previous section, Proposition \ref{skewfactD} admits a
converse statement.

\begin{prop}
\label{skewDconverse}
Suppose that $\mu\subset\rho\subset\nu\subset\la$ are typed $k$-strict
partitions such that $\mu\subset\rho\subset\la$ and
$\mu\subset\nu\subset\la$ are compatible triples, $\la/\rho$ is a
typed $z$-strip (resp.\ typed $y$-strip or
typed $\ov{z}$-strip), and $\nu/\rho$ is a $k$-rook strip. Then we have a
Hecke factorization $w_{\la/\mu}=w_{\la/\rho}\di w_{\nu/\mu}$.
\end{prop}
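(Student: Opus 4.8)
The plan is to mimic the proof of Proposition~\ref{skewCconverse}, since the statement claims this is proved ``exactly as in type C.'' The key structural input is the same: we have a decreasing (resp.\ increasing) reduced word $a_r\cdots a_1$ (resp.\ $a_1\cdots a_r$) for $w_{\la/\rho}$, with $r=|\la/\rho|$, corresponding to the fact that $\la/\rho$ is a typed $z$-strip (resp.\ typed $y$-strip or typed $\ov z$-strip). Because $(\rho,\mu)$ is a compatible pair, one builds an increasing chain of typed $k$-strict partitions $\rho=\zeta_0\subset\zeta_1\subset\cdots\subset\zeta_r=\la$ by setting $s_{a_i}\cdots s_{a_1}w_{\rho/\mu}=w_{\zeta_i/\mu}$, and one lets $b_i:=\zeta_i\ssm\zeta_{i-1}$. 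Then one defines $\ov\zeta_i:=\zeta_i\cup\nu$ (with $\ov\zeta_0=\nu$) and $w^{(i)}:=s_{a_i}\di\cdots\di s_{a_1}\di w_{\nu/\mu}$, using that $\nu/\rho$ is a $k$-rook strip to guarantee each $\ov\zeta_i$ is again a typed $k$-strict partition (its type being forced by the $k$-rook-strip condition, exactly as $\la\ssm b$ has its type determined in the removable-box discussion above).

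The induction on $i$ proves that $(\ov\zeta_i,\nu)$ is a compatible pair and $w^{(i)}=w_{\ov\zeta_i/\mu}$. For the inductive step one splits into two cases according to where $b_i$ lies. If $b_i\notin\nu$, one argues that $\la/\nu$ is again a typed $k'$-horizontal strip (since $(\la,\nu)$ is compatible and $\la/\rho$ is one), hence by Proposition~\ref{zdistinct} (or its typed $x$-/typed $y$- counterpart quoted from \cite[Prop.~2]{T6}) it is a typed $z$-strip (resp.\ typed $y$- or $\ov z$-strip) in the corresponding case; then $a_r\cdots a_1$ has a subword that is a reduced word for $w_{\la/\nu}$ containing the letter $a_i$, which shows $(\ov\zeta_i,\ov\zeta_{i-1})$ is compatible, and combining with compatibility of $(\ov\zeta_{i-1},\nu)$ gives that $(\ov\zeta_i,\nu)$ is compatible and the product $s_{a_i}w_{\ov\zeta_{i-1}/\mu}$ is reduced and equals $w_{\ov\zeta_i/\mu}$. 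If instead $b_i$ is a box of the $k$-rook strip $\nu/\rho$, then $\ov\zeta_i=\ov\zeta_{i-1}$, and one uses the same horizontal-strip propositions to see that $b_i$ is a removable box of $\ov\zeta_i/\rho$, whence $s_{a_i}\di w_{\ov\zeta_i/\rho}=w_{\ov\zeta_i/\rho}$; then from $w_{\ov\zeta_i/\mu}=w_{\ov\zeta_i/\rho}w_{\rho/\mu}$ one deduces $s_{a_i}\di w_{\ov\zeta_{i-1}/\mu}=w_{\ov\zeta_i/\mu}$. Taking $i=r$ gives $\ov\zeta_r=\la$ and $w_{\la/\rho}\di w_{\nu/\mu}=w^{(r)}=w_{\la/\mu}$.

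The main obstacle, and the only place the type~D argument genuinely differs, is bookkeeping the \emph{type} data through the construction: one must check that $\ov\zeta_i=\zeta_i\cup\nu$ carries a well-defined type compatible with both $\zeta_i$ and $\nu$, and that the ``extremal'' conditions in Propositions~\ref{zdistinct} (and the typed $x$-/$y$-strip characterizations) are preserved when passing from $\la/\rho$ to $\la/\nu$ and from $\ov\zeta_{i-1}$ to $\ov\zeta_i$. This is where the hypothesis that both $(\la,\nu)$ and $(\rho,\mu)$ are compatible pairs (not merely $\mu\subset\rho\subset\nu\subset\la$) is used, just as in the type~C converse. I expect the relevant type-compatibility constraints to follow mechanically from $\type(\la)+\type(\ov\zeta_i)\neq 3$-type conditions and the observation that a $k$-rook strip box $b$ never forces $\la_j=k$, so adjoining $\nu$ does not create a new part equal to $k$ unless one was already present. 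Once these type checks are in place, the three cases of the statement (typed $z$-, typed $y$-, typed $\ov z$-strip) go through identically, using the decreasing word, the increasing-from-$1$ word, or the increasing-from-$\Box$ word respectively, together with the matching clause of Proposition~\ref{zdistinct} and \cite[Prop.~2]{T6}. Finally, the argument when $\la/\rho$ is a typed $z$-strip versus the increasing cases differs only in the direction one reads the reduced word, exactly as in the proof of Proposition~\ref{skewCconverse}.
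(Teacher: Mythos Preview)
Your proposal is correct and follows the paper's approach; indeed the paper's proof consists of the single sentence ``The argument is the same as in the proof of Proposition~\ref{skewCconverse}.'' One small remark: the type bookkeeping you worry about is less delicate than you suggest, because the argument of Proposition~\ref{skewCconverse} is carried out through the Weyl group elements $w_{\zeta_i/\mu}$ and $w_{\ov\zeta_i/\mu}$, and in type D the type of a typed $k$-strict partition is determined by its associated $k$-Grassmannian element---so the types come along for free. (Your aside that a $k$-rook strip box ``never forces $\la_j=k$'' is not literally true, but it is also not needed: that $\ov\zeta_i=\zeta_i\cup\nu$ has a part equal to $k$ only if $\zeta_i$ or $\nu$ does follows immediately from $(\ov\zeta_i)_r=\max((\zeta_i)_r,\nu_r)$.)
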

\begin{proof}
The argument is the same as in the proof of Proposition \ref{skewCconverse}.
\end{proof}

Let $\bR$ denote the ordered alphabet 
\[
\ov{1}, \ov{1}^\circ < 1,1^\circ < \ov{2}, \ov{2}^\circ < 2,2^\circ
< \cdots . 
\]
The symbols $\ov{1},\ov{1}^\circ,\ov{2},\ov{2}^\circ,\ldots$ are {\em
  barred}, while the symbols $1,1^\circ, 2,2^\circ,\ldots$ are {\em
  unbarred}.

\begin{defn}
\label{svktabD}
A {\em set-valued typed $k'$-tableau} $T$ of shape $\la/\mu$ is a pair
of sequences of typed $k$-strict partitions
\[
\mu = \la^0 \subset \la^1 \subset \cdots \subset \la^{2r} =\la
\quad \mathrm{and} \quad
\mu = \nu^0, \nu^1, \ldots, \nu^{2r} = \la 
\]
with the first one compatible, such that (i)
$\mu\subset\nu^i\subset\la^i$, $(\nu^i,\mu)$ is a compatible pair, and
$\la^i/\nu^i$ is a $k$-rook strip for each $i$, and (ii)
$\la^i/\nu^{i-1}$ is a typed $\ov{z}$-strip if $i$ is odd and a typed
$z$-strip if $i$ is even, for $1\leq i\leq 2r$.  We represent $T$ by a
filling of the boxes in $\la/\mu$ with finite nonempty subsets of
symbols of $\bR$ such that for each $h\in [1,r]$, the boxes in $T$
which contain the entry $\ov{h}$ or $\ov{h}^\circ$ (resp.\ $h$ or
$h^\circ$) form the skew diagram $\la^{2h-1}/\nu^{2h-2}$
(resp.\ $\la^{2h}/\nu^{2h-1}$), and we use $\ov{h}^\circ$
(resp.\ $h^\circ$) if and only if $\type(\la^{2h-1})=2$
(resp.\ $\type(\la^{2h})=2$), for each $h\in [1,r]$.  For any
set-valued typed $k'$-tableau $T$ we let $|T|$ be the total number of
symbols of $\bR$ which appear in $T$, including their multiplicities,
and set $z^T:=\prod_h z_h^{m_h}$, where $m_h$ denotes the number of
times that $\ov{h}$, $h$, $\ov{h}^\circ$, or $h^\circ$ appears in $T$.
\end{defn}

\begin{thm}
\label{FDthm}
Let $w=w_{\la/\mu}$ be the skew element associated to the compatible pair $(\la,\mu)$ of
typed $k$-strict partitions. Then we have
\begin{equation}
\label{FtabeqD}
F^D_w(Z) = \sum_T \beta^{|T|-|\la/\mu|}\, z^T
\end{equation}
where the sum is over all set-valued typed $k'$-tableau of shape $\la/\mu$.
\end{thm}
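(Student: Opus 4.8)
The plan is to mimic the proof of Theorem~\ref{FCthm}, using the type~D analogues of the structural results just established. First I would unwind the definition $F^D_w(Z) = \langle D(Z), w\rangle$ with $D(Z) = D(z_1)D(z_2)\cdots$ into a sum over Hecke factorizations: since each factor $D(z_j)$ expands as $\bigl(\sum \ldots\bigr)$, collecting the contribution to $\varphi_w$ yields
\[
F^D_w(Z) = \sum_{w = \s_{2r}\di\cdots\di\s_1}
\be^{\sum_i \ell(\s_i) - \ell(w)} \prod_{j=1}^r z_j^{\ell(\s_{2j-1})+\ell(\s_{2j})},
\]
where the sum ranges over all Hecke factorizations $\s_{2r}\di\cdots\di\s_1$ of $w$, for varying $r\geq 0$, such that $\s_{2j-1}$ is increasing up from $\Box$ and $\s_{2j}$ is decreasing down to $1$ for each $j\in[1,r]$. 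This is the exact analogue of equation~(\ref{Ftabeq2}); the only change is that the ``increasing'' factors now run up from $\Box$ rather than from $0$, reflecting the shape of $D(t)$.

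Next I would invoke Propositions~\ref{skewfactD} and~\ref{skewDconverse} to convert each such Hecke factorization of $w = w_{\la/\mu}$ into a pair of sequences of typed $k$-strict partitions
\[
\mu = \la^0 \subset \la^1 \subset \cdots \subset \la^{2r} = \la
\quad\text{and}\quad
\mu = \nu^0, \nu^1, \ldots, \nu^{2r} = \la
\]
satisfying conditions~(i) and~(ii) of Definition~\ref{svktabD}, with $\s_i = w_{\la^i/\nu^{i-1}}$ for each $i\in[1,2r]$. Here Proposition~\ref{skewfactD} applied to the left-to-right partial products produces the $k$-rook strip partitions $\nu^i$ and verifies that $\la^i/\nu^{i-1}$ is a typed $\ov{z}$-strip (odd $i$) or typed $z$-strip (even $i$) --- using that a factor increasing up from $\Box$ gives a typed $\ov{z}$-strip and one decreasing down to $1$ gives a typed $z$-strip --- while Proposition~\ref{skewDconverse} (together with the Remark following Proposition~\ref{skewfactD}) gives the reverse direction, so the correspondence is a bijection. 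This step is where the case distinctions between $\Box$ and positive simple reflections enter; I expect the main obstacle to be checking carefully that the ``increasing up from $\Box$'' factors match the typed $\ov{z}$-strip condition of Proposition~\ref{zdistinct}, including the parity/type conditions on $\epsilon(\la)$ and $\epsilon(\mu)$ for extremal strips --- but all of this is subsumed in the cited propositions, which were ``proved exactly as their analogues in type~C.''

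Finally, given the bijection, I would read off from Definition~\ref{svktabD} that the associated set-valued typed $k'$-tableau $T$ satisfies
\[
|T| = \sum_{i=1}^{2r} |\la^i/\nu^{i-1}| = \sum_{i=1}^{2r} \ell(\s_i)
\quad\text{and}\quad
z^T = \prod_{j=1}^r z_j^{\ell(\s_{2j-1})+\ell(\s_{2j})},
\]
where the identity $|\la^i/\nu^{i-1}| = \ell(\s_i)$ uses $\ell(w_{\la/\nu}) = |\la/\nu|$ for compatible pairs, and $\ell(w) = |\la/\mu|$ holds since $(\la,\mu)$ is compatible. Substituting these into the Hecke-factorization expansion of $F^D_w(Z)$ above, and noting that the sum there is now precisely a sum over all set-valued typed $k'$-tableaux $T$ of shape $\la/\mu$, yields equation~(\ref{FtabeqD}). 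The placement of the optional superscript $\circ$ on symbols (governed by $\type(\la^{2h-1})=2$ or $\type(\la^{2h})=2$) is purely bookkeeping and does not affect the degree counts $|T|$ or $z^T$, so it causes no difficulty.
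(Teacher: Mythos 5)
Your proposal is correct and follows essentially the same route as the paper's proof: expand $F^D_w(Z)=\langle D(Z),w\rangle$ into Hecke factorizations with odd factors increasing up from $\Box$ and even factors decreasing down to $1$, use Propositions \ref{skewfactD} and \ref{skewDconverse} to biject these with the pairs of sequences in Definition \ref{svktabD}, and match $|T|$, $z^T$, and $\ell(w)=|\la/\mu|$ to obtain (\ref{FtabeqD}). No substantive differences from the argument given in the paper.
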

\begin{proof}
It is easy to see from the definition of $F^D_w$ that
\begin{equation}
\label{FtabeqD2}
F^D_w(Z) = \sum_{w=\s_{2r} \di\cdots\di \s_1}
\be^{\sum_i \ell(\s_i)-\ell(w)}\prod_j z_j^{\ell(\s_{2j-1})+\ell(\s_{2j})}
\end{equation}
where the inner sum is over all Hecke factorizations $\s_{2r}\di \cdots
\di \s_1$ of $w$, for varying $r$, such that $\s_{2j-1}$ is increasing
up from $\Box$ and $\s_{2j}$ is decreasing down to $1$ for all $j\in
[1,r]$. Propositions \ref{skewfactD} and \ref{skewDconverse} imply
that any such factorization $w=\s_{2r}\di \cdots \di\s_1$ corresponds to a
pair of sequences of typed $k$-strict partitions
\[
\mu = \la^0 \subset \la^1 \subset \cdots \subset \la^{2r} =\la
\quad \mathrm{and} \quad
\mu = \nu^0, \nu^1, \cdots, \nu^{2r} = \la 
\]
as in Definition \ref{svktabD}, with $\s_i= w_{\la^i/\nu^{i-1}}$ for each $i\in [1,2r]$.
Furthermore, if $T$ is the associated set-valued typed $k'$-tableau, then
\[
|T|= \sum_{i=1}^{2r}|\la^i/\nu^{i-1}| = \sum_{i=1}^{2r}\ell(\s_i)
\quad \text{and} \quad
z^T=\prod_{j=1}^r z_j^{\ell(\s_{2j-1})+\ell(\s_{2j})}
\]
while $\ell(w)=|\la/\mu|$. Therefore, equation (\ref{FtabeqD}) follows directly
from (\ref{FtabeqD2}).
\end{proof}

\begin{example}
\label{exDtab}
Given a compatible pair $(\la,\mu)$ of typed $k$-strict partitions, a
{\em barred typed $k'$-tableau}
$T$ of shape $\la/\mu$ is a sequence of typed $k$-strict partitions
\[
\mu = \la^0 \subset \la^1 \subset \cdots \subset \la^{2r} =\la
\]
such that $\la^i/\la^{i-1}$ is a typed $\ov{z}$-strip if $i$
is odd and a typed $z$-strip if $i$ is even, for $1\leq
i\leq 2r$.  We represent $T$ by a filling of the boxes in $\la/\mu$
with symbols in $\bR$ such that for each $h\in [1,r]$, the
boxes in $T$ with entry $\ov{h}$ or $\ov{h}^\circ$ (resp.\ $h$ or
$h^\circ$) form the skew diagram $\la^{2h-1}/\la^{2h-2}$
(resp.\ $\la^{2h}/\la^{2h-1}$), and we use $\ov{h}^\circ$
(resp.\ $h^\circ$) if and only if $\type(\la^{2h-1})=2$
(resp.\ $\type(\la^{2h})=2$), for each $h\in [1,r]$.

If we set $\beta:=0$, then the $K$-theoretic Stanley function
$F^D_w(Z)$ specializes to the type D Stanley function $E_w(Z)$,
introduced and studied in \cite{BH, La}. If $w:=w_{\la/\mu}$ then
Theorem \ref{FDthm} gives
\begin{equation}
\label{FeqD}
E_w(Z) = \sum_T z^T
\end{equation}
summed over all barred typed $k'$-tableaux $T$ of shape $\la/\mu$. The barred
typed $k'$-tableaux and equation (\ref{FeqD}) refine the typed $k'$-tableaux of
\cite[Def.\ 3]{T3} and \cite[Eqn.\ (11)]{T6} in a manner directly analogous to
Example \ref{exCtab}.

For instance, let $k:=1$, $\la:=(3,1)$ with $\type(\la):=1$ so that
$w_\la=2\,\ov{3}\,\ov{1}$, and $\bR_{12}$ denote the alphabet
$\{\ov{1},\ov{1}^\circ<1,1^\circ<\ov{2},\ov{2}^\circ<2,2^\circ\}$.
There are four barred typed $1'$-tableaux of shape $\la$ with entries
in $\bR_{12}$:
\begin{gather*}
\begin{array}{l} \ov{1}^\circ\,1\,1 \\ \ov{2} \end{array}  \ \ \ \
\begin{array}{l} \ov{1}^\circ\,1\,\ov{2} \\ \ov{2} \end{array}  \ \ \ \
\begin{array}{l} 1\,\ov{2}\,2 \\ 1 \end{array}  \ \ \ \
\begin{array}{l} 1\,\ov{2}\,2 \\ \ov{2} \end{array}
\end{gather*}
corresponding to the reduced factorizations $s_\Box \cdot s_2 s_1 \cdot s_\Box$,
$s_\Box s_2 \cdot s_1 \cdot s_\Box$, $s_2\cdot s_\Box \cdot s_2s_1 \cdot 1$, and $s_2\cdot
s_\Box s_2\cdot s_1\cdot 1$
of $w_\la$, respectively.
Moreover, let $\la^\circ:=(3,1)$ with
$\type(\la^\circ):=2$ so that $w_{\la^\circ}=\ov{2}\,\ov{3}\,1$. Then we
find the following four barred typed $1'$-tableaux of shape
$\la^\circ$ with entries in $\bR_{12}$:
\begin{gather*}
\begin{array}{l} \ov{1}^\circ\,1\,1 \\ 2^\circ \end{array}  \ \ \ \
\begin{array}{l} \ov{1}^\circ\,1\,\ov{2} \\ 2^\circ \end{array}  \ \ \ \
\begin{array}{l} \ov{1}^\circ\,2^\circ\,2^\circ \\ 1^\circ \end{array}  \ \ \ \
\begin{array}{l} \ov{1}^\circ\,2^\circ\,2^\circ \\ \ov{2}^\circ \end{array}
\end{gather*}
corresponding to the reduced factorizations $s_1 \cdot 1 \cdot s_2
s_1\cdot s_\Box$, $s_1 \cdot s_2 \cdot s_1 \cdot s_\Box$, $s_2s_1
\cdot 1 \cdot s_2\cdot s_\Box$, and $s_2s_1\cdot s_2\cdot 1 \cdot
s_\Box$ of $w_{\la^\circ}$, respectively. Setting $Z_2:=(z_1,z_2)$, we deduce that
\[
E_{w_\la}(Z_2) = E_{w_{\la^\circ}}(Z_2) = z_1^3z_2+2z_1^2z_2^2+z_1z_2^3.
\]
The reader may compare this with \cite[Example 2]{T3}.
\end{example}

Let {\bf R} denote the ordered alphabet 
\[
(n-1)'<\cdots 2'<1'< \ov{1}, \ov{1}^\circ < 1,1^\circ < \ov{2},\ov{2}^\circ <
2,2^\circ < \cdots <1''<2''<\cdots<(n-1)'' 
\]
which extends the alphabet $\bR$.

\begin{defn}
\label{Dtrit}
Consider a pair $U$ of sequences of typed $k$-strict partitions
\[
\mu = \la^0 \subset \la^1 \subset \cdots \subset \la^{2n+2r-2} =\la
\quad \mathrm{and} \quad
\mu = \nu^0, \nu^1, \ldots, \nu^{2n+2r-2} = \la 
\]
with the first one compatible, such that (i)
$\mu\subset\nu^i\subset\la^i$, $(\nu^i,\mu)$ is a compatible pair, and
$\la^i/\nu^i$ is a $k$-rook strip for each $i$, and (ii)
$\la^i/\nu^{i-1}$ is a typed $x$-strip for each $i\leq n-1$
(non-extremal if $i\leq n-2$), a typed $y$-strip for each $i\geq n+2r$
(non-extremal if $i\geq n+2r+1$), and a typed $\ov{z}$-strip
(resp.\ typed $z$-strip) if $i = n-1+j$ for some odd (resp.\ even)
$j\in [1,2r]$. We represent $U$ by a filling of the boxes in $\la/\mu$
by including the entry $(n-i)'$ in each box of $\la^i/\nu^{i-1}$ for
$1\leq i\leq n-1$, the entry $(i-n-2r+1)''$ in each box of
$\la^i/\nu^{i-1}$ for $n+2r\leq i\leq 2n+2r-2$, and the entry $\ov{j}$
or $\ov{j}^\circ$ (resp.\ $j$ or $j^\circ$) in each box of
$\la^{n-1+j}/\nu^{n-2+j}$ for all odd (resp.\ even) integers $j\in
[1,2r]$, so that the circled entries $j^\circ$ and $\ov{j}^\circ$ are
used if and only if the ambient partition has type 2, as in Definition
\ref{svktabD}.  We say that $U$ is a {\em set-valued typed
  $k'$-tritableau} of shape $\la/\mu$ if for $1\leq i \leq
\ell_k(\mu)$ (resp.\ $1\leq i \leq \ell_k(\la)$) and $1\leq j \leq k$,
the entries of $U$ in row $i$ are $\geq (\mu_i-k+1)'$ (resp.\ $\leq
(\la_i-k)''$) and the entries in column $k+1-j$ lie in the interval
$[|w_\mu(j)|', |w_\la(j)-1|'']$.  We let $|U|$ be the total number of
symbols of {\bf R} which appear in $U$, including their
multiplicities, and define
\[
(xyz)^U:= \prod_ix_i^{n'_i}\prod_i y_i^{n''_i}\prod_j z_j^{n_j} 
\]
where $n'_i$ and $n''_i$ denote the number of times that $i'$ and
$i''$ appear in $U$, respectively, for each $i\in [1,n-1]$, and $n_j$
denotes the number of times that $j$, $j^\circ$, $\ov{j}$, or $\ov{j}^\circ$
appears in $U$, for each $j \in \N_\Box$.
\end{defn}

\begin{thm}
\label{Dskew} For the skew element $w:=w_{\la/\mu}$ of $\wt{W}_n$, we have
\begin{equation}
\label{DGteq}
\DG_w(Z;X,Y)=\sum_U \be^{|U|-|\la/\mu|}\,(xyz)^U
\end{equation}
summed over all set-valued typed $k'$-tritableaux $U$ of shape $\la/\mu$.
\end{thm}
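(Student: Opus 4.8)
The plan is to mirror the proof of Theorem \ref{Cskew} verbatim, making only the cosmetic changes forced by the passage from type C to type D. First I would expand $\DG_w(Z;X,Y)$ using the defining formula (\ref{dbleD}): writing out each factor $1+t\vp_i$ in $A'_{n-1}(y_{n-1})\cdots A'_1(y_1)\,D(Z)\,A_1(x_1)\cdots A_{n-1}(x_{n-1})$ and collecting the coefficient of $\vp_w$, one obtains
\[
\DG_w(Z;X,Y) = \sum \be^{\ell(u,v,\sigma,w)}\,
y_{n-1}^{\ell(v_{n-1})}\cdots y_1^{\ell(v_1)}\,
x_1^{\ell(u_1)}\cdots x_{n-1}^{\ell(u_{n-1})}\,
\prod_j z_j^{\ell(\sigma_{2j-1})+\ell(\sigma_{2j})}
\]
where the sum is over all Hecke factorizations
$v_{n-1}\di\cdots\di v_1\di \sigma_{2r}\di\cdots\di\sigma_1\di u_1\di\cdots\di u_{n-1}$
of $w$ (for varying $r\geq 0$) in which $v_p\in S_n$ is increasing up from $p$, $u_p\in S_n$ is decreasing down to $p$, $\sigma_i$ is increasing up from $\Box$ for odd $i$ and decreasing down to $1$ for even $i$, and $\ell(u,v,\sigma,w):=\sum_p\ell(u_p)+\sum_p\ell(v_p)+\sum_i\ell(\sigma_i)-\ell(w)$. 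This is the exact analogue of the first displayed equation in the proof of Theorem \ref{Cskew}; the only difference is that the middle block now comes from $D(t)$, whose left half is increasing up from $\Box$ and whose right half is decreasing down to $1$, exactly as in the shape of $D(t)$.

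Next I would invoke Propositions \ref{skewfactD} and \ref{skewDconverse} to convert each such Hecke factorization into a pair of sequences of typed $k$-strict partitions
\[
\mu = \la^0\subset\la^1\subset\cdots\subset\la^{2n+2r-2}=\la
\qquad\text{and}\qquad
\mu = \nu^0,\nu^1,\ldots,\nu^{2n+2r-2}=\la
\]
satisfying conditions (i) and (ii) of Definition \ref{Dtrit}, with $\sigma_i$, $u_p$, $v_p$ all recovered as the appropriate skew elements $w_{\la^i/\nu^{i-1}}$; here the typed $x$-strips (from the $u_p$ block), typed $y$-strips (from the $v_p$ block), and alternating typed $\ov z$- and $z$-strips (from the $\sigma_i$ block) arise precisely because $u_p$ is decreasing down to $1$, $v_p$ is increasing up from $1$, and the $\sigma_i$ alternate between increasing up from $\Box$ and decreasing down to $1$; the $k$-rook strips $\la^i/\nu^i$ record the non-reduced Hecke multiplications. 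The non-extremality conditions ``non-extremal if $i\leq n-2$'' and ``non-extremal if $i\geq n+2r+1$'' in Definition \ref{Dtrit} come from the fact that only the innermost factors $u_1$ and $v_1$ (adjacent to the $D(Z)$ block) can change $(\ell_k,\type)$ — this is the type D analogue of the condition $\ell_k(\rho)=\ell_k(\la)$ in Corollary \ref{Cfact}, and it is why all but one $x$-strip and all but one $y$-strip must be non-extremal.

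I would then read off the filling $U$ of $\la/\mu$: the entry $(n-i)'$ in each box of $\la^i/\nu^{i-1}$ for $1\leq i\leq n-1$, the entry $(i-n-2r+1)''$ in each box of $\la^i/\nu^{i-1}$ for $n+2r\leq i\leq 2n+2r-2$, and $\ov j$ (or $\ov j^\circ$) resp.\ $j$ (or $j^\circ$) in $\la^{n-1+j}/\nu^{n-2+j}$ for odd resp.\ even $j\in[1,2r]$, with the circled symbols used exactly when the ambient typed partition has type $2$. The required row and column bounds — entries in row $i$ at least $(\mu_i-k+1)'$, at most $(\la_i-k)''$, and entries in column $k+1-j$ inside $[|w_\mu(j)|',|w_\la(j)-1|'']$ — are established exactly as in the proof of \cite[Thm.\ 2]{T6}, using the formula for $\la_i$ in terms of $w_{k+i}$ in Section \ref{typesBC} (type D version) and tracking how each simple reflection $\vp_i$ moves the vertical bar. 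By construction $(xyz)^U$ equals the monomial appearing in the expansion above and $|U|=\sum_p\ell(u_p)+\sum_p\ell(v_p)+\sum_i\ell(\sigma_i)$, so the exponent of $\be$ matches $|U|-|\la/\mu|$. Conversely every set-valued typed $k'$-tritableau $U$ arises this way, by Proposition \ref{skewDconverse} applied to the $\sigma$-block together with the evident reduced factorizations in the $x$- and $y$-blocks, so the sum on the right of (\ref{DGteq}) is exactly the sum in the expansion and the theorem follows. The main obstacle, as in type C, is purely bookkeeping: verifying that the typed-partition bounds in Definition \ref{Dtrit} are equivalent to the reducedness/increasing/decreasing constraints on the Hecke factors, and in particular handling the type-$2$ subtleties (the $\circ$ decorations and the parity conditions in Propositions \ref{zdistinct} and the $x/y$-strip characterization), but this is carried out verbatim as in \cite[Thm.\ 2]{T6} once Propositions \ref{skewfactD} and \ref{skewDconverse} are in hand.
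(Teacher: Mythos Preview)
Your proposal is correct and follows essentially the same approach as the paper's own proof: expand $\DG_w$ via (\ref{dbleD}) as a sum over Hecke factorizations of the stated shape, convert these via Propositions \ref{skewfactD} and \ref{skewDconverse} into the pair of typed $k$-strict partition chains of Definition \ref{Dtrit}, and check the entry bounds as in \cite{T6}. One small slip: the bounds argument should be referred to \cite[Thm.\ 3]{T6} (the type D theorem) rather than \cite[Thm.\ 2]{T6}, since the type D bounds involve $|w_\mu(j)|'$ and $|w_\la(j)-1|''$ with absolute values and the row bounds are shifted by one relative to type C; otherwise your outline matches the paper's proof.
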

\begin{proof}
We deduce from formula (\ref{dbleD}) that $\DG_w(Z;X,Y)$ is equal to 
\[
\sum \be^{\ell(u,v,\sigma,w)} y_{n-1}^{\ell(v_{n-1})}\cdots y_1^{\ell(v_1)}
x_1^{\ell(u_1)}\cdots x_{n-1}^{\ell(u_{n-1})}\prod_j z_j^{\ell(\sigma_{2j-1})+\ell(\s_{2j})}
\]
where the sum is over all Hecke factorizations
\[
v_{n-1}\di\cdots
\di v_1\di \sigma_{2r}\di\cdots\di\sigma_1 \di u_1\di\cdots \di u_{n-1}
\]
of $w$, for varying $r\geq 0$, such that $v_p\in S_n$ is increasing up
from $p$ and $u_p\in S_n$ is decreasing down to $p$ for each $p$,
$\sigma_i$ is increasing up from $\Box$ (resp.\ decreasing down to
$1$) for all odd $i$ (resp.\ all even $i$), and
$\ell(u,v,\sigma,w):=\sum_p\ell(u_p)+\sum_p\ell(v_p)
+\sum_i\ell(\sigma_i)- \ell(w)$. Propositions \ref{skewfactD} and
\ref{skewDconverse} imply that such factorizations correspond to pairs
of sequences of typed $k$-strict partitions
\[
\mu = \la^0 \subset \la^1 \subset \cdots \subset \la^{2n+2r-2} =\la
\quad \mathrm{and} \quad
\mu = \nu^0, \nu^1, \ldots, \nu^{2n+2r-2} = \la
\]
and an associated filling $U$ of the the boxes in $\la/\mu$ as in
Definition \ref{Dtrit}. The required bounds on the entries of $U$ are
established as in the proof of \cite[Thm.\ 3]{T6}, showing that $U$ is
a set-valued typed $k'$-tritableau of shape $\la/\mu$ such that
\[
(xyz)^U=y_{n-1}^{\ell(v_{n-1})}\cdots y_1^{\ell(v_1)}
x_1^{\ell(u_1)}\cdots x_{n-1}^{\ell(u_{n-1})}\prod_j z_j^{\ell(\sigma_{2j-1})+\ell(\s_{2j})}.
\]
Conversely, the typed set-valued $k'$-tritableaux of shape $\la/\mu$
correspond to Hecke factorizations of $w$ of the required form, and
equation (\ref{DGteq}) follows.
\end{proof}

\begin{example}
  Let $k:=1$ and $\la:=(4,2)$ so that $w_\la=1\,\ov{4}\,\ov{2}\,3
  =s_1s_3s_2s_\Box s_2s_1$, and consider the alphabet ${\bf
    R}_{ex}:=\{1'<\ov{1},\ov{1}^\circ<1, 1^\circ<\ov{2},\ov{2}^\circ<
  2, 2^\circ <1''<2''\}$.  The set-valued typed $1'$-tritableau
\[
\begin{array}{cccc} 1' & \{\ov{1},\ov{2}\} & 2 & \{2,1'',2''\} \\
\{1',\ov{1}\} & 1'' && \end{array}
\quad\text{and}\quad
\begin{array}{cccc} 1' & \{\ov{1},1\} & 1 & \{1,\ov{2},2\} \\
 \ov{2} & \{2,1''\} && \end{array}
\]
of shape $\la$ with entries in ${\bf R}_{ex}$ correspond to the Hecke
factorizations
\[
s_3\di (s_1s_3)\di (s_3s_2) \di s_\Box \di 1 \di
(s_\Box s_2) \di (s_2s_1) \quad \mathrm{and} \quad
s_1 \di (s_3s_1)\di(s_\Box s_3)\di (s_3s_2s_1) \di s_\Box \di s_1
\]
of $w_\la$ and map to the monomials
$x^2_1z_1^2z_2^3y_1^2y_2$ and $x_1z_1^4z_2^4y_1$, respectively.
\end{example}

\begin{example}
Using Theorem \ref{Dskew}, the type D Grothendieck polynomials for the
elements $s_r$ of length one in $\wt{W}_\infty$ are computed as
follows. If $r\geq 2$, then we have
\[
\DG_{s_r}(Z;X,Y) = \CG_{s_r}(Z;X,Y) = \BG_{s_r}(Z;X,Y).
\]
For $r=1$, we have 
\[
\DG_{s_1}(Z;X,Y) = \sum_{H'} \beta^{|H'|-1}x_1^{e_1}y_1^{e_2}
\prod_j z_j^{f_j}
\]
summed over all sequences $H'=(e_1,e_2,f_1,f_2,f_3,\ldots)$ with
finite support such that $e_i,f_j\in \{0,1\}$ for each $i,j$ and
$|H'|:=e_1+ e_2+\sum_j f_j>0$. Finally, for $r=\Box$ we have
\[
\DG_{s_\Box}(Z;X,Y) = \sum_{D'} \beta^{|D'|-1}\prod_j z_j^{f_j}
\]
summed over all sequences $D'=(f_1,f_2,\ldots)$ with finite support such that
$f_j\in \{0,1\}$ for each $j$ and $|D'|:=\sum_j f_j>0$.
\end{example}

\begin{example}
Let $w\in W_\infty$ (respectively $w\in \wt{W}_\infty$) and define the
{\em $K$-theoretic double mixed Stanley functions} $J^B_w$, $J^C_w$,
and (respectively) $I^D_w$ by the equations
\begin{align*}
J^B_w(Z;X,Y) &:=\left\langle A'(Y)B(Z)A(X), w\right\rangle, \\
J^C_w(Z;X,Y) &:=\left\langle A'(Y)C(Z)A(X), w\right\rangle, \ \ \mathrm{and} \\
I^D_w(Z;X,Y) &:=\left\langle A'(Y)D(Z)A(X), w\right\rangle.
\end{align*}
These functions are analogues in connective $K$-theory of the double
mixed Stanley functions $J_w(Z;X,Y)$ and $I_w(Z;X,Y)$ studied in
\cite[Example 3]{T2} and \cite[Sections 3.5 and 4.6]{T4}. Now suppose
that $w:=w_{\la/\mu}$ is a skew element of $W_\infty$ or
$\wt{W}_\infty$. Extend the alphabets {\bf Q} and {\bf R} to include
all primed and double primed positive integers, and modify Definition
\ref{Ctrit} and Definition \ref{Dtrit} by omitting the bounds on the
entries of the tritableaux found in both, and the non-extremal
condition in the latter. Then the right hand sides of equations
(\ref{CGteq}) and (\ref{DGteq}) give tableau formulas for the type C
and type D functions $J^C_w(Z;X,Y)$ and $I^D_w(Z;X,Y)$, respectively.
\end{example}

\end{document}